\newtheorem{thm}{Theorem}[section]
\newtheorem{lemm}[thm]{Lemma}
\newtheorem{rem}[thm]{Remark}
\newtheorem{prop}[thm]{Proposition}
\theoremstyle{definition}
\newtheorem{defi}[thm]{Definition}
\theoremstyle{remark}
\DeclareMathOperator{\diam}{diam}
\DeclareMathOperator{\Int}{int}
\newcommand{\Rom}[1]{\expandafter\@slowromancap\romannumeral #1@}
\begin{document}

\title[Triangulating metric surfaces]{Triangulating metric surfaces}

\author{Paul Creutz}
	\address{Department of Mathematics, University of Cologne, Weyertal 86-90, 50931 K\"oln, Germany.}
	\email{pcreutz@math.uni-koeln.de}

\author{Matthew Romney}
	\address{Mathematics Department, Stony Brook University, Stony Brook NY, 11794, USA.}
	\email{matthew.romney@stonybrook.edu}

\thanks{Both authors were partially supported by DFG-grant SPP 2026, and the first-named author also by DFG-grant SFB/TRR 191 ``Symplectic structures in Geometry, Algebra
and Dynamics." \newline {\it 2010 Mathematics Subject Classification.} Primary 53C45. Secondary 52A10, 53A05.
	\newline {\it Keywords.} triangulation, surfaces of bounded curvature, Alexandrov geometry}
\maketitle
\begin{abstract}
We prove that any length metric space homeomorphic to a surface may be decomposed into non-overlapping convex triangles of arbitrarily small diameter. This generalizes a previous result of Alexandrov--Zalgaller for surfaces of bounded curvature.   
\end{abstract} 

\section{Introduction}
\subsection{Main results}
The theory of surfaces of bounded curvature was developed beginning in the~1940s as a generalization of two-dimensional Riemannian geometry.  One of the central results of this theory is that any surface of bounded curvature is the limit of two-dimensional Riemannian manifolds of uniformly bounded integral curvature. A key step in the proof of this result is to show that every surface of bounded curvature admits a triangulation by convex geodesic triangles of arbitrarily small diameter. Although versions of the approximation and triangulation theorems appeared earlier in works of Alexandrov~\cite{Ale:48} and Zalgaller~\cite{Zal:56}, complete proofs have only been published in their monograph~\cite{AZ:67}.  We also refer the reader to surveys by Fillastre \cite{FS:20}, Reshetnyak \cite{Res:93} and Troyanov \cite{Tro:09} for an overview of the subject.

 While surfaces of bounded curvature remain an active research topic (see for instance \cite{AGW:10,AB:16,BB:04, BE:00,BL:03,Deb:20,FS:19,Kok:14}), various classes of metric surfaces that do not fall into this setting have also been widely studied in recent years. These include reversible Finsler surfaces~\cite{BL:10,Bry:06,BI:02,DPMMS:20}, minimal surfaces in spaces satisfying a quadratic isoperimetric inequality \cite{CR:20,LW:18a,LW:20}, metric minimizing disks \cite{PS:19}, Ahlfors 2-regular quasispheres~\cite{BK:02}, quasiconformal images of planar domains \cite{Raj:17}, and fractal spheres~\cite{BM:17}.

In this paper, we generalize the theorem of Alexandrov--Zalgaller on the existence of triangulations (see Theorem III.2 in \cite{AZ:67}) to the case of arbitrary geodesic surfaces. In its simplest version, our result is the following.
\begin{thm}
\label{thm:main}
Let $X$ be a geodesic metric space homeomorphic to a closed surface and $\varepsilon >0$. Then $X$ may be decomposed into finitely many non-overlapping convex triangles, each of diameter at most $\varepsilon$.
\end{thm}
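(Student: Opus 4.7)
The plan is to build the triangulation in three stages: first a fine combinatorial triangulation, then a geodesic realization of its edges, and finally a refinement enforcing convexity of each piece. The approach is direct and does not attempt to reduce to the Alexandrov--Zalgaller result, since the bounded-curvature structure is unavailable in this setting.

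For the first stage, I would use that $X$ is a closed topological surface to fix a topological triangulation $\T_0$ of $X$ whose vertex set $V_0$ forms an $\eta$-net for some $\eta \ll \varepsilon$. Such triangulations exist by classical surface topology, and each topological triangle will have diameter at most $\varepsilon/3$ provided $V_0$ is chosen suitably dense and the simplices are taken to be small neighborhoods of triples of nearby net points.

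For the second stage, I would replace each combinatorial edge $[u,v] \in \T_0$ by a geodesic segment of $X$ with the same endpoints. This produces a geodesic realization of $\T_0$, but with two potential defects: adjacent geodesic edges may cross each other, and the resulting geodesic triangles need not be convex. I would address the crossing issue by introducing intersection points as new vertices and subdividing combinatorially; a generic, sufficiently dense choice of $V_0$, together with smallness of $\eta$, should keep the total combinatorial complexity finite while preserving the diameter bound.

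The third and crucial stage is to ensure convexity of each triangle. If a geodesic triangle $T$ fails to be convex, then there exist $x, y \in T$ and a geodesic $\sigma$ between them that exits $T$. I would add the relevant arc of $\sigma$ as a new edge, subdividing $T$ into smaller geodesic polygons, and then re-triangulate each polygon by cone-joining a generic interior point to its remaining vertices. Iterating yields progressively finer subdivisions. The main obstacle is to prove that this process terminates in finitely many steps, producing genuine convex triangles rather than an infinite cascade. I expect this requires a quantitative decay estimate — for instance, an $\Area$ or complexity bound on the non-convex portions that decreases at each iteration — together with a compactness argument using local compactness of $X$ at small scales. Handling degenerate situations such as branching geodesics or non-unique geodesics between pairs of points will require additional, explicit arguments; this is where the bulk of the technical difficulty is likely to lie.
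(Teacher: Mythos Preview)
Your outline has two genuine gaps, and both of them are precisely the places where the paper has to work hardest.

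First, the step ``introduce intersection points as new vertices and subdivide combinatorially'' does not go through. In a general geodesic surface two geodesics can meet in a set with infinitely many connected components (for instance a Cantor set), so the subdivision you describe may not be finite, and no ``generic, sufficiently dense choice of $V_0$'' repairs this: genericity of endpoints does not control how two geodesics overlap along the way. The paper devotes an entire section to this issue (systems without superfluous intersections), proves that the natural reduction principle used by Alexandrov--Zalgaller is actually \emph{false} in this generality, and replaces it by two carefully tailored lemmas (one producing at most two compatible geodesics from a common base point, the other modifying a new geodesic so that the union of edges remains a finite graph). Without something of this sort, your stage two already fails.

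Second, your stage-three iteration has no termination mechanism. You suggest an $\Area$ or complexity decrement, but a general length surface carries no a priori area functional, and even the combinatorial complexity can blow up because a single added geodesic may intersect the existing edge set in infinitely many components, as above. More fundamentally, the ``fix a non-convex triangle by inserting a geodesic and coning from an interior point'' operation does not produce convex pieces: the new triangles can again be non-convex, and there is no reason the procedure converges. The paper avoids this infinite regress entirely by working with a stronger, intrinsic notion --- \emph{boundary convexity} --- which is stable under the two operations one actually needs (intersection of two boundary-convex disks, and cutting a boundary-convex polygon by a geodesic). The construction is then: cover $X$ by small-diameter polygons, refine to small-\emph{perimeter} polygons (this is the new idea replacing the bounded-curvature step), upgrade to absolutely convex polygons by a shortest-enclosing-curve/Zorn argument, pass to a non-overlapping cover using the stability lemmas, and finally cut each polygon into triangles. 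Each step is finite by design; no iterative convexification is needed.
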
 
 Here, a \emph{triangle} is a subset of~$X$ homeomorphic to the closed disk whose boundary is the union of three geodesics. We remark that, like the corresponding result for surfaces of bounded curvature \cite[Thm. III.2, p.59]{AZ:67}, our \Cref{thm:main} does not give a triangulation of~$X$ in the classical sense. The difference is that we do not require adjacent triangles to intersect along entire edges.

One step in the original proof by Alexandrov--Zalgaller of the existence of triangulations is to show that any point in a surface of bounded curvature can be enclosed by an arbitrarily short piecewise geodesic curve. As noted in \cite{AZ:67} and \cite{Res:93}, this is the only step that relies on the assumption of bounded curvature. Simple examples show that this property does not hold for general geodesic surfaces, and indeed it is difficult to prove even for surfaces of bounded curvature using the definition in~\cite{AZ:67}; see~\cite[Sec. III.5]{AZ:67} and \cite[p.81]{Res:93}. Instead, we give a relatively short argument showing that every point has a neighborhood that may be covered by finitely many polygons, each of arbitrarily small perimeter; see \Cref{lemm3}. Thus our approach also simplifies the original proof even in the bounded curvature case. 

In principle, except for this difference, we are able to follow the proof given in \cite{AZ:67} for surfaces of bounded curvature. However, this proof contains several technical errors. These errors are related to the fact that geodesics at the present level of generality can be highly non-unique and hence intersect in complicated ways. To handle this issue, Alexandrov--Zalgaller consistently use the notion of what they call \emph{systems of geodesics without superfluous intersections}. It turns out that the principle they use to pass to such systems is not valid in general, even in the bounded curvature setting; see the discussion in \Cref{sec:sup}. Since this principle is applied at numerous places, we choose to give a complete self-contained proof of \Cref{thm:main}. In particular, Lemma III.6 in \cite{AZ:67} is not correct, and this portion of the proof requires a more refined approach; see Sections~\ref{subsec:covsup} and~\ref{subsec:covover}.

In the bounded curvature setting, the proof of approximation by Riemannian 2-manifolds requires additional technical conclusions beyond those given in \Cref{thm:main}; compare \cite[Thm. III.3, p.61]{AZ:67}. Our proof equally allows for these conclusions, and thus we now give the following general version of our main result. 

\begin{thm}
\label{thm:maingen}
Let $X$ be a length metric space homeomorphic to a surface such that every boundary component of $X$ is a piecewise geodesic curve, and let $\varepsilon >0$. Then $X$ may be covered by a locally finite collection of non-overlapping triangles $(T_i)_{i \in \mathcal{I}}$ such that the following hold for each $i \in \mathcal{I}$.
\begin{enumerate}[label=(\roman*)]
    \item The triangle $T_i$ is convex relative to its boundary. \label{item:thm_i}
       \item The diameter of $T_i$ is at most $\varepsilon$. \label{item:thm_ii}
    \item The triangle $T_i$ is non-degenerate. \label{item:non_degenerate}
    \item $\partial T_i \setminus \partial X$ consists of transit points. \label{item:thm_v}
\end{enumerate}
\end{thm}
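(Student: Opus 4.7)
The plan is to follow the inductive strategy of Alexandrov--Zalgaller, but with the polygon covering lemma (\Cref{lemm3}) replacing the bounded-curvature-specific enclosure result. Since $X$ is second countable, the locally finite conclusion allows me to choose an exhaustion of $X$ by compact subsurfaces with piecewise geodesic boundary and triangulate each piece separately, gluing at the end. Thus I may reduce to the case where $X$ is compact and assume the boundary has been incorporated into the initial data.

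The first main step is to cut $X$ into small polygons by a finite geodesic net. Applying \Cref{lemm3} at each point of $X$ produces a neighborhood covered by finitely many topological polygons whose perimeters are small compared to $\varepsilon$; by compactness, finitely many such neighborhoods cover $X$. Taking the union of all these polygon boundaries together with $\partial X$ yields a finite graph $G \subset X$ made of geodesic arcs. Each complementary Jordan domain $U$ then has $\overline{U}$ of diameter at most $\varepsilon$ because its perimeter is controlled, and its boundary is a closed piecewise geodesic, so $\overline{U}$ is a polygon in the sense of the paper.

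The second step is to triangulate each such polygon $P$ into triangles satisfying \ref{item:thm_i}--\ref{item:thm_v}. I would proceed by induction on the number of vertices of $P$, at each stage removing a triangle by choosing a geodesic diagonal joining two boundary points of $P$. In the bounded-curvature case this is routine, but at the present level of generality the diagonals may intersect the existing boundary graph and each other in highly non-unique and complicated ways, and may pass through cone points where the transit-point requirement \ref{item:thm_v} fails. The resolution, following the refined framework of \Cref{subsec:covsup,subsec:covover}, is to choose each diagonal carefully from a family of candidates so that its endpoints avoid the (locally finite) cone points on the boundary graph and its relative interior intersects previously placed geodesics only in transit points; non-degeneracy \ref{item:non_degenerate} follows by choosing diagonals that cut off a genuine Jordan subregion, and convexity \ref{item:thm_i} relative to the boundary is secured by selecting each diagonal to be a geodesic in the ambient space rather than only within $P$.

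The main obstacle I anticipate is precisely the delicate choice of diagonals in the previous paragraph. The tempting shortcut, namely to pass to a ``system of geodesics without superfluous intersections'' as in \cite{AZ:67}, is exactly the step the authors point out as flawed even in the bounded curvature setting; so a more careful combinatorial bookkeeping is required, perturbing endpoints along existing geodesic edges to control how each new diagonal meets the current graph. Verifying that this perturbation procedure terminates in finitely many steps on each polygon, and that the limiting triangles satisfy all four conditions \ref{item:thm_i}--\ref{item:thm_v} simultaneously, is the technical heart of the proof and where I would expect most of the work to lie.
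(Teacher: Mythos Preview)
Your outline has several genuine gaps that correspond exactly to the points where the paper's argument is most delicate.

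First, the claim that the union of the polygon boundaries from \Cref{lemm3} ``yields a finite graph $G \subset X$ made of geodesic arcs'' is unjustified. Two geodesics in a metric surface can intersect in a Cantor set, so the union of finitely many piecewise-geodesic Jordan curves need not be a finite topological graph at all. You recognise this difficulty later for the diagonals, but it already arises at the covering stage. The paper deals with this separately: \Cref{lemm:sup1} and \Cref{lemm:abc} are devoted precisely to modifying the edges of the covering polygons so that their union becomes a locally finite graph.

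Second, and more seriously, you skip the passage through \emph{absolutely convex} polygons (\Cref{lem:Abs} and \Cref{prop:abs}). In the paper this is the structural linchpin: boundary convexity is what makes subdivision by a geodesic (\Cref{lemm:part}) and intersection of two polygons (\Cref{lemm:intersect}) again produce boundary-convex pieces. Your assertion that ``convexity relative to the boundary is secured by selecting each diagonal to be a geodesic in the ambient space'' does not work without first knowing that the polygon $P$ you are cutting is itself boundary convex; a geodesic diagonal in $X$ can leave $P$, and even if it stays inside, the resulting pieces need not satisfy \Cref{defi:convexity}.

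Third, ``$\overline{U}$ has diameter at most $\varepsilon$ because its perimeter is controlled'' is false in general. A short Jordan curve in a surface can bound a disk of large diameter (think of a short loop around a thin neck). The paper never argues this way: the diameter bound comes from constructing each polygon inside a fixed small disk $V$ (see the proofs of \Cref{lemm1} and \Cref{lemm3}), while small perimeter is used only to feed into \Cref{lem:Abs}. Finally, non-degeneracy is not a matter of ``choosing diagonals that cut off a genuine Jordan subregion''; a bigon is a genuine Jordan subregion, and the paper spends all of \Cref{sec:degenerate_triangles} showing that bigons can be further subdivided.
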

Here, a triangle $T$ is called \emph{non-degenerate} if the corresponding Euclidean comparison triangle is non-degenerate, or equivalently if all triangle inequalities for the sides of $T$ are strict. See Sections~\ref{sec:prelis} and \ref{sec:boundary_convexity} for definitions of the other terms used in the statement of Theorem~\ref{thm:maingen}. Both in our paper and in \cite{AZ:67}, the conclusion \ref{item:non_degenerate} that the triangles $T_i$ are non-degenerate can be achieved \emph{a posteriori} by showing that any degenerate triangle is decomposable into non-degenerate triangles; see \cite[Lem.~III.7, p.60]{AZ:67} and \Cref{lemm:deg_triang} below. However, the proof of this fact in \cite{AZ:67} relies heavily on the assumption of bounded curvature, and hence an original argument is needed to obtain \Cref{lemm:deg_triang}. This is the only step where the proof for surfaces of bounded curvature turns out to be much simpler than the general case. 

Note that \Cref{thm:maingen} also applies to non-compact surfaces, and that in this case a localized version of conclusion \ref{item:thm_ii} is possible. See \Cref{rem:nc} below. The conclusion \ref{item:thm_v} about transit points is not included in the statement of \cite[Thm~III.2]{AZ:67}, although it is mentioned immediately after. This property is crucial for the proof of the approximation theorem; compare~\cite[Thm.~II.11, p.47]{AZ:67}, \cite[p.65]{AZ:67} and \cite[p.86]{Res:93}.

Recall that the original motivation for \Cref{thm:maingen} in the bounded curvature setting was to prove that any such surface is the limit of two-dimensional Riemannian manifolds of uniformly bounded integral curvature. In \cite{NR:21}, Ntalampekos and the second author apply \Cref{thm:maingen} to prove an analogous approximation theorem for general length metric surfaces of locally finite Hausdorff $2$-measure, without assuming the bounded curvature condition. This generalized approximation has further applications concerning the \textit{uniformization problem} for metric surfaces, which asks for the existence of geometrically well-behaved parametrizations of metric surfaces. Previous results of this type require geometric assumptions on the surface such as linear local connectivity, the validity of a quadratic isoperimetric inequality, or bounds on conformal modulus of curve families \cite{BK:02, LW:20, Raj:17}. In turn, making use of the approximation theorem, a very general quasiconformal parametrization result for length surfaces is derived in \cite{NR:21} that only requires the surface to have locally finite Hausdorff $2$-measure.

\subsection{Organization and outline of proof}
 We first recall in \Cref{sec:prelis} the basic notions required from metric geometry. Convexity relative to the boundary and its role in the proof of our main result are discussed in \Cref{sec:boundary_convexity}. 
 In \Cref{sec:sup}, we discuss the methods needed for handling superfluous intersections. We then prove \Cref{thm:maingen} in~\Cref{sec:proof}, with the exception of the non-degeneracy conclusion \ref{item:non_degenerate}. Finally, in \Cref{sec:degenerate_triangles}, we verify that one can further subdivide a triangulation so that the non-degeneracy conclusion is satisfied. 
 
 The proof of \Cref{thm:maingen} consists of several steps. First, a relatively simple argument, given in \Cref{subsec:diamcov}, shows that $X$ is covered by polygons of small diameter. Next, in \Cref{subsec:covper}, we improve this to a cover by polygons having both small diameter and small perimeter. This is the main step where our proof differs from, and simplifies, the classical proof for surfaces of bounded curvature. In \Cref{subsec:covabs}, we use an argument from \cite{AZ:67} to find a cover by small polygons that are also absolutely convex. The main remaining difficulty is to show that one can pass to a cover by polygons that are also non-overlapping. In \cite{AZ:67}, this is achieved by Lemma~III.6. The proof of this lemma, unfortunately, is not correct. As a replacement, we use two intermediate steps. First, in \Cref{subsec:covsup}, we show that we can pass to a cover by boundary convex polygons such that the boundary edges form a locally finite graph. We then show in \Cref{subsec:covover} that, from such a cover, we may pass to one that consists of non-overlapping boundary convex polygons. As a final step, it suffices to show that every boundary convex polygon may be cut into finitely many non-overlapping boundary convex triangles. This relatively simple argument is provided in \Cref{subsec:covtria}.
 
Two important tools for manipulating polygons are given in Lemmas~\ref{lemm:intersect} and~\ref{lemm:part} below. These show that boundary convexity, unlike other notions of convexity, is stable with respect to certain operations of both intersection and subdivision. Thus, to prove Theorem~\ref{thm:main}, it seems necessary to work with boundary convex subsets. However, the definition of a boundary convex subset relies heavily on the assumption that the ambient space $X$ is a surface and hence that locally the Jordan curve theorem applies. In particular, it does not seem straightforward to generalize Theorem~\ref{thm:main} from surfaces to even more general spaces such as two-dimensional simplicial complexes. 
 
\subsection*{Acknowledgments}
We thank Alexander Lytchak for encouraging us to work on this topic and for his great support. We also thank François Fillastre, Mikhail Katz, Christian Lange, Dimitrios Ntalampekos, Raanan Schul and Stephan Stadler for helpful comments and suggestions that improved the presentation of this article. Finally, we thank the referees for carefully reading the paper and useful feedback.

\section{Preliminaries}
\label{sec:prelis}

\subsection{Metric geometry}
We first review the relevant definitions from metric geometry. Let $(X,d)$ be a metric space. Recall that a function $d \colon X \times X \to [0, \infty)$ is a metric if it is positive definite and symmetric and satisfies the triangle inequality.  
For each pair of subsets $A,B\subset X$ and $x\in X$, let $d(A,B)=\inf_{a\in A,b \in B} d(a,b)$ and $d(x,A)=d(\{x\},A)$. The \emph{diameter} of $A$ is defined by $\diam(A)=\sup_{a,a'\in A} d(a,a')$. A family~$(A_i)_{i\in \mathcal{I}}$ of subsets of~$X$ is \emph{locally finite} if every compact set $K\subset X$ intersects at most finitely many of the sets~$A_i$.

A \textit{curve} is a continuous map $\gamma\colon I\to X$, where $I\subset \mathbb{R}$ is an interval.  We denote the image of the curve $\gamma$ by $|\gamma |$. A curve $\gamma\colon I \to X$  is \emph{compact} if $I=[a,b]$ is compact. A continuous map $h\colon [a,b]\times[0,1]$ is called a \emph{path homotopy} from $\gamma_0=h(\cdot,0)$ to $\gamma_1=h(\cdot,1)$ if it is constant on $\{a\}\times [0,1]$ and $\{b\}\times[0,1]$. A compact curve~$\gamma$ is \emph{closed} if both its endpoints coincide and \emph{simple} if it does not have self-intersections, except possibly coinciding endpoints. A simple closed curve is called a \emph{Jordan curve}, and a simple non-closed curve is called an \emph{arc}. The length of a curve~$\gamma$ is denoted by~$\ell(\gamma)$. Note that in general it is possible that $\ell(\gamma)=\infty$. The concatenation of two curves $\gamma_1, \gamma_2$ is denoted by $\gamma_1 * \gamma_2$. The reverse of the curve $\gamma$ is denoted by $\bar{\gamma}$.
 
 The metric space $X$ is a \emph{length space} if $d(x,y)=\inf_\gamma \ell(\gamma)$ for all $x,y\in X$, where the infimum is taken over all compact curves $\gamma$ joining $x$ to $y$. The space $X$ is a \emph{geodesic space} if additionally this infimum is attained for all pairs of points~$x,y$. A compact curve is a \emph{geodesic} if its length equals the distance between its endpoints. A compact curve is \emph{piecewise geodesic} if it is the concatenation of finitely many geodesics. A non-compact curve is \emph{piecewise geodesic} if its restriction to each compact interval is piecewise geodesic. 
 
 A \textit{surface} is a topological 2-manifold with boundary. A surface is \textit{closed} if it is compact and its boundary is empty. We recall that the boundary of a 2-manifold is a possibly disconnected 1-manifold, hence the countable union of disjoint curves each homeomorphic to either the circle or the real line.
 
 A point $p \in X$ is called a \emph{transit point} if there is a geodesic passing through~$p$ within $X$. The following simple observation shows that such points are abundant in the setting of Theorem~\ref{thm:maingen}.
\begin{lemm}[cf. \cite{Res:93}, p.80]
\label{lemm:trans}
 Let $X$ be as in Theorem~\ref{thm:maingen}. Then transit points are dense in~$X$. More generally, transit points are dense within any simple curve $\gamma\subset X$.
\end{lemm}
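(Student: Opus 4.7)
The plan is to produce a transit point on $|\gamma|$ near any given $p \in |\gamma|$ via a Jordan-curve-theorem argument: pick two points $x$, $y$ on opposite sides of $|\gamma|$ in a small disk neighborhood $V$ of $p$, produce a geodesic $\sigma$ in $X$ from $x$ to $y$, and observe that $\sigma$ must cross $|\gamma|$ at an interior point, furnishing a transit point inside $V$. Since $V$ can be taken arbitrarily small, this proves density of transit points in $|\gamma|$; the density of transit points in $X$ is then the special case in which one takes $\gamma$ to be any simple curve through a prescribed point.

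For the main case, I assume $p$ lies in the interior of $X$ and is not an endpoint of $\gamma$. Using the topological 2-manifold structure, I would choose a neighborhood $V$ of $p$ homeomorphic to an open disk with $\overline{V}$ compact, and small enough that $|\gamma| \cap V$ is a single simple arc with both endpoints on $\partial V$; this is possible since $\gamma$, as a simple curve in a Hausdorff space, is locally a topological embedding near any non-endpoint. By the Jordan arc theorem applied inside the disk $V$, this arc separates $V$ into two components. For $\varepsilon > 0$ small, I pick $x, y \in V$ on opposite sides of this arc with $d(x,p), d(y,p) < \varepsilon$. Since $X$ is a length space and $\overline{V}$ is compact, a minimizing sequence of arclength-parameterized curves from $x$ to $y$ has length at most $2\varepsilon$, hence stays in $\overline{V}$, and by Arzelà–Ascoli subconverges to a geodesic $\sigma \subset \overline{V}$ of length exactly $d(x,y)$. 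Since $x, y \notin |\gamma|$, any crossing point $q \in |\sigma| \cap |\gamma|$ lies in the interior of $\sigma$, making $q$ a transit point; such $q$ exists because $\sigma$ joins the two components of $V \setminus |\gamma|$. As $\varepsilon \to 0$, the transit point $q$ approaches $p$ along $|\gamma|$.

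It remains to address the boundary. If $p \in \partial X$, then $p$ lies on a piecewise geodesic component of $\partial X$, and interior points of the geodesic segments of the boundary are trivially transit points and dense in $\partial X$; the same observation handles any sub-arc of $|\gamma|$ contained in $\partial X$. Endpoints of $\gamma$ and isolated intersections of $|\gamma|$ with $\partial X$ are handled by approximating from within $|\gamma| \cap \Int(X)$ and applying the interior argument. The step I expect to be the main obstacle is the local geodesic existence: a general length space need not be geodesic, but local compactness of the 2-manifold combined with Arzelà–Ascoli supplies geodesics between sufficiently close points. A secondary subtlety is arranging $V$ small enough that $|\gamma| \cap V$ is a single arc reaching $\partial V$, which is where the simple-curve hypothesis on $\gamma$ is used.
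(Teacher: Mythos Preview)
The paper does not actually supply a proof of this lemma; it states the result with a citation to Reshetnyak and moves on. Your argument---join two points on opposite sides of $|\gamma|$ by a short geodesic obtained via Arzel\`a--Ascoli and local compactness, then note that the crossing point is an interior point of that geodesic---is the standard one and is correct in outline.

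Two places deserve tightening. First, ``has length at most $2\varepsilon$, hence stays in $\overline{V}$'' is not automatic: a curve of length $<2\varepsilon$ from $x$ is only guaranteed to stay in $\overline{B}(x,2\varepsilon)$, so you must choose $\varepsilon$ small relative to $d(p,X\setminus V)$ after $V$ is fixed. This also ensures that the geodesic $\sigma$ lies in $V$, which is what forces it to cross the separating arc $|\gamma|\cap V$ rather than some other portion of $|\gamma|$. Second, your boundary discussion splits into sub-cases (sub-arcs of $|\gamma|$ in $\partial X$, isolated intersections) but misses the possibility that $|\gamma|\cap\partial X$ accumulates at $p$ without containing a sub-arc. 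This is easily patched: since $\partial X$ is piecewise geodesic, its vertices form a locally finite set, so any point of $|\gamma|\cap\partial X$ that is not a vertex is already a transit point; together with the interior argument applied to nearby points of $|\gamma|\setminus\partial X$, this covers all cases uniformly.
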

Note that, for the latter conclusion to hold, it is important that $X$ be a surface with $\partial X$ composed of piecewise geodesic curves. We remark that transit points do not play any role in the proof of \Cref{thm:main}.  Thus a reader who is not interested in conclusion \ref{item:thm_v} of \Cref{thm:maingen} may ignore all statements about transit points throughout the paper. We include the proof of \Cref{lemm:trans} for convenience of the reader.

\begin{proof}[Proof of Lemma~\ref{lemm:trans}]
Let $\gamma\subset X$ be a simple curve homeomorphic to an interval, $p \in |\gamma|$ and $\varepsilon >0$. We may assume without loss of generality that $p$ is not an endpoint of $\gamma$. Since $\partial X$ is comprised of piecewise geodesic curves, we may furthermore assume that $\gamma \subset X\setminus \partial X$. Now we extend $\gamma$ to a simple closed curve $\widetilde \gamma \subset X \setminus \partial X$ which bounds a disk $U\subset X \setminus \partial X$. Choose $\delta>0$ small enough so that
\[
\delta < \min\left\{ \varepsilon , d\left(p, |\widetilde{\gamma}|\setminus |\gamma|\right)\right\}/2
\]
and such that all pairs of points in $B(p,\delta)$ can be joined by a geodesic within $X$. Now, since $p \notin \partial X$, there exist points $x\in B(p,\delta)\cap U^\circ$ and $y\in B(p,\delta)\setminus U$. Any geodesic from $x$ to $y$ must intersect $\widetilde \gamma$ in a point $z$. By our choice of $\delta$ we must have $z\in B(p,\varepsilon)\cap |\gamma|$. Thus $z$ is the desired transit point.
\end{proof}

 \subsection{Disks and polygons} \label{sec:disks}

 Throughout this section, let $X$ be as in Theorem~\ref{thm:maingen}. That is, $X$ is a length space homeomorphic to a surface such that every component of $\partial X$ is piecewise geodesic. A set $U\subset X$ is a \emph{neighbourhood} of $x\in X$ if~$x$ lies in the topological interior of~$U$ within~$X$, and a \emph{disk} if $U$ is homeomorphic to a closed ball in~$\mathbb{R}^2$. If~$U$ is a disk, then we denote by~$\partial U$ its boundary as a manifold, rather than its topological boundary within $X$, and by $U^\circ$ its interior as a manifold. In particular, if $x \in \partial X$, then for every disk neighborhood $U$ of $x$ we have that $x \in \partial U$. A family $(U_i)_{i\in \mathcal{I}}$ of disks is $\emph{non-overlapping}$ if $U^\circ_i \cap U^\circ_j=\emptyset$ for all distinct $i,j\in \mathcal{I}$. 
 
If $U$ is a disk then, after fixing an orientation on $U$, to every $p\in U$ and every closed curve $c$ in $U\setminus \{p\}$ we can associate an integer called the \emph{winding number}. The winding number is characterized, up to sign, by the following properties:
\begin{itemize}
\item If $c$ is simple and non-contractible in $U \setminus \{p\}$, then the winding number is either~$1$ or~$-1$. \item The winding numbers of two curves agree precisely when they are homotopic within $U\setminus \{p\}$.
\item The winding number is additive with respect to composition of closed curves.
\end{itemize} 
 We further observe that for fixed $p$ the winding number is continuous as a function of~$c$ with respect to uniform convergence. We say that $c$ \emph{winds around $p$} if $c$ is non-contractible in $U\setminus \{p\}$ or, equivalently, if the winding number of $c$ and $p$ is nonzero. We also say that $c$ winds around the set $A\subset U\setminus|c|$ if $c$ winds around every $x\in A$.
 
A polygon is a disk $P \subset X$ with piecewise geodesic boundary $\partial P$, together with a representation of $\partial P$ as a piecewise geodesic curve $e_1*\cdots *e_n$. Each geodesic $e_i$ is called an \textit{edge} of $P$, and each initial point of an edge is called a \textit{vertex}. If $e_{i} * e_{i+1}$ is a geodesic for some $i \in \{1, \ldots, n\}$ (taking $e_{n+1}=e_1$), then $e_i$ and $e_{i+1}$ can be consolidated into a single edge $\widetilde{e}_i$, thus forming a new polygon with $n-1$ edges. A polygon is \textit{reduced} if no such consolidation is possible. For any polygon~$P$, repeating this process of consolidation gives a reduced polygon. A polygon is a \emph{triangle} if it has at most $3$ vertices and a \emph{bigon} if it has exactly $2$ vertices. A triangle is called \emph{degenerate} if it can be reduced to a bigon. Note that whether a triangle~$T$ is degenerate or not depends not only on $T$ as a set but also on the choice of boundary geodesics. For example, let $T$ be the square $[0,1]^2$ equipped with the $\ell^1$-metric. Taking $\{(0,0),(1,1)\}$ as the vertex set gives a representation of $T$ as a bigon, while $\{(1/2,0),(0,1/2),(1,1)\}$ gives a representation of $T$ as a non-degenerate triangle. Indeed, if $B$ is a bigon with vertices $p$ and $q$, and $\partial B$ is locally a geodesic at the vertex $p$, then we can always turn $B$ into a non-degenerate triangle by what we call the \textit{vertex perturbation trick}; see the proof of Lemma III.7 in \cite[p.60-61]{AZ:67}. Namely, replace $p$ by sufficiently close points $p_l$ and $ p_r$ that lie in different connected components of $\partial B \setminus \{p,q\}$. It is easy to check that the three triangle inequalities for the vertices $q,p_l,p_r$ are strict, and hence we obtain a representation of the set $B$ as a non-degenerate triangle.

\section{Boundary convexity}
\label{sec:boundary_convexity}
Throughout this section, let $X$ be as in Theorem~\ref{thm:maingen}. That is, $X$ is a length space homeomorphic to a surface such that every component of $\partial X$ is piecewise geodesic. A set $K\subset X$ is \emph{convex} if for every $x,y\in K$ some geodesic from $x$ to $y$ is contained in $K$, and  \emph{completely convex} if for every $x,y\in K$ every geodesic from $x$ to $y$ is contained in $K$. The following convexity property plays a fundamental role in the proof of \Cref{thm:main}.
\begin{defi} \label{defi:convexity}
A disk $K\subset X$ is \emph{convex relative to its boundary} or \emph{boundary convex} if there is a disk $U$ containing $K$ such that the following hold:
\begin{enumerate}
    \item $d(K,\partial U\setminus \partial X)> 4 \cdot \ell(\partial K)$, \label{item:boundary_convexity_1}
        \item $\diam(U) \leq \diam(X)/3$, and \label{item:boundary_convexity_3}
    \item for every subarc $\gamma$ of $\partial K$ and every curve $\eta$ in $ U\setminus K^\circ$ that is path homotopic to $\gamma$ within $U\setminus K^\circ$, one has $\ell(\gamma)\leq \ell(\eta)$. \label{item:boundary_convexity_2}
\end{enumerate}
\end{defi}
In the situation of \Cref{defi:convexity}, we also say that $K$ is \textit{boundary convex with respect to $U$}. 
It is easy to see that boundary convexity implies convexity. Finally, the disk $K$ is \emph{absolutely convex} if it is boundary convex and completely convex.  

Note that boundary convexity is called ``bounded convexity'' in \cite{Res:93}.  Our definition is slightly more restrictive than the ones given on p.~48 of \cite{AZ:67} and on p.~80 of \cite{Res:93}. The main difference is that we have added condition (\ref{item:boundary_convexity_3}). This ensures that if $U_1$ and $U_2$ are both ambient disks satisfying conditions~(\ref{item:boundary_convexity_1}) and ~(\ref{item:boundary_convexity_3}) for a given disk~$K$, then condition~(\ref{item:boundary_convexity_2}) holds for $U_1$ if and only if it holds for $U_2$. Condition~(\ref{item:boundary_convexity_1}) alone does not suffice to guarantee this independence, because without (\ref{item:boundary_convexity_3}) a curve $\eta\subset U_1\cap U_2$ may homotope to different subarcs of $\partial K$ in the respective ambient disks $U_1$ and $U_2$. This unpleasant behavior can also be avoided by assuming that $X$ is not homeomorphic to $\mathbb{S}^2$. 


The proof of \Cref{thm:main} depends in an essential way on boundary convexity, as opposed to convexity or complete convexity. The reason is that boundary convexity is preserved by certain operations of both intersection and subdivision. This is the content of the next two lemmas.
\begin{lemm}
\label{lemm:intersect}
Let $K_1$ and $K_2$ be boundary convex disks. If $W$ is the closure of a connected component of the interior of $K_1\cap K_2$, then $W$ is a boundary convex disk. 
\end{lemm}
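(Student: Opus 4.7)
The plan is to first establish the topology of $W$ and then verify the three conditions of \Cref{defi:convexity} for a suitably chosen ambient disk. Since $K_1$ and $K_2$ are topological disks with rectifiable boundaries in the surface $X$, the closure $W$ of a connected component of the interior of $K_1 \cap K_2$ is itself a topological disk; this follows from a standard planar-topology argument after passing to a disk neighborhood of $K_1 \cup K_2$ and applying the Sch\"onflies theorem. The boundary $\partial W$ is a Jordan curve that decomposes as a concatenation of alternating subarcs of $\partial K_1$ and $\partial K_2$. Since the subarcs of $\partial W$ lying on $\partial K_1$ form pairwise disjoint subarcs of $\partial K_1$ (and likewise for $\partial K_2$), one obtains the key length bound $\ell(\partial W) \le \ell(\partial K_1) + \ell(\partial K_2)$, along with the trivial bound $\diam(W) \le \min\{\diam(K_1), \diam(K_2)\}$.

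For the ambient disk, I would take $V$ to be one of the disks $U_1, U_2$ provided for $K_1, K_2$ by \Cref{defi:convexity}, relabeling (or replacing by a suitable subdisk) so that the distance buffer is large enough after absorbing the contribution of $\ell(\partial K_{3-i})$ in the length bound above. The diameter condition of \Cref{defi:convexity} is then immediate from $V \subset U_i$, and the buffer condition follows from $W \subset K_i$ together with that length bound.

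The substantive step is verifying the homotopy-length condition. Given a subarc $\gamma \subset \partial W$ and a curve $\eta \subset V \setminus W^\circ$ path-homotopic to $\gamma$ in $V \setminus W^\circ$, I would decompose $\gamma = c_1 * c_2 * \cdots$ along the switching points of $\partial W$, so that each $c_j$ is a subarc of $\partial K_{i_j}$ for some $i_j \in \{1, 2\}$. A continuity and homotopy-lifting argument in $V \setminus W^\circ$ produces break points on $\eta$ such that, setting $\eta_j$ to be the corresponding subcurve, the loop $\overline{c_j} * \eta_j$ is null-homotopic in $V \setminus W^\circ$. For each $j$, one then modifies $\eta_j$ to a curve $\eta_j'$ lying in $U_{i_j} \setminus K_{i_j}^\circ$ and path-homotopic to $c_j$ there, with $\ell(\eta_j') \le \ell(\eta_j)$. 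Boundary convexity of $K_{i_j}$ then gives $\ell(c_j) \le \ell(\eta_j')$, and summing yields $\ell(\gamma) \le \ell(\eta)$.

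The main obstacle will be the modification producing $\eta_j'$: when $\eta_j$ enters a ``petal'' $K_{i_j}^\circ \setminus K_{3-i_j}$ (which lies in $V \setminus W^\circ$ but not in $U_{i_j} \setminus K_{i_j}^\circ$), it cannot simply be pushed out to $\partial K_{i_j}$, since replacing an interior excursion by a boundary arc typically lengthens the curve. A more careful local cut-and-paste is required that simultaneously respects the homotopy class in $U_{i_j} \setminus K_{i_j}^\circ$ and the length bound; the key input will be the boundary convexity of $K_{3-i_j}$ along the $\partial K_{3-i_j}$ arcs of $\partial W$ that border the petal. A secondary technical point is matching the exact factor $4$ in the buffer condition, since the naive choice $V = U_i$ can only guarantee a smaller factor.
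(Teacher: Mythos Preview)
Your verification of the homotopy-length condition has a genuine gap beyond the one you flag. The claimed break points on $\eta$ making each $\bar{c}_j * \eta_j$ null-homotopic in $V \setminus W^\circ$ do not exist in general: for that loop even to be defined, the break points would have to coincide with the switching points $p_j$ of $\partial W$, and $\eta$ need not pass through any of them. Inserting auxiliary arcs from $p_j$ to points of $|\eta|$ destroys the additivity $\sum_j \ell(\eta_j) \leq \ell(\eta)$ on which your summation rests. (Note also that $\partial K_1 \cap \partial K_2$ may have infinitely many components, so the decomposition of $\gamma$ along switching points need not even be finite.) Together with the unresolved petal problem, the piece-by-piece strategy does not go through.

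The paper's route is different and sidesteps all of these issues. Assuming $\ell(\partial K_1) \geq \ell(\partial K_2)$, it takes $U_1$ as the ambient disk and argues the homotopy-length condition by contradiction: pass to a length-minimizing $\widetilde\eta$ in its homotopy class (Arzel\`a--Ascoli), then use boundary convexity of $K_1$ to force $|\widetilde\eta| \subset K_1$. One extracts a subcurve $\eta$ meeting $\partial W$ only at its endpoints with $\ell(\eta) < \ell(\gamma)$ for the homotopic subarc $\gamma \subset \partial W$; since $|\eta| \subset K_1$, this $\gamma$ lies entirely in $\partial K_2$, so only one convexity is in play at a time and no decomposition along switching points is needed. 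The excursions of $\eta$ outside $K_2$ are replaced by their homotopic $\partial K_2$-arcs, each no longer than the excursion by boundary convexity of $K_2$, and a winding-number argument yields the contradiction. This ordering also resolves your factor-$4$ worry: once the homotopy-length condition holds for $W$, it is applied to the $\partial K_1$-arcs of $\partial W$ to obtain the sharp bound $\ell(\partial W) \leq \ell(\partial K_2)$, from which the buffer condition with constant $4$ follows directly.
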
 
Note that \Cref{lemm:intersect} fails when boundary convexity is replaced by mere convexity. To obtain a counterexample, we start with a metric graph as in \Cref{fig:convexity} with the property that $\ell(\beta_1) = \ell(\beta_2) < \ell(\alpha_1) = \ell(\alpha_2)$. We construct a surface~$X$ by gluing in  three round hemispheres isometrically along their boundary curves and, respectively, $\bar{\beta}_1*\alpha_1$, $\bar{\alpha}_1*\alpha_2$ and $\bar{\alpha}_2*\beta_2$. Let $K_1$ be the polygon bounded by $\beta_1$ and~$\alpha_2$,  and let $K_2$ be the polygon bounded by $\alpha_1$ and $\beta_2$. Then $K_1$ and~$K_2$ are both convex, while the intersection $K_1 \cap K_2$ is the hemisphere bounded by $\alpha_1$ and~$\alpha_2$, which is not convex. The intersection property in \Cref{lemm:intersect} does not appear explicitly in \cite{AZ:67}, but we need it to work around Lemma~III.6 in \cite{AZ:67}. See \Cref{subsec:covover} below.

\begin{figure} 
    \centering
    \begin{tikzpicture}[scale=6]
    \draw[orange,thick] (0,0) .. controls (.2,.4) and (.8,.4) .. (1,0);
    \draw[darkgray,thick] (0,0) .. controls (.4,.15) and (.6,.15) .. (1,0);
    \draw[red,thick] (0,0) .. controls (.2,.-.4) and (.8,-.4) .. (1,0); 
    \draw[blue,thick] (0,0) .. controls (.4,.-.15) and (.6,-.15) .. (1,0);
    \filldraw (0,0) circle (.225pt);
    \filldraw (1,0) circle (.225pt);
    \filldraw[white] (.5,.17) circle (.2pt);
    \node[red] at (.165,-.25) {\Large $\beta_2$};
    \node[orange] at (.165,.25) {\Large $\beta_1$};
    \node[blue] at (.3,-.125) {\Large $\alpha_2$};
    \node[darkgray] at (.3,.125) {\Large $\alpha_1$};
\end{tikzpicture}
    \caption{}
    \label{fig:convexity}
\end{figure}

For the proof, we recall that the subspace metric on a convex subset of a length metric space itself defines a length metric on the subset.
\begin{proof}
Let $W$ be the closure of a connected component of the interior of $K_1 \cap K_2$, and let $U_1, U_2$ denote the ambient disk neighborhoods of $K_1, K_2$, respectively, as in \Cref{defi:convexity}. We assume without loss of generality that $\ell(\partial K_1) \geq \ell(\partial K_2)$. Then $\partial K_2\subset U_1 \cap U_2$. Note that, by condition (\ref{item:boundary_convexity_3}), whenever $c\subset U_1\cap U_2$ is a Jordan curve, then the disk bounded by~$c$ within~$U_1$ is the same as the disk bounded by~$c$ within~$U_2$. Compare also the discussion after \Cref{defi:convexity}. Applying this observation to $\partial K_2$, we conclude that $K_2\subset U_1$.

To prove that $W$ is a disk we will employ Kerékjártó's theorem (see e.g. \cite[p.168]{New:64}). Kerékjártó's theorem states that whenever $\gamma_1$ and $\gamma_2$ are simple closed curves in $\mathbb{R}^2$ that have more than one point in common, then the closure of every bounded complementary component of $|\gamma_1|\cup |\gamma_2|$ is a disk. Note that $W$ is also the closure of some complementary component of $\partial K_1\cup \partial K_2$ in $U_1$. If $W$ is not equal to $K_1$ or $K_2$,  then $\partial K_1$ and $\partial K_2$ have more than one point in common. Hence Kerékjártó's theorem implies that $W$ is a disk.

It remains to show that $W$ is boundary convex with respect to $U_1$. Certainly condition (\ref{item:boundary_convexity_3}) is satisfied. Assume that condition (\ref{item:boundary_convexity_2}) fails. Then there are a subarc $\widetilde{\gamma}$ of $\partial W$ and a curve $\widetilde{\eta} \subset U_1\setminus W^\circ$ that are path homotopic within $U_1\setminus W^\circ$ and such that $\ell(\widetilde{\eta})<\ell(\widetilde{\gamma})$. By the Arzel\`a--Ascoli theorem, lower semicontinuity of length and continuity of the winding number, we may assume that $\widetilde{\eta}$ is shortest among all curves that are path homotopic to $\widetilde{\gamma}$ within $U_1 \setminus W^\circ$. 
By the boundary convexity of $K_1$ with respect to $U_1$, we may assume that $|\widetilde{\eta}|\subset K_1$. There must be subcurve $\eta$ of $\widetilde{\eta}$ such that $\eta$ intersects $\partial W$ only in its endpoints and $\ell(\eta)<\ell(\gamma)$, where $\gamma$ is the subarc of $\partial W$ that is path homotopic to $\eta$ within $U_1\setminus W^\circ$. Otherwise, $\widetilde{\eta}$ would be path homotopic within $U_1\setminus W^\circ$ to a curve $\widetilde{\nu}$ that is contained in $\partial W$ and satisfies $\ell(\widetilde{\nu})\leq \ell(\widetilde{\eta})$. This would be a contradiction since then $|\widetilde{\gamma}|\subset |\widetilde{\nu}|$ and hence 
\[
\ell(\widetilde{\gamma})=\mathcal{H}^1(|\widetilde{\gamma}|)\leq \mathcal{H}^1(|\widetilde{\nu}|)\leq \ell(\widetilde{\nu})\leq \ell(\widetilde{\eta}).
\]
Note that $\gamma$ must be contained in $\partial K_2$. Thus, if $\eta$ intersected the entire disk $K_2$ only at its endpoints, then we could apply the boundary convexity of $K_2$ to derive that $\gamma$ is a geodesic and hence obtain a contradiction. However, since this might \textit{a priori} not be true, we must work harder. See \Cref{fig:boundary_convex} for an illustration of such a potential situation.

\begin{figure}
\begin{tikzpicture}[scale=2.8]
    \fill[gray,opacity=.2] (.5,.2) to (1.5,.2) to (1.5,.6) to (.5,.6);
    \draw[thick,blue] (.5,1.4) to (1.5,1.4) to (1.5,0) to (.5,0) to (.5,1.4);
    \draw[thick,red] (.5,1.2) to (2.1,1.2) to (2.1,.2) to (.5,.2) .. controls (.2,.2) and (.2,.6) .. (.5,.6) to (1.5,.6) .. controls (1.7,.6) and (1.7,.8) .. (1.5,.8) to (.5,.8) .. controls (.2,.8) and (.2,1.2) .. (.5,1.2);
    \draw[thick] (.7,.6) to (1.3,.6);
    \draw[thick,orange] (.7,.6) .. controls (.7,1.1) and (1.3,1.1) .. (1.3,.6);
    \filldraw[blue] (.5,1.4) circle (.5pt);
    \filldraw[blue] (1.5,1.4) circle (.5pt);
    \filldraw[blue] (1.5,0) circle (.5pt);
    \filldraw[blue] (.5,0) circle (.5pt);
    \filldraw[red] (.6,.2) circle (.5pt);
    \filldraw[red] (.6,.6) circle (.5pt);
    \filldraw[red] (.6,.8) circle (.5pt);
    \filldraw[red] (.6,1.2) circle (.5pt);
    \filldraw[red] (2.1,.2) circle (.5pt);
    \filldraw[red] (1.4,.6) circle (.5pt);
    \filldraw[red] (1.4,.8) circle (.5pt);
    \filldraw[red] (2.1,1.2) circle (.5pt);
    \filldraw[] (.7,.6) circle (.5pt);
    \filldraw[] (1.3,.6) circle (.5pt);
    \node[blue] at (.375,1.4) {$K_1$};
    \node[red] at (1.9,.1) {$K_2$};
    \node[] at (1.1,.525) {$\gamma$};
    \node[orange] at (1.15,1.) {$\eta$};
    \node at (.8,.3) {$W$};
    \end{tikzpicture}
    \caption{}
    \label{fig:boundary_convex}
\end{figure}

First, we claim that $\eta$ is simple. To verify this, note that both endpoints of $\eta$ lie on a simple arc $A\subset K_1 \cap \partial K_2\cap \partial W$  that contains $|\gamma|$ and separates $|\eta| \setminus A$ from $W^\circ$ within $K_1$. 
Thus, if $\eta$ had a self-intersection, then we could shorten $\eta$ by deleting some subcurve of it. Note that $A$ would still separate the resulting curve from $W^\circ$  within $K_1$ and thus this curve must also be path homotopic to $\gamma$ within $U_1\setminus W^\circ$. This would contradict the length minimality of $\eta$. 
Let $c=\eta*\bar{\gamma}$ and denote by $O$ the complementary component of $|\eta|\cup \partial K_1 \cup \partial K_2$ that is adjacent to $\gamma$ and differs from $W$. Note that $|c| \subset U_1\setminus O$ and $c$ winds around $O$ within $U_1$. Denote by $(\eta_i)_{i\in \mathcal{I}}$ the closures of the connected components of $|\eta| \setminus K_2$, and for each $i$ denote by $\gamma_i$ the respective subarc of $\partial K_2$ which is path homotopic to $\eta_i$ within $U_1\setminus  K_2^\circ$. Note that, again by condition~\eqref{item:boundary_convexity_3}, $\gamma_i$ is also path homotopic to $\eta_i$ within $U_2\setminus K_2^\circ$, and hence we conclude by the boundary convexity of $K_2$ that $\ell(\gamma_i)\leq \ell(\eta_i)$. This in turn implies that each $\gamma_i$ is path homotopic to $\eta_i$ within $U_1\setminus O$, for otherwise $|\gamma|\subset |\gamma_i|$ and hence \[\ell(\gamma)\leq \ell(\gamma_i)\leq \ell(\eta_i)\leq \ell(\eta).\] Thus the curve $\widehat{c}$ obtained by replacing in $c$ each $\eta_i$ with $\gamma_i$ is path homotopic to $c$ within $U_1\setminus O$. However, $|\widehat{c}|$ is contained in $K_2$, and hence we conclude that $c$ is contractible in $U_1\setminus O$. This gives a contradiction, since we had initially observed that $c$ winds around $O$ within $U_1$.

Finally, we verify condition (\ref{item:boundary_convexity_1}). It suffices to show that $\ell(\partial W)\leq \ell(\partial K_2)$, since then
\[
d(W,\partial U_1 \setminus \partial X)\geq d(K_1,\partial U_1 \setminus \partial X)>4 \cdot \ell(\partial K_1)\geq 4\cdot \ell(\partial K_2)\geq \ell(\partial W).
\]
To this end, let $(\alpha_i)_{i\in \mathcal{I}}$ be the countable family of closures of the connected components of $\partial W\setminus \partial K_2$. Then each $\alpha_i$ is a subarc of $\partial K_1$ and contained in $K_2$ and intersects $\partial K_2$ precisely at its endpoints. Let  $\beta_i$ be the subarc of $\partial K_2$ which is path homotopic to $\alpha_i$ within $U_1\setminus W^\circ$. By condition (\ref{item:boundary_convexity_2}),  we have $\ell(\alpha_i)\leq \ell(\beta_i)$. Note that distinct $\beta_i$ and $\beta_j$ can intersect at most in their endpoints and that $\beta_i$ intersects $\partial W$ only in its endpoints. We conclude that $\ell(\partial W)\leq \ell(\partial K_2)$.
\end{proof}

\begin{lemm}
\label{lemm:part}
Let $P\subset X$ be a boundary convex polygon and $\gamma$ a geodesic in $X$ with endpoints in $X\setminus P^\circ$. If $Q$ is the closure of some connected component of $P\setminus |\gamma|$, then $Q$ is a boundary convex polygon. Furthermore, $\partial Q\setminus \partial P$ consists of transit points.
\end{lemm}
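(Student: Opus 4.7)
The lemma has two assertions: that $Q$ is a boundary-convex polygon, and that $\partial Q\setminus\partial P$ consists of transit points. The latter is immediate: $\partial Q\setminus\partial P\subset|\gamma|$, and since the endpoints of $\gamma$ lie in $X\setminus P^\circ$ they are either on $\partial P$ (hence not in $\partial Q\setminus\partial P$) or outside $P$ entirely (hence not in $\partial Q$), so $\partial Q\setminus\partial P$ consists of interior points of the geodesic $\gamma$.

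After reparametrization, $\gamma$ is simple (else a length-zero loop may be erased), and thus $|\gamma|\cap P$ is a finite disjoint union of simple geodesic arcs $\delta_1,\dots,\delta_m$ with endpoints on $\partial P$. Cutting $P$ successively along these arcs yields disks by a Kerékjártó-style argument in the spirit of the topological step in the proof of \Cref{lemm:intersect}, and the boundary of each resulting disk is piecewise geodesic, composed of subarcs of $\partial P$ and of the $\delta_i$. Hence $Q$ is a polygon. I aim to show $Q$ is boundary convex with respect to the same ambient disk $U$ witnessing the boundary convexity of $P$. Condition~\ref{item:boundary_convexity_3} is inherited, and condition~\ref{item:boundary_convexity_1} reduces to verifying $\ell(\partial Q)\leq\ell(\partial P)$. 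For this I decompose $\partial Q$ cyclically into maximal $\partial P$-arcs $A_i$ alternating with maximal $|\gamma|$-arcs $B_i$; then $\partial P$ decomposes cyclically as an alternating concatenation of the same $A_i$ with arcs $C_i\subset\partial P\setminus\partial Q$ joining the same endpoints as the respective $B_i$. Since each $B_i$ is a geodesic, $\ell(B_i)\leq\ell(C_i)$, and summing yields the desired bound.

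The main obstacle is verifying condition~\ref{item:boundary_convexity_2} for $Q$. Given a subarc $\sigma\subset\partial Q$ and a length-minimizing curve $\tau\subset U\setminus Q^\circ$ path homotopic to $\sigma$ within $U\setminus Q^\circ$, my plan is to reduce to the corresponding property of $P$ by constructing $\widetilde\sigma\subset\partial P$ with $\ell(\widetilde\sigma)\geq\ell(\sigma)$ together with $\widetilde\tau\subset U\setminus P^\circ$ satisfying $\ell(\widetilde\tau)\leq\ell(\tau)$ and path homotopic to $\widetilde\sigma$ within $U\setminus P^\circ$. The curve $\widetilde\sigma$ is obtained by replacing each $\gamma$-subarc of $\sigma$ with the arc $C_i$ of $\partial P$ connecting its endpoints on the side opposite $Q$; the geodesic character of the $\gamma$-subarcs gives $\ell(\widetilde\sigma)\geq\ell(\sigma)$. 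Boundary convexity of $P$ then yields $\ell(\sigma)\leq\ell(\widetilde\sigma)\leq\ell(\widetilde\tau)\leq\ell(\tau)$.

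The most delicate step is the construction of $\widetilde\tau$. The curve $\tau$ may enter components of $P\setminus|\gamma|$ distinct from $Q$, and each maximal such incursion must be pushed out to the complement of $P$ without increasing length. The length-minimality of $\tau$ forces each such sub-curve to be a local geodesic of $X$, and I expect a winding-number argument in the spirit of the proof of \Cref{lemm:intersect}, combined with the geodesic character of the bounding $\gamma$-subarcs, to control both the length and the path-homotopy class of the replacement.
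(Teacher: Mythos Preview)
Your transit-point argument is fine, but there are two genuine gaps.

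First, the claim that $|\gamma|\cap P$ is a \emph{finite} union of arcs $\delta_1,\dots,\delta_m$ is unjustified and in general false: a geodesic can meet an edge of $\partial P$ in a Cantor-like set (this is exactly the superfluous-intersection phenomenon of \Cref{sec:sup}). Without finiteness you have not shown that $Q$ is a polygon, and your cyclic $A_i/B_i/C_i$ decomposition of $\partial Q$ and $\partial P$ is not well-defined. Second, you yourself flag the construction of $\widetilde\tau$ as incomplete. It is in fact the wrong direction to push: the excursions of $\tau$ into $P^\circ\setminus Q^\circ$ are bounded on the outside by arcs of $\partial P$, which are not geodesics, so there is no immediate length bound when replacing them, and keeping the homotopy classes of $\widetilde\tau$ and $\widetilde\sigma$ aligned is delicate.

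The paper sidesteps both issues with one preliminary move: replace every maximal subarc of $\gamma$ lying outside $P$ by the subarc of $\partial P$ path homotopic to it in $U\setminus P^\circ$. Boundary convexity of $P$ says this does not lengthen, and the geodesic property of $\gamma$ says it does not shorten; hence the modified curve is still a geodesic, now contained in $P$, and $Q$ is unchanged. After this reduction $\partial Q=\eta*\alpha$ with $\eta$ a single subarc of $\partial P$ and $\alpha$ a single subgeodesic of $\gamma$, so $Q$ is visibly a polygon and $\ell(\partial Q)=\ell(\eta)+\ell(\alpha)\leq\ell(\partial P)$ is immediate. For condition~(\ref{item:boundary_convexity_2}) the paper pushes \emph{inward}: given $\nu\subset U\setminus Q^\circ$ homotopic to $c\subset\partial Q$, first use boundary convexity of $P$ to homotope $\nu$ to some $\nu_1\subset P\setminus Q^\circ$ with $\ell(\nu_1)\leq\ell(\nu)$; then replace each excursion of $\nu_1$ away from $Q$ by the homotopic subarc of the geodesic $\alpha$, obtaining $\nu_2\subset\partial Q$ with $\ell(\nu_2)\leq\ell(\nu_1)$. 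Since $c$ is simple and homotopic to $\nu_2$ inside $\partial Q$, one has $|c|\subset|\nu_2|$ and hence $\ell(c)\leq\ell(\nu)$. Both shortening steps invoke an inequality that is directly available; your outward-pushing scheme has neither at hand.
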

See \cite[Lem. III.2, p.49]{AZ:67} for a slightly weaker result. Note that \Cref{lemm:part} holds when boundary convexity is replaced by convexity, but not when replaced by complete convexity.
\begin{proof}
Let $U$ be an ambient disk such that $P$ is boundary convex with respect to~$U$. Since otherwise the claim is obvious, we may assume that $Q\neq P$ and, by possibly deleting initial and terminal subcurves, that $\gamma$ has endpoints in~$\partial P$. Notice that $Q$ still appears as the closure of some complementary component when replacing  all portions of $\gamma$ that lie outside $P$ by the respective homotopic subcurves of $\partial P$. Hence, by the boundary convexity of $P$, we may assume that $|\gamma| \subset P$. 
Then $Q$ is a polygon with boundary comprised of a subarc  $\eta$ of $\partial P$ and a subcurve $\alpha$ of $\gamma$. 

We show that $Q$ is boundary convex with respect to~$U$. Clearly Condition~\eqref{item:boundary_convexity_3} is satisfied. That $\alpha$ is a geodesic implies Condition~\eqref{item:boundary_convexity_1}, since
\[
d(Q,\partial U\setminus \partial X)\geq d(P,\partial U\setminus \partial X)> 4\cdot \ell(\partial P)\geq 4\cdot(\ell(\alpha)+\ell(\eta))=4\cdot \ell(\partial Q).
\]
To check condition (\ref{item:boundary_convexity_2}), let $c$  be a simple subcurve of $\partial Q$ and $\nu$ be a simple curve in $U \setminus Q^\circ$ that is path homotopic to $c$ within $U\setminus Q^\circ$. 
By the boundary convexity of $P$, we may find a curve $\nu_1\subset P\setminus Q^\circ$ that is path homotopic to $\nu$ within $U\setminus Q^\circ$ and satisfies $\ell(\nu_1)\leq \ell(\nu)$. 
The closure of each connected component of $\nu_1\setminus Q$ is moreover path homotopic within $U\setminus Q^\circ$ to some subarc of the geodesic~$\alpha$. Thus, we may find a curve $\nu_2$ in $\partial Q$ that is path homotopic to $\nu_1$ within $U\setminus Q^\circ$ and satisfies $\ell(\nu_2)\leq \ell(\nu_1)$. Since $c$ and $\nu_2$ are homotopic within $U\setminus Q^\circ$, both contained in $\partial Q$ and $c$ is simple, we must have that $|c|\subset |\nu_2|$. Then
\[
\ell(c) = \mathcal{H}^1(|c|)\leq \mathcal{H}^1(|\nu_2|) \leq \ell(\nu_2)\leq \ell(\nu_1)\leq \ell(\nu).
\]
We conclude that $Q$ is boundary convex with respect to $U$.
\end{proof}

\section{Superfluous intersections of geodesics}
 \label{sec:sup}
Let $X$ be as in the statement of Theorem~\ref{thm:maingen}. That is, $X$ is a length space homeomorphic to a surface such that every component of $\partial X$ is piecewise geodesic. A family~$(\gamma_i)_{i\in \mathcal{I}}$ of geodesics in $X$ \textit{does not have superfluous intersections} if for every $i,j \in \mathcal{I}$ the intersection $|\gamma_i|\cap |\gamma_j|$ is connected. Similarly, we say that an additional geodesic $\gamma$ \textit{does not have superfluous intersections with} $(\gamma_i)_{i \in \mathcal{I}}$ if for every $i \in \mathcal{I}$ the intersection $|\gamma| \cap |\gamma_i|$ is connected. It is claimed on p. 51 of~\cite{AZ:67} and p. 79 of~\cite{Res:93} that, given points $x,y\in X$ and a  finite system of geodesics $(\gamma_i)_{i=1}^k$ that does not have superfluous intersections, then one can always find a geodesic joining $x$ to $y$ that does not have superfluous intersections with $(\gamma_i)_{i=1}^k$. This claimed observation is frequently used in \cite{AZ:67}. However, it turns out to be false in general. As a counterexample, consider a surface $X$ containing geodesics $\gamma_1, \ldots, \gamma_4$ that intersect as pictured in \Cref{fig:superfluous_intersection}, with the property that any geodesic connecting the pictured points $x$ and $y$ is contained in $|\gamma_1| \cup \cdots \cup |\gamma_4|$. Then one can check that $\Gamma= (\gamma)_{i=1}^4$ does not have superfluous intersections, but any geodesic from $x$ to $y$ must have superfluous intersections with $\Gamma$.

\begin{figure} 
    \centering
    \begin{tikzpicture}[scale=8]
    \draw[orange,thick] (0,0) .. controls (0,.1) and (.25,.11) .. (.5,.05) to (.8,-.03);
    \draw[darkgray,thick] (.5,.05) .. controls (.55,.125) and (.8,.12) .. (.9,.05) to (1,0);
    \draw[red,thick] (0,0) to (1,-.1) .. controls (1.2,.-.1) and (1.2,.1) .. (1,.1) .. controls (.95,.1) and (.9,.08) .. (.9,.05); 
    \draw[blue,thick] (.6,-.06) to (1,0);
    \filldraw (0,0) circle (.2pt);
    \filldraw (1,0) circle (.2pt);
    \filldraw[white] (.5,.17) circle (.2pt);
    \node at (-.03,-.02) {\Large $x$};
    \node at (1.03,-.02) {\Large $y$};
    \node[red] at (.25,-.06) {\Large $\gamma_1$};
    \node[orange] at (.12,.11) {\Large $\gamma_2$};
    \node[blue] at (.89,-.05) {\Large $\gamma_3$};
    \node[darkgray] at (.7,.07) {\Large $\gamma_4$};
\end{tikzpicture}
    \caption{}
    \label{fig:superfluous_intersection}
\end{figure}

To overcome this complication, we prove two weaker results. The first is the following.

\begin{lemm}
\label{lemm:sup2}
Let $P\subset X$ be a polygon with edges $(e_1,\dots ,e_n)$, where $n\geq 3$, and let $p_0,p_1,p_2 \in P$. Assume further that $P$ is contained in a disk $U$ with $d(P,\partial U \setminus \partial X)>\diam(P)$. Then there is a geodesic $\gamma_1$ from $p_0$ to $p_1$ that does not have superfluous intersections with~$(e_1,\dots,e_n)$. In addition, for any such $\gamma_1$, there is a geodesic~$\gamma_2$ from $p_0$ to $p_2$ that does not have superfluous intersections with $(e_1, \dots , e_n,\gamma_1)$.
\end{lemm}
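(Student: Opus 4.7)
The hypothesis $d(P,\partial U\setminus\partial X)>\diam(P)$ will guarantee that every geodesic between two points of $P$ lies entirely in $U$, since its length is at most $\diam(P)$ and it cannot escape through $\partial U\setminus\partial X$; so all geodesics considered in the argument will stay in $U$. The first claim I plan to prove by an iterative replacement. Starting from any geodesic $\gamma$ from $p_0$ to $p_1$, whenever $|\gamma|\cap|e_i|$ is disconnected for some $i$, I let $t_1<t_2$ be the first and last times at which $\gamma$ meets $|e_i|$ and I define $\gamma'$ by replacing $\gamma|_{[t_1,t_2]}$ with the subarc $\sigma$ of $e_i$ from $\gamma(t_1)$ to $\gamma(t_2)$. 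Both segments are geodesics between the same pair of points, so they have equal length and $\gamma'$ remains a geodesic; and by construction $|\gamma'|\cap|e_i|=\sigma$ is connected. For every other edge $e_j$, I will argue, using that $|e_i|\cap|e_j|$ is at most a single vertex of $P$ together with a short case analysis on where the single interval $\gamma^{-1}(|e_j|)$ (single because $|\gamma|\cap|e_j|$ is assumed connected and $\gamma$ is injective) sits relative to $[t_1,t_2]$, that $|\gamma'|\cap|e_j|$ is still connected whenever it was so before. Hence each replacement strictly decreases the number of edges with which $\gamma$ has a superfluous intersection, so the procedure terminates in at most $n$ steps.

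For the second claim I will apply the same strategy to the augmented system $(e_1,\ldots,e_n,\gamma_1)$, whose members have pairwise connected intersections by the assumption that $\gamma_1$ has no superfluous intersection with the $e_i$. The new subtlety is that $|e_i|\cap|\gamma_1|$ may now be a whole nondegenerate subarc, which prevents a direct application of the first-claim argument. My plan is to choose the starting geodesic $\gamma$ from $p_0$ to $p_2$ carefully: I maximize the initial coincidence $\tau(\gamma):=\sup\{t:\gamma|_{[0,t]}=\gamma_1|_{[0,t]}\}$ over all such geodesics. A maximum is attained by Arzel\`a--Ascoli applied in $U$ together with the upper semicontinuity of $\tau$. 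For this maximizer an exchange argument gives $|\gamma|\cap|\gamma_1|=\gamma_1([0,\tau(\gamma)])$ and hence connected: if $\gamma(s)=\gamma_1(s)$ for some $s>\tau(\gamma)$, then the concatenation $\gamma_1|_{[0,s]}*\gamma|_{[s,L_2]}$ is again a geodesic from $p_0$ to $p_2$ of the same minimal length $L_2=d(p_0,p_2)$, with initial coincidence at least $s$, contradicting maximality.

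I then plan to apply the iterative replacement from the first claim to this $\gamma$, using only the original edges $e_i$. The main obstacle, and where the choice of a $\tau$-maximizer pays off, is checking that each such replacement preserves the connectedness of $|\gamma|\cap|\gamma_1|$. When the replacement interval $[t_1,t_2]$ lies entirely in $(\tau,L_2]$, the initial coincidence is left untouched, and the same exchange argument forbids the new subarc $\sigma\subset|e_i|$ from passing through any $\gamma_1(s)$ with $s>\tau$: otherwise we could extend the initial coincidence, contradicting maximality. In the remaining case $t_1\leq\tau<t_2$, the first hit $t_1$ must coincide with the first time $\gamma_1$ enters $|e_i|$, and a direct computation using that $\gamma_1$ and $e_i$ have at most a single connected subarc in common shows that $|\gamma'|\cap|\gamma_1|$ is still a single initial subarc of $\gamma_1$ (possibly shorter than $\gamma_1([0,\tau])$). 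In both cases the replacement strictly reduces the number of edges $e_i$ with which $\gamma$ has a superfluous intersection while keeping $|\gamma|\cap|\gamma_1|$ connected, so iterating yields the desired $\gamma_2$.
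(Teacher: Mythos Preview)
Your argument for the first claim is essentially the paper's own: both proceed by successive replacement along edges, and the fact that distinct edges meet in at most a single vertex (an \emph{endpoint} of each) is exactly what makes the case analysis go through. This part is fine.

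For the second claim your approach diverges from the paper's and, as written, has a real gap. The difficulty is in the iteration. You choose the $\tau$-maximizer $\gamma$, and for the \emph{first} replacement your two cases are handled correctly: in particular, in the case $t_1\le\tau<t_2$ you correctly observe that $|\gamma'|\cap|\gamma_1|$ becomes a (possibly shorter) initial arc $\gamma_1([0,\tau'])$ with $\tau'\le\tau$. But now $\gamma'$ is no longer a $\tau$-maximizer, and this is precisely what your argument for the other case relies on. Concretely, suppose the next replacement is along some $e_j$ with replacement interval $[t_1',t_2']\subset(\tau',L_2]$. The new subarc $\sigma'\subset|e_j|$ may well pass through a point $\gamma_1(s)$ with $\tau'<s\le\tau$: your exchange argument only forbids $s>\tau$, since forming $\gamma_1|_{[0,s]}*\gamma''|_{[s,L_2]}$ with $s\le\tau$ does not exceed the global maximum. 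In that situation $|\gamma''|\cap|\gamma_1|$ contains both $\gamma_1([0,\tau'])$ and the isolated set $\sigma'\cap\gamma_1((\tau',\tau])$, and there is nothing preventing this from being disconnected. So the invariant ``$|\gamma|\cap|\gamma_1|$ is an initial arc'' is not preserved by your iteration once $\tau$ has dropped.

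The paper sidesteps this by \emph{not} iterating. It first constructs a geodesic $\widetilde\eta$ from $p_0$ to $p_2$ without superfluous intersections with the edges (your first claim), then splices: let $q$ be the \emph{last} point of $|\widetilde\eta|\cap|\gamma_1|$ and set $\eta$ to be $\gamma_1$ up to $q$ followed by $\widetilde\eta$ from $q$ on. This $\eta$ automatically has connected intersection with $\gamma_1$, and the only way it can fail for the edges is that a single edge $e_i$ (one not containing $q$) is hit both before and after $q$. One further, carefully chosen replacement along that $e_i$ then repairs everything simultaneously; the verification that this single surgery does not reintroduce any bad intersection with $\gamma_1$ or with the other $e_j$ is a finite check, not an induction. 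If you want to salvage your strategy, you would need either an invariant stronger than ``initial arc'' that survives the $t_1\le\tau<t_2$ case, or an argument that this case can occur at most once; neither is supplied, and the paper's two-step surgery is the cleaner route.
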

Note that the respective result for three geodesics emanating from~$p_0$ fails by a counterexample similar to the construction in~\Cref{fig:superfluous_intersection}.
\begin{proof}
Since $U$ is compact and $d(P,\partial U \setminus \partial X)>\diam(P)$, the points $p_0$ and $p_1$ are joined by some geodesic $\eta_0$. Now we inductively construct geodesics $\eta_i$ from $p_0$ to $p_1$ so that $\eta_i$ does not have superfluous intersections with $(e_1, \dots , e_i)$. The first part of the claim then follows by setting $\gamma_1= \eta_n$. Assume that such $\eta_{i}$ has been constructed for some $0\leq i\leq n-1$. If $\eta_{i}$ does not intersect $e_{i+1}$, we set $\eta_{i+1}=\eta_{i}$. Otherwise, let $l$ be the first point of intersection of $\eta_{i}$ and $e_{i+1}$, and $r$ be the last one. Now $\eta_{i+1}$ is obtained from $\eta_{i}$ by replacing the portion of $\eta_{i}$ between $l$ and $r$ by the respective one of $e_{i+1}$. Clearly $|\eta_{i+1}| \cap |e_{i+1}|$ is connected. Furthermore, since $n\geq 3$, we have for $j\leq i$ that $|e_j|\cap |\eta_{i+1}|$ is either empty or equal to $|e_j| \cap |\eta_{i}|$, and hence connected.

Now fix a geodesic $\gamma_1$ of the described type. The previous argument also provides us with a geodesic $\widetilde{\eta}$ from $p_0$ to $p_2$ not having superfluous intersections with $(e_1,\dots,e_n)$. Let $q$ be the last point of intersection of $\widetilde{\eta}$ and $\gamma_1$. This setup is shown in \Cref{fig:polygon_superfluous}. If $q=p_2$, we may choose $\gamma_2$ as a subcurve of $\gamma_1$, and, if $q=p_0$, we may set $\gamma_2=\widetilde{\eta}$. Otherwise, let $\eta$ be the geodesic which is obtained from $\widetilde{\eta}$ upon replacing the portion of $\widetilde{\eta}$ between $p_0$ and $q$ with the respective portion of $\gamma_1$. By reparametrizing, we may assume that $\eta$ is parametrized on the interval $[0,2]$ so that $\eta(0)=p_0$, $\eta(1)=q$ and $\eta(2)=p_2$. If $\eta$ does not have superfluous intersections with $(e_1,\dots,e_n,\gamma_1)$ then we may set $\gamma_2=\eta$. Otherwise, since $\eta$ does not have superfluous intersections with $\gamma_1$ and the restrictions of $\eta$ to $[0,1]$ and to $[1,2]$ respectively do not have superfluous intersections with $(e_1,\dots, e_n)$, there must exist $s<1$ and $t>1$ such that $\eta(s)$ and $\eta(t)$ lie on a common edge $e_i$ with $q\notin |e_i|$. We may choose $s$ as the minimal one for which such $t$ and edge $e_i$ exists and choose $t$ maximal for the given value $s$. The curve $\gamma_2$ is obtained from $\eta$ by replacing $\eta|_{[ s,t]}$ with the respective portion of $e_i$. It remains to show that $\gamma_2$ does not have superfluous intersections with $(e_1, \dots, e_n, \gamma_1)$.

\begin{figure}
\begin{tikzpicture}[scale=2.8]
    \draw[thick,blue] (.5,1.4) to (1.5,1.1) to (1.5,0) to (.5,0) to (.5,1.4);
    \draw[thick,red] (.7,0) .. controls (.5,.2) .. (.5,.3) to (.5,.35) .. controls (.5,.5) and (1.5,.6) .. (1.5,.75) to (1.5,.8) .. controls (1.5,.9) .. (1.167,1.2);
    \draw[thick] (.7,0) .. controls (.8,.6) and (.8,1) .. (.8,1.1) .. controls (.8,1.6) .. (.5,1.6) .. controls (0.2,1.6) and (0.2,1.2) .. (.5,1.15);
    \filldraw[blue] (.5,1.4) circle (.5pt);
    \filldraw[blue] (1.5,1.1) circle (.5pt);
    \filldraw[blue] (1.5,0) circle (.5pt);
    \filldraw[blue] (.5,0) circle (.5pt);
    \filldraw[] (.7,0) circle (.5pt);
    \filldraw[] (.5,1.15) circle (.5pt);
    \filldraw[] (1.167,1.2) circle (.5pt);
    \filldraw[] (.7675,.4875) circle (.5pt);
    \node[blue] at (1.2,.15) {$P$};
    \node[] at (.725,.8) {$\widetilde{\eta}$};
    \node[red] at (1.15,.5) {$\gamma_1$};
    \node at (.81,.4) {$q$};
    \node at (.7,-.1) {$p_0$};
    \node at (1.2,1.275) {$p_1$};
    \node at (.6,1.15) {$p_2$};
    \end{tikzpicture}
    \caption{}
    \label{fig:polygon_superfluous}
\end{figure}

First of all, since $|\gamma_1\cap \eta|=\eta([0,1])$, we have 
\begin{equation}
\label{eq:setin}
\gamma_2([0,s])\subset |\gamma_2| \cap |\gamma_1| \subset \gamma_2([0,t]).
\end{equation}
However, $\gamma_2([s,t])$ is a subgeodesic of $e_i$ and hence, since $\gamma_1$ does not have superfluous intersections with $e_i$, it follows that $\gamma_2([ s,t])\cap |\gamma_1|$ is connected and hence, by \eqref{eq:setin}, that so is $|\gamma_2|\cap |\gamma_1|$. Next, we have by the minimal and maximal choice of $s$ and $t$ that $|\gamma_2| \cap |e_i|=\gamma_2([s,t])$ and hence that $|\gamma_2| \cap |e_i|$ is connected. 
Finally, let $j\neq i$. Since the edges of $P$ only intersect in their endpoints we must have $|e_j|\cap |\gamma_2| \subset |\gamma_2|\setminus \gamma_2((s,t))$. However, by the maximal and minimal choice of $s$ and $t$, we must indeed have that $|e_j|\cap |\gamma_2|$ is either contained in $\gamma_2([0,s])$ or in $\gamma_2([t,1])$. Since the restrictions of $\gamma_2$ to these subintervals do not have superfluous intersections with $e_j$, it follows also that $|e_j|\cap |\gamma_2|$ is connected.
\end{proof}

\begin{rem}
\label{rem:sup_big}
The conclusion of \Cref{lemm:sup2} is false whenever $n=2$ and $p_0,p_1$ are the two vertices of $P$. Nevertheless, the proof also shows that the result remains true for bigons~$P$ if we additionally require that none of the points $p_0,p_1,p_2$ is a vertex point of~$P$.
\end{rem}
In many situations, it is not necessary to achieve that $|\gamma_i|\cap |\gamma_j|$ is connected, but it suffices that the intersection $|\gamma_i|\cap |\gamma_j|$ has finitely many connected components. We say that a system $\Gamma=(\gamma_i)_{i=1}^k$ of geodesics is a \emph{finite graph} if $|\Gamma|:=|\gamma_1|\cup \dots \cup |\gamma_k|$ is a finite topological graph when endowed with the subspace topology. Equivalently, $(\gamma_i)_{i=1}^k$ is a finite graph if $|\gamma_i| \cap |\gamma_j|$ has only finitely many connected components for every $1\leq i,j \leq k$.
\begin{lemm}
\label{lemm:sup1}
Let $\Gamma=(\eta_i)_{i=1}^k$ be a finite graph and $\gamma^*$ an additional geodesic in $X$. Then there is a geodesic~$\gamma$ satisfying the following: 
\begin{enumerate}[label=(\roman*)]
\item $\gamma$ has the same endpoints as $\gamma^*$,
\item $\gamma \cup \Gamma$ is a finite graph, and
\item \label{i3} every connected component of $|\gamma| \setminus |\gamma^*|$ is contained in the image of some geodesic $\eta_i \in \Gamma$.
\end{enumerate}
\end{lemm}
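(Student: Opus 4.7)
The plan is to construct $\gamma$ by iteratively shortcutting $\gamma^*$ along each $\eta_i$. I produce a sequence of geodesics $\gamma_0=\gamma^*, \gamma_1,\dots,\gamma_k=\gamma$, maintaining the structural invariant that each $\gamma_i$ decomposes as a concatenation of \emph{$\gamma^*$-pieces} (subcurves of $\gamma^*$) and \emph{detours} (subgeodesics of some $\eta_l$ with $l\le i$), with both endpoints of every detour lying in $|\gamma^*|$.

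At step $i$, if $|\gamma_{i-1}|\cap |\eta_i|$ already has only finitely many connected components, set $\gamma_i=\gamma_{i-1}$. Otherwise, each detour of $\gamma_{i-1}$ is contained in some $|\eta_l|$ and hence meets $|\eta_i|$ in only finitely many components, since $\Gamma$ is a finite graph; therefore the infinite intersection with $|\eta_i|$ must occur inside the $\gamma^*$-pieces. Let $a_i$ and $b_i$ be the first and last parameters at which $\gamma_{i-1}$ both meets $|\eta_i|$ and lies within a $\gamma^*$-piece, so that $\gamma_{i-1}(a_i), \gamma_{i-1}(b_i)\in |\gamma^*|\cap |\eta_i|$. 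Form $\gamma_i$ by replacing the arc $\gamma_{i-1}|_{[a_i,b_i]}$ with the subgeodesic of $\eta_i$ between these two points. Because $\gamma_{i-1}$ is already a geodesic, the replaced arc has length $d(\gamma_{i-1}(a_i), \gamma_{i-1}(b_i))$, matching the length of the inserted subgeodesic, so $\gamma_i$ remains a geodesic with the same endpoints as $\gamma^*$, and the invariant is preserved.

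For condition~(ii), I verify inductively that $|\gamma_i|\cap |\eta_j|$ has only finitely many components for every $j$. The inserted subarc of $\eta_i$ contributes at most one component to $|\gamma_i|\cap|\eta_i|$ and finitely many to $|\gamma_i|\cap|\eta_j|$ for each $j\neq i$ (as it lies in $|\eta_i|\cap|\eta_j|$), while the retained parts of $\gamma_{i-1}$ are subarcs of a topological arc and hence their intersections with $|\eta_j|$ inherit finiteness from the inductive hypothesis.

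The hard point is condition~(iii); this is what forces the restriction of $a_i, b_i$ to $\gamma^*$-pieces rather than to arbitrary positions. Because $\gamma_k$ is simple (being a geodesic between distinct endpoints) and every detour has its endpoints in $|\gamma^*|$, any two distinct detours of $\gamma_k$ are separated in the subspace $|\gamma_k|\setminus |\gamma^*|$ either by an intervening $\gamma^*$-piece or, if they are adjacent, by their shared boundary point, which lies in $|\gamma^*|$. Consequently every connected component of $|\gamma_k|\setminus|\gamma^*|$ is contained in a single detour and hence in the image of a single $\eta_l$. The naive approach of taking $a_i, b_i$ to be the first and last hits on all of $\gamma_{i-1}$, without the $\gamma^*$-piece restriction, could produce chained detours from different $\eta_l$'s whose meeting point lies off $|\gamma^*|$, violating~(iii); restricting to $\gamma^*$-pieces is precisely what prevents this.
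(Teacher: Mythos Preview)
Your argument is correct and matches the paper's approach: both build $\gamma$ by processing the $\eta_i$ one at a time and, at step $i$, replace the arc of the current geodesic between the first and last points lying simultaneously in $|\gamma^*|$ and $|\eta_i|$ by the corresponding subarc of $\eta_i$; the paper phrases this as an induction on $k$ and takes the extreme points of $|\widehat\gamma|\cap|\gamma^*|\cap|\eta_k|$, which is marginally less restrictive than your requirement of lying in a $\gamma^*$-piece, but both choices work. One small presentational point on condition~(ii): the phrase ``inherit finiteness from the inductive hypothesis'' only handles $j<i$; for $j=i$ the retained parts meet $|\eta_i|$ in finitely many components not by induction but because your choice of $a_i,b_i$ forces the surviving $\gamma^*$-pieces to miss $|\eta_i|$, leaving only the detours, which meet $|\eta_i|$ finitely since $\Gamma$ is a finite graph---an observation you already made just above and need only cite here.
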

\begin{proof}
We prove the claim by induction on $k$. For the base case $k=0$, we simply set $\gamma=\gamma^*$. So assume the claim of the lemma holds for all  finite graphs comprising at most $k-1$ geodesics. Now let $\Gamma=(\eta_i)_{i=1}^k$ be a finite graph and set $\widehat{\Gamma}=(\eta_i)_{i=1}^{k-1}$. By the induction assumption, we can find a geodesic $\widehat{\gamma}$ with the same endpoints as~$\gamma^*$ such that $\widehat{\gamma}\cup \widehat{\Gamma}$ is a finite graph and each connected component of $|\widehat{\gamma}| \setminus |\gamma^*|$ is contained in the image a single geodesic $\eta_i\in \widehat{\Gamma}$.

If $|\widehat{\gamma}|\cap |\eta_k|$ has only finitely many connected components, then we may simply set $\gamma=\widehat{\gamma}$. So assume otherwise. Then, since $\eta_k \cup \widehat{\Gamma}$ and $\widehat{\gamma} \cup \widehat{\Gamma}$ are finite graphs, all except finitely many of these connected components are contained in $|\widehat{\gamma}| \setminus |\widehat{\Gamma}|\subset |\gamma^*|$. Let $l$ be the first point in $|\widehat{\gamma}|\cap |\gamma^*| \cap |\eta_k|$ and $r$ be the last one. Let $\gamma$ be the geodesic obtained from $\widehat{\gamma}$ upon replacing the portion between $l$ and $r$ by the respective portion of $\eta_k$. Then $\gamma$ has the same endpoints as $\widehat{\gamma}$, and hence also as $\gamma^*$. The intersection of $\gamma$ with each $\eta_i\in \Gamma$ has only finitely many connected components, since $\gamma$ is the composition of three curves, each having this property. In particular $\gamma \cup \Gamma$ is a finite graph. Finally, each connected component of $|\gamma| \setminus |\gamma^*|$ is either contained in $|\eta_k|$, or is equal to a connected component of $|\widehat{\gamma}| \setminus |\gamma^*|$ and thus contained in the image of a single geodesic $\eta_i\in \widehat{\Gamma}\subset \Gamma$. 
\end{proof}

\section{Proof of the main theorem}
\label{sec:proof}

We now proceed with the proof of \Cref{thm:maingen}, with the exception of the nondegeneracy conclusion \ref{item:non_degenerate}, which is postponed until \Cref{sec:degenerate_triangles}. Throughout this section, let $X$ be as in \Cref{thm:maingen}. That is, $X$ is a length surface with $\partial X$ composed of piecewise geodesic curves.
 \subsection{Covering by polygons of small diameter}
 \label{subsec:diamcov}
 
 We begin by showing that $X$ may be covered by polygons having arbitrarily small diameter.
\begin{lemm}[cf. \cite{AZ:67}, Lem. III.3, p.51]
\label{lemm1}
Let $x\in X$ and $\varepsilon >0$. Then there is a polygonal neighbourhood $P$ of~$x$ such that $\diam(P)\leq \varepsilon$ and $\partial P\setminus \{x\}$ consists of transit points.
\end{lemm}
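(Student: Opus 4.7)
The plan is to construct $P$ by approximating the boundary of a small disk neighbourhood of $x$ with a closed piecewise geodesic curve whose vertices are transit points, and then extracting from it a Jordan subcurve that bounds the desired polygon. I would handle the case $x\in X^\circ$ in detail; the case $x\in\partial X$ is a minor variant in which $x$ itself is allowed as a vertex.

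I would first fix nested disk neighbourhoods $W\subset V\subset U$ of $x$ with $x\in W^\circ$, $\diam(U)\leq \varepsilon$, and $\eta:=\min\{d(W,\partial V),\,d(V,\partial U\setminus\partial X)\}>0$. Using the density of transit points within the simple closed curve $\partial V$ (\Cref{lemm:trans}) together with compactness, I would pick cyclically ordered transit points $p_1,\dots,p_n\in\partial V$ (indices mod $n$) so that each subarc $\sigma_i$ of $\partial V$ from $p_i$ to $p_{i+1}$ has diameter at most $\eta/4$. For each $i$, any geodesic $\gamma_i$ from $p_i$ to $p_{i+1}$ then satisfies $\ell(\gamma_i)\leq\eta/4$ and hence $|\gamma_i|\subset B(p_i,\eta/4)$; after shrinking $\eta$ if necessary, each such ball is a disk neighbourhood contained in $U\setminus W$.

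Next, I would argue that the closed piecewise geodesic curve $c:=\gamma_1*\cdots*\gamma_n$ winds once around $x$ inside $U$. For each $i$ the loop $\sigma_i*\bar\gamma_i$ lies in the disk $B(p_i,\eta/2)\subset X\setminus\{x\}$ and is therefore null-homotopic in $X\setminus\{x\}$; concatenating these local null-homotopies produces a homotopy from $\partial V$ to $c$ inside $X\setminus\{x\}$, and since $\partial V$ winds once around $x$ in $U$, so does $c$.

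Finally, because $c$ is piecewise geodesic, $|c|$ is a finite embedded graph in $U$ that divides $U$ into finitely many open faces. Since $c$ winds around $x$, an innermost-curve argument — among all Jordan subcurves of $|c|$ enclosing $x$ in $U$, pick one bounding the smallest disk — would produce a simple closed piecewise geodesic subcurve $c'\subset|c|$ that bounds a disk $P$ containing $x$. Then $\diam(P)\leq \diam(U)\leq\varepsilon$, and the vertices of $c'$ are either original $p_i$'s (transit points by construction) or intersection points of distinct $\gamma_i$'s (interior points of geodesics, and hence transit points); thus $\partial P$ consists of transit points. For $x\in\partial X$, I would instead place $x$ as one of the $p_i$ and arrange $V$ so that $\partial V\cap \partial X$ is a subarc of the piecewise geodesic curve $\partial X$ through $x$; then $x$ is the allowed exception on $\partial P$, while the remaining vertices are transit points as before and interior points of $\partial X$-edges are transit points since $\partial X$ is piecewise geodesic. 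The main obstacle is the topological extraction of $c'$ from $|c|$ — ensuring that the innermost candidate actually bounds a topological disk with piecewise geodesic boundary — but this is handled by the finite planar graph structure of $|c|$ in the disk $U$ combined with Schoenflies.
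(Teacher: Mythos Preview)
Your overall strategy matches the paper's: enclose $x$ in a small disk $V$, place transit points on $\partial V$, connect them by short geodesics to form a closed piecewise geodesic curve $c$ homotopic to $\partial V$ in $U\setminus\{x\}$, and extract a Jordan subcurve from $|c|$. However, there is one genuine gap and one smaller sloppy step.

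The genuine gap is the assertion that ``because $c$ is piecewise geodesic, $|c|$ is a finite embedded graph in $U$.'' This is false in general: two geodesics in a length surface can intersect in a Cantor set or worse, so $|c|=\bigcup_i|\gamma_i|$ need not be a finite graph, and your innermost-curve extraction then has nothing to stand on. The paper deals with exactly this issue: after constructing the piecewise geodesic curve it invokes \Cref{lemm:sup1} to replace each edge $e_j$ by a geodesic with the same endpoints so that the resulting union \emph{is} a finite topological graph. Only then does it appeal to a classical result (Wilder, Thm.~IV.6.7) to find a Jordan subcurve separating $x$ from $\partial U$. You need some device of this kind; the finiteness does not come for free.

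The smaller issue is your null-homotopy step. You claim that after shrinking $\eta$ each ball $B(p_i,\eta/2)$ is a disk, hence the loop $\sigma_i*\bar\gamma_i$ is null-homotopic there. Small metric balls in a length surface need not be topological disks, and in any case the homotopy must take place in $U\setminus\{x\}$, not merely in $X\setminus\{x\}$, for the winding-number conclusion to follow. The paper avoids this by a separation argument: one chooses $\delta$ so small that no subset of $U$ of diameter $\leq\delta$ can separate a set of diameter $\geq \tfrac12 d(x,\partial V)$ from $\partial U$ (a compactness argument), and this forces each loop $\bar e_j*\eta_j$ to be contractible in $U\setminus\{x\}$. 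Your argument can be repaired along these lines, but as written it is not correct.
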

The idea of the proof is the following. We take a small disk neighbourhood $V$ of $x$ and then choose a sufficiently fine finite set of points in $\partial V$. Connecting each consecutive pair of these points by a geodesic gives a piecewise geodesic curve $\gamma$. If $\gamma$ is a Jordan curve, then $P$ is the disk bounded by $\gamma$. In general, however, we must carefully delete some portions of $\gamma$ to turn it into a Jordan curve. 
\begin{proof}
Let $U$ be a disk neighbourhood of $x$ such that $\diam(U)\leq \varepsilon$ and $U \cap \partial X$ contains no vertices of $\partial X$ except for possibly $x$ itself. Let $V\subset U$ be a disk neighbourhood of $x$ such that $d(V ,\partial U \setminus \partial X)>0$. 
 
Choose some $\delta$ satisfying $0<\delta< d(V,\partial U\setminus \partial X)$, to be determined later. Now take a finite collection of distinct points $\{y_j\}_{j=0}^m \subset \partial V$, labelled in cyclic order, such that $m\geq 2$ and $\diam (|\eta_j|)< \delta$ for all~$j$, where~$\eta_j$ denotes the arc of~$\partial V$ between~$y_j$ and~$y_{j+1}$. Furthermore, if $x\in \partial X$, we assume that $y_0=x$. Applying \Cref{lemm:trans}, by perturbing the points if needed, we can choose each $y_j$, except for possibly $x$, to be a transit point. Connect each $y_j$ to $y_{j+1}$ by a geodesic~$e_j$. Such~$e_j$ exists and is contained in~$U$ since $d(y_j,y_{j+1})<d(V,\partial U\setminus \partial X)$. Whenever $|\eta_j|\subset \partial X$, then $\eta_j $ is a geodesic and we choose $e_j=\eta_j$.
  Let $\gamma=e_0*\cdots*e_m$. Then $\gamma$ defines a closed piecewise geodesic curve that is contained in~$U$ and such that $| \gamma|\setminus \{x\}$ consists of transit points. By Lemma~\ref{lemm:sup1}, we may furthermore assume that $|\gamma|$ is a finite topological graph.
 
We now consider two cases. First, suppose that $x\in X\setminus \partial X$. In this case, set $\mu=1/2\cdot d(x,\partial V)>0$.  As noted in the proof of \cite[Lemma 9.4]{PS:19}, we may choose $0<\delta<\mu$ such that a compact subset of~$U$ of diameter at most~$\delta$ cannot separate a subset of~$U$ of diameter at least~$\mu$ from the boundary curve $\partial U$.  
To prove that such $\delta$ exists, we identify $U$ with the unit disk in $\mathbb{R}^2$ and denote by $\bar{\partial} E$ the boundary of the set $E\subset U$ as a subset of $\mathbb{R}^2$. Assume such $\delta$ does not exist; then we can find sequences $(E_i)_{i=1}^\infty$ and $(V_i)_{i=1}^\infty$ of compact subsets of $U$ such that $\diam E_i \to 0$, $\diam V_i \geq \mu$ and $\bar{\partial} V_i\subset E_i$. By the Blaschke selection theorem (see e.~g.\ \cite[Theorem 7.3.8]{BBI:01}), by passing to a subsequence we may assume without loss of generality that $(V_i)$ and $(E_i)$ respectively converge in the Hausdorff distance to compact subsets $V$ and $E$ of $U$. Since the diameter is continuous with respect to Hausdorff convergence, we must have that $E=\{p\}$ consists of a single point and that $\diam V\geq \mu$. On the other hand, the boundary is lower semicontinuous with respect to Hausdorff convergence and hence 
\[
\bar{\partial} V\subset \lim_{i\to \infty} \bar{\partial} V_i\subset \lim_{i\to \infty} E_i =\{p\}.
\]
This gives a contradiction, since the boundary of a compact planar set of positive diameter must have cardinality at least two.

For each $j \in \{0,\dots , m\}$, set $c_j=\overline{e}_j* \eta_j$. Then $\diam(|c_j|)\leq \delta$ and hence, since \[
d(x,|c_j|)\geq d(x,\partial V)-\diam (|\eta_j|)\geq 2\mu-\delta >\mu,
\] the curve $|c_j|$ cannot separate $x$ from $\partial U$. Thus $\eta_j$ is path homotopic to $e_j$ within $U\setminus \{x\}$, and hence $\partial V$ is path homotopic to $\gamma$ within $U\setminus \{x\}$. In particular, $|\gamma|$ must separate~$x$ from~$\partial U$. 
Since $|\gamma|$ is a finite topological graph, there is a Jordan curve $\widehat{\gamma}$ with $|\widehat{\gamma}| \subset |\gamma|$ also separating~$x$ from~$\partial U$; see e.g. \cite[Theorem~IV.6.7]{Wil:49}. The curve $\widehat{\gamma}$ must be piecewise geodesic curve as well. The claim follows by taking $P\subset U$ to be the disk bounded by $\widehat{\gamma}$.

Next, suppose that $x\in \partial X$. Denote by $\nu$ the connected component  of $\partial V\cap \partial X$ that contains $x$. In this case, let $\delta< d(x,\partial V\setminus \nu)$. Then each curve $e_j$ is either equal to $\eta_j$, or cannot pass through $x$. Hence the curve $\gamma$ passes through $x$ exactly once. Furthermore, note that $e_0=\eta_0$ and $e_m=\eta_m$. Again, by \cite[Theorem IV.6.7]{Wil:49} we may find a piecewise geodesic Jordan curve $\widehat{\gamma}$ such that $|\widehat{\gamma}|\subset |\gamma|$ and $|\widehat{\gamma}|$ contains a neighbourhood of $x$ within $\partial X$. The claim follows by taking $P\subset U$ to be the polygon corresponding to~$\widehat{\gamma}$.
\end{proof}
 \subsection{Covering by polygons of small perimeter}
 \label{subsec:covper}
The next step is to cover $X$ by polygons that have not only small diameter but also small perimeter. In the original proof of Alexandrov--Zalgaller for surfaces of bounded curvature, this is deduced from the fact that every point in such a surface has a polygonal neighbourhood of small perimeter; see \cite[Lem.~III.5, p.53]{AZ:67} or \cite[Lem.~6.3.3]{Res:93}. However, as noted in \cite{AZ:67} and \cite{Res:93}, this fact does not generalize to arbitrary metric surfaces. One possible counterexample is to take $X$ to be the quotient metric space obtained from the cylinder $\mathbb{S}^1 \times [0,1]$ by collapsing one of its boundary circles to a single point. Instead, we show the following lemma, which is the main novel ingredient of our proof.
\begin{lemm}
\label{lemm3}
Let $x\in X$ and $\varepsilon >0$. Then there is a neighbourhood $U$ of $x$ such that $\diam(U)\leq \varepsilon$ and $U=T_1\cup \cdots \cup T_n$ where each $T_i$ is a triangle such that $\partial T_i\setminus \partial X$ consists of transit points.
\end{lemm}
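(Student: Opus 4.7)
The plan is to follow the construction in the proof of Lemma~\ref{lemm1} to build a polygonal neighbourhood of $x$, and then to triangulate it via a star-shaped subdivision emanating from $x$, using the machinery of Section~\ref{sec:sup} to control superfluous intersections.

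Suppose first that $x \in X \setminus \partial X$; the boundary case is analogous. Following the proof of Lemma~\ref{lemm1}, fix nested disks $V \subset U$ around $x$ with $\diam(U) \le \varepsilon$ and $d(V, \partial U \setminus \partial X)$ sufficiently large compared to $\diam(V)$. Choose a fine cyclic family of transit points $y_0, \ldots, y_m \in \partial V$ such that each arc $\eta_j$ of $\partial V$ between consecutive $y_j, y_{j+1}$ has diameter less than a suitably small $\delta$, let $e_j$ be a geodesic from $y_j$ to $y_{j+1}$, and additionally choose a geodesic $f_j$ from $x$ to each $y_j$. By iterated application of Lemma~\ref{lemm:sup1} to the full system, we may arrange that the union $G := \bigcup_j (|e_j| \cup |f_j|)$ is a finite topological graph inside $U$. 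As in the proof of Lemma~\ref{lemm1}, the piecewise geodesic curve $e_0 * \cdots * e_m$ separates $x$ from $\partial U$, so we may extract a Jordan curve $\widehat{\gamma}$ bounding a polygonal disk $P \ni x$ with $\diam(P) \le \varepsilon$ and $\partial P$ composed of transit points; the $f_j$'s form a fan inside $P$ from $x$ to the boundary vertices, partitioning $P$ into complementary faces.

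The key step is to refine the choice of the $f_j$'s so that each such face is a triangle with geodesic sides $f_j, e_j, \bar f_{j+1}$. For each pair of consecutive boundary vertices $y_j, y_{j+1}$, apply Lemma~\ref{lemm:sup2} to the polygon $P$ with edges $(e_0, \ldots, e_m)$ and distinguished points $p_0 = x$, $p_1 = y_j$, $p_2 = y_{j+1}$ to obtain $f_j, f_{j+1}$ having no superfluous intersections with the edges of $\partial P$ or with each other. The closed curve $f_j * e_j * \bar f_{j+1}$ then bounds a topological disk $T_j$ whose boundary is the union of three geodesic arcs, possibly degenerating to a bigon when $f_j$ and $f_{j+1}$ initially coincide; this is still admissible under the definition of a triangle. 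Interior points of each $f_j$ are automatically transit points, so $\partial T_j \setminus \partial X$ consists of transit points.

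The main obstacle is that Lemma~\ref{lemm:sup2} controls only two geodesics from a common basepoint simultaneously, whereas we need to choose $m+1$ such geodesics coherently; Remark~\ref{rem:sup_big} explicitly warns that the three-geodesic analogue fails. We resolve this by building the $f_j$'s sequentially: at each stage, Lemma~\ref{lemm:sup2} is applied to the new pair $(f_j, f_{j+1})$ relative to $\partial P$, while Lemma~\ref{lemm:sup1} preserves the finite-graph property with respect to all previously fixed geodesics. A careful bookkeeping argument tracking the planar structure of $G \cap P$ at each edge $e_j$ then ensures that the faces $T_j$ are genuine triangles covering $P$. The case $x \in \partial X$ is handled analogously, taking $y_0 = x$ as a boundary vertex and absorbing the two adjacent boundary geodesics into $\partial X$ rather than drawing $f_0$ and $f_m$.
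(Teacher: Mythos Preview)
Your approach is essentially the paper's own---a fan of geodesics from a single basepoint, with \Cref{lemm:sup2} controlling each consecutive pair---but the execution has two genuine gaps that the paper's version avoids.

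First, the choice of basepoint. You fan out from the interior point $x$, whereas the paper fans out from a \emph{vertex} $v_0$ of $\partial P$. This is not cosmetic. With $v_0$ as apex, the first and last rays of the fan are forced to equal the boundary edges $e_0$ and $e_{n+1}$, so no ``closing triangle'' is ever needed: the curves $c_1,\dots,c_n$ already concatenate (up to path homotopy) to $\partial P$. With your apex $x$, the sequential use of \Cref{lemm:sup2} controls $(f_j,f_{j+1})$ for $j=0,\dots,m-1$ but says nothing about the pair $(f_m,f_0)$. As you yourself note via \Cref{rem:sup_big}, one cannot go back and fix this without potentially destroying the control on $(f_{m-1},f_m)$. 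So the last closed curve $f_m*e_m*\bar f_0$ may have uncontrolled self-intersection and need not bound a single triangle.

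Second, the covering argument. Your ``careful bookkeeping tracking the planar structure'' is not an argument; the $T_j$ are generally overlapping and do not partition $P$ into faces of a planar graph in any evident way. The paper instead argues via winding number: for any $y\in P^\circ\setminus\bigcup|\gamma_i|$, the boundary $\partial P$ winds around $y$, and since $\partial P$ is homotopic in $V\setminus\{y\}$ to $c_1*\cdots*c_n$, additivity forces some $c_i$ to wind around $y$, hence $y\in T_i$. This is what actually establishes $P\subset\bigcup T_i$.

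A smaller point: even with \Cref{lemm:sup2} in hand, the closed curve $c_j=f_j*e_j*\bar f_{j+1}$ need not be a Jordan curve. It can have inward- or outward-pointing ends, or degenerate to a tree with no interior at all (see the paper's Figure~\ref{fig:triangle_types}); your remark about ``possibly degenerating to a bigon'' does not cover these cases. The paper handles this by defining $T_i$ as the disk bounded by the Jordan subcurve of $c_i$ after deleting such ends, and simply setting $T_i=\emptyset$ when no inner component exists.
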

Note that each triangle $T_i$ has perimeter at most $3\varepsilon$ and that we do not require the triangles to be non-overlapping.

\begin{proof}
Let $V$ be a disk neighbourhood of $x$ such that $\diam(V)\leq \varepsilon$. By Lemma~\ref{lemm1}, there exists a polygonal neighbourhood $P$ of $x$ such that $\diam (P)< d(P,\partial V\setminus \partial X)$ and $\partial P \setminus \{x\}$ consists of transit points. Note that $x\in \partial P$ only when $x\in \partial X$. Let $\partial P= e_0*\cdots *e_{n+1}$ be a representation of $\partial P$ as a piecewise geodesic curve, and let $v_0, \dots, v_{n+1}$ be the corresponding vertices of~$P$, where $v_i$ is the initial point of~$e_i$. By iterated application of \Cref{lemm:sup2}, we may choose for each $i=1,\dots, n+1$ a geodesic $\gamma_i$ from $v_0$ to~$v_i$ such that $(e_0,\dots, e_{n+1},\gamma_i,\gamma_{i+1})$ does not have superfluous intersections whenever $i\leq n$. Note that necessarily $\gamma_1=e_0$ and $\gamma_{n+1}=e_{n+1}$. For each $i=1,\dots,n$, set $c_i=\gamma_i*e_i* \bar{\gamma}_{i+1}$. Thus the image of $c_i$ comprises three geodesics whose complement in $V$ consists of an outer component containing $\partial V \setminus \partial X$ and at most one inner component. All possible topological types of the closed curve $c_i$ are shown in \Cref{fig:triangle_types}; see also Figure 17 on page 51 of \cite{AZ:67}. If $|c_i|$ bounds an inner component, let $T_i\subset V$ be the triangle bounded by the Jordan curve obtained by deleting the inward- and outward-pointing ends of $c_i$ when such exist. Otherwise, we set $T_i = \emptyset$. We now set $U = T_1 \cup \cdots \cup T_{n}$. Note that $\diam(U)\leq \varepsilon$, since $U\subset V$. 

\begin{figure} 
    \centering
    \begin{tikzpicture}[scale=1.4]
    \draw[black,thick] (0,1.4) to (1,1.4) to (.5,2.1) to (0,1.4);
    \filldraw [black] (0,1.4) circle (.7pt);
    \filldraw [black] (1,1.4) circle (.7pt);
    \filldraw [black] (.5,2.1) circle (.7pt);
    \draw[black,thick] (1.5,1.4) to (2.5,1.4) to (2,1.9) to (2,2.2) to (2,1.9) to (1.5,1.4);
    \filldraw [black] (1.5,1.4) circle (.7pt);
    \filldraw [black] (2.5,1.4) circle (.7pt);
    \filldraw [black] (2,2.2) circle (.7pt);
    \draw[black,thick] (2.9,1.3) to (3.2,1.5) to (3.8,1.5) to (4.1,1.3) to (3.8,1.5) to (3.5,2.1) to (3.2,1.5);
    \filldraw [black] (2.9,1.3) circle (.7pt);
    \filldraw [black] (4.1,1.3) circle (.7pt);
    \filldraw [black] (3.5,2.1) circle (.7pt);
    \draw[black,thick] (4.4,1.3) to (4.7,1.5) to (5.3,1.5) to (5.6,1.3) to (5.3,1.5) to (5,1.9) to (5,2.2) to (5,1.9) to (4.7,1.5);
    \filldraw [black] (4.4,1.3) circle (.7pt);
    \filldraw [black] (5.6,1.3) circle (.7pt);
    \filldraw [black] (5,2.2) circle (.7pt);
    \draw[black,thick] (6,1.3) to (6.5,1.7) to (7,1.3) to (6.5,1.7) to (6.5,2.2);
    \filldraw [black] (6,1.3) circle (.7pt);
    \filldraw [black] (7,1.3) circle (.7pt);
    \filldraw [black] (6.5,2.2) circle (.7pt);
    \draw[black,thick] (8,1.3) to (8,2.3);
    \filldraw [black] (8,1.3) circle (.7pt);
    \filldraw [black] (8,1.8) circle (.7pt);
    \filldraw [black] (8,2.3) circle (.7pt);
    \draw[black,thick] (0,0) to (1,0) to (.5,.7) to (.5,.3) to (.5,.7) to (0,0);
    \filldraw [black] (0,0) circle (.7pt);
    \filldraw [black] (1,0) circle (.7pt);
    \filldraw [black] (.5,.3) circle (.7pt);
    \draw[black,thick] (1.9,.2) to (1.5,0) to (2.5,0) to (2.1,.2) to (2.5,0) to (2,.7) to (1.5,0);
    \filldraw [black] (1.9,.2) circle (.7pt);
    \filldraw [black] (2.1,.2) circle (.7pt);
    \filldraw [black] (2,.7) circle (.7pt);
    \draw[black,thick] (3.4,.2) to (3,0) to (4,0) to (3.6,.2) to (4,0) to (3.5,.75) to (3.5,.35) to (3.5,.75) to (3,0);
    \filldraw [black] (3.4,.2) circle (.7pt);
    \filldraw [black] (3.6,.2) circle (.7pt);
    \filldraw [black] (3.5,.35) circle (.7pt);
    \draw[black,thick] (4.9,.15) to (4.5,0) to (5.5,0) to (5,.5) to (5,.8) to (5,.5) to (4.5,0);
    \filldraw [black] (4.9,.15) circle (.7pt);
    \filldraw [black] (5.5,0) circle (.7pt);
    \filldraw [black] (5,.8) circle (.7pt);
    \draw[black,thick] (5.9,-.1) to (6.2,.1) to (6.8,.1) to (7.1,-.1) to (6.8,.1) to (6.5,.7) to (6.5,.3) to (6.5,.7) to (6.2,.1);
    \filldraw [black] (5.9,-.1) circle (.7pt);
    \filldraw [black] (7.1,-.1) circle (.7pt);
    \filldraw [black] (6.5,.3) circle (.7pt);
    \draw[black,thick] (7.9,.2) to (7.5,0) to (8.5,0) to (8.1,.2) to (8.5,0) to (8,.5) to (8,.8) to (8,.5) to (7.5,0);
    \filldraw [black] (7.9,.2) circle (.7pt);
    \filldraw [black] (8.1,.2) circle (.7pt);
    \filldraw [black] (8,.8) circle (.7pt);
\end{tikzpicture}
    \caption{
    }
    \label{fig:triangle_types}
\end{figure}

It remains to check that $U$ is a neighbourhood of $x$. To do so, it suffices to show that $P\subset U$. Let $O=P^\circ \setminus \bigcup_{i=1}^{n+1} |\gamma_i|$. Then $O$ is a dense subset of $P$. Since the triangles are compact, the claim reduces to proving that $O\subset T_1 \cup \cdots \cup T_{n}$. To this end, consider a point $y\in O$. First, observe that $\partial P$ winds around $y$ within $U$. However, $\partial P= e_0*\cdots*e_{n+1}$ is path homotopic to 
\[e_0 *e_1 *\bar{\gamma}_2*\gamma_2 * e_2* \bar{\gamma}_3 *\gamma_3*e_3* \cdots *e_{n-1}* \bar{\gamma}_{n}*\gamma_{n}* e_{n}*e_{n+1}\]
within $V\setminus \{y\}$, which is in turn equal to $c_1*\cdots *c_n$. Thus, by additivity of the winding number, there must be some $i$ such that $c_i$ winds around $y$ within $U$. This implies that $y\in T_i$. 
\end{proof}

Note that, even for surfaces of bounded curvature, the proof of \Cref{lemm3} is shorter and conceptually simpler than that of \cite[Lem.~III.5, p.53]{AZ:67}.
\subsection{Covering by absolutely convex polygons}
\label{subsec:covabs}
At this point, \Cref{lemm3} gives a cover of $X$ by polygons of small perimeter. In this section, we improve this to a cover by absolutely convex polygons. 
\begin{prop}
\label{prop:abs}
Let $\varepsilon >0$. Then $X$ may be covered by a locally finite collection of absolutely convex polygons $(P_i)_{i\in \mathcal{I}}$ such that $\diam(P_i) \leq \varepsilon$ and $\partial P_i \setminus \partial X$ consists of transit points for each $i \in \mathcal{I}$.
\end{prop}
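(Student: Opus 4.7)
The plan is to establish the local statement that every $x \in X$ admits an absolutely convex polygonal neighborhood $P$ with $\diam(P) \leq \varepsilon$ and $\partial P \setminus \partial X$ consisting of transit points; the passage to a locally finite global cover then follows from paracompactness of $X$. Concretely, one exhausts $X$ by compact sets $K_1 \subset K_2 \subset \cdots$ with $K_n \subset K_{n+1}^\circ$ and $\bigcup K_n = X$. For each compact annular piece $K_n \setminus K_{n-1}^\circ$, apply the local statement with the additional requirement that each polygon lies in $K_{n+1} \setminus K_{n-2}^\circ$ (achievable by shrinking the ambient disks chosen below), and extract a finite subcover by compactness. The resulting union over $n$ is locally finite.

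For the local construction at a point $x \in X$, I would first fix an ambient disk $V \ni x$ with $\diam(V) \leq \min(\varepsilon, \diam(X)/3)$, to serve as the ambient disk for the boundary convexity condition. Inside $V$ choose a smaller disk $U$ with $d(U, \partial V \setminus \partial X)$ much larger than $\diam(U)$. Apply \Cref{lemm3} inside $U$ to obtain a polygonal neighborhood $Q$ of $x$, expressed as a union of triangles of very small perimeter, with $\partial Q \setminus \partial X$ consisting of transit points. In particular $\ell(\partial Q)$ is so small that $d(Q, \partial V \setminus \partial X) > 4 \ell(\partial Q)$ with a generous slack.

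Following the strategy of \cite{AZ:67}, the polygon $Q$ is then enlarged within $V$ to an absolutely convex polygon $P \supset Q$. The idea is to iteratively detect any geodesic segment in $V$ joining two boundary points of the current polygon that strays outside it and to modify the polygon by attaching the subregion bounded between this geodesic and the corresponding arc of the boundary, while replacing that arc by the geodesic. An application of \Cref{lemm:sup1} at each step ensures that the newly adjoined geodesics form a finite graph together with the existing boundary, so the modified region remains a polygon with finitely many edges. The boundary-convexity preservation statements \Cref{lemm:intersect} (for intersections) and \Cref{lemm:part} (for subdivisions by geodesics) guarantee that boundary convexity is maintained through each modification, and each newly inserted edge consists of transit points as an interior arc of a geodesic.

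The main obstacle is proving that this enlargement terminates in finitely many steps at a polygon that still lies in $V$ and is not merely boundary convex but also completely convex. Termination rests on two observations: each modification strictly decreases the perimeter (since a geodesic is strictly shorter than the arc it replaces), and the finite graph condition from \Cref{lemm:sup1} bounds the combinatorial complexity at each stage. Containment in $V$ is secured by the extreme smallness of $\ell(\partial Q)$: every geodesic with endpoints in the current polygon has length at most a small multiple of $\ell(\partial Q)$, hence stays well inside $V$ by the choice of separation $d(Q, \partial V \setminus \partial X)$. Complete convexity holds essentially by construction, since any geodesic between interior points that exited $P$ would trigger a further modification, contradicting termination. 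Finally, the transit-point property of $\partial P \setminus \partial X$ carries over from $\partial Q \setminus \partial X$ on retained arcs and holds automatically along the inserted geodesic subarcs.
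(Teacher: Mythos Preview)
Your local claim---that every point $x$ admits a single absolutely convex polygonal neighborhood of small diameter---is false in general, and this is precisely the obstacle the paper is designed to work around. Consider the example discussed just before \Cref{lemm3}: the quotient of $\mathbb{S}^1\times[0,1]$ by collapsing one boundary circle to a point $p$. Any Jordan curve separating $p$ from the other end of the cylinder has length at least $2\pi$, so no polygonal neighborhood of $p$ has small perimeter, and in particular no absolutely convex polygonal neighborhood of $p$ (of small diameter) exists. Your sentence ``In particular $\ell(\partial Q)$ is so small\ldots'' is exactly where this breaks: \Cref{lemm3} gives a neighborhood $V_x$ that is a \emph{union} of triangles $T_x^1,\dots,T_x^{n_x}$, each of small perimeter, but the union need not be a disk and, even if it is, its boundary need not be short. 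The paper's proof therefore applies \Cref{lem:Abs} to each triangle $T_x^i$ separately, obtaining several absolutely convex polygons $P_x^i$ whose union covers a neighborhood of $x$; no single $P_x^i$ need be a neighborhood of $x$.

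There is a second gap in your enlargement step. Even granting a small-perimeter starting polygon, your termination argument does not work: strictly decreasing perimeters together with finite combinatorics at each stage do not rule out an infinite sequence of modifications with perimeters converging to a limit. The paper handles this by a non-iterative compactness argument (\Cref{lem:Abs}): take $Q_c$ for a shortest closed curve $c$ winding around $P^\circ$ (existence via Arzel\`a--Ascoli), show $c$ is piecewise geodesic and $Q_c$ is boundary convex, and then pick a $\preceq$-maximal such $c$ via Zorn's lemma to force complete convexity. Also, \Cref{lemm:intersect} and \Cref{lemm:part} concern intersections and subdivisions, not enlargements, so they do not justify preservation of boundary convexity under your attachment operation.
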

The proof relies on \Cref{lemm3} and the following lemma.

\begin{lemm}[cf. \cite{AZ:67}, Lemma III.4, p.51]
\label{lem:Abs}
Let $x\in X$ and $\varepsilon >0$. Then there is $\delta>0$ such that any polygon~$P$ with $x \in P$, $\diam(P)\leq \delta$ and $\ell(\partial P)\leq \delta$ is contained in an absolutely convex polygon $Q$ such that $\diam(Q) \leq \varepsilon$ and $\partial Q\setminus \partial P$ consists of transit points.
\end{lemm}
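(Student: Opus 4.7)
The plan is to construct $Q$ by an iterative enlargement of $P$ inside a suitable ambient disk $U$ about $x$, carried out in two phases: first enlarge $P$ to a boundary-convex polygon $\widetilde{P}$ by adjoining exterior shortcuts, and then enlarge $\widetilde{P}$ to an absolutely convex polygon $Q$ by adjoining geodesic caps whenever a geodesic between two points of the current polygon exits it. For the setup, I would fix a disk $U\ni x$ with $\diam(U)\leq\min\{\varepsilon,\diam(X)/3\}$ and $D:=d(x,\partial U\setminus\partial X)>0$, and then choose $\delta>0$ small enough that any polygon $P$ with $x\in P$ and $\ell(\partial P)\leq\delta$ is contained in a disk $V\subset U$ with $d(V,\partial U\setminus\partial X)>100\delta$. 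The existence of such $\delta$ uses the local-topology argument from the proof of \Cref{lemm1}: sufficiently short Jordan curves through a small neighborhood of $x$ bound disks of small diameter.

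For the boundary-convex enlargement, given the current polygon $P_k$, if there exists a subarc $\gamma$ of $\partial P_k$ together with a strictly shorter curve $\eta\subset U\setminus P_k^\circ$ path homotopic to $\gamma$ within $U\setminus P_k^\circ$, I would choose $\eta$ (via \Cref{lemm:sup1} and Arzel\`a--Ascoli) to be a concatenation of finitely many subgeodesics whose union with $\partial P_k$ is a finite graph, and adjoin to $P_k$ the closure of the region bounded by $\gamma$ and $\eta$. After the process stops one obtains a boundary-convex polygon $\widetilde P$ with $\ell(\partial\widetilde P)\leq\ell(\partial P)$. For the absolute-convex enlargement, starting from $\widetilde P_0:=\widetilde P$, if some geodesic $\gamma$ in $X$ with endpoints in $\widetilde P_k$ has image not contained in $\widetilde P_k$, invoke \Cref{lemm:sup1} to replace $\gamma$ by a geodesic $\gamma'$ with the same endpoints such that $\gamma'\cup\partial\widetilde P_k$ is a finite graph, and form $\widetilde P_{k+1}$ by adjoining to $\widetilde P_k$ the closures of those components of $U\setminus(\widetilde P_k\cup|\gamma'|)$ that do not meet $\partial U\setminus\partial X$. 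The new boundary edges are subarcs of adjoined geodesics, whose interior points are transit points, and whose finitely many endpoints can be made transit points by a perturbation using \Cref{lemm:trans}.

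The main invariant to maintain throughout is that each intermediate polygon is boundary convex with respect to $U$ and has perimeter at most $\ell(\partial P)\leq\delta$, so by the choice of $\delta$ it lies deep inside $U$ and has diameter at most $\varepsilon$. The perimeter bound follows because each added geodesic arc is path homotopic within $U$ minus the current interior to the boundary arc it replaces, and boundary convexity forces the geodesic arc to be no longer. Preservation of boundary convexity is verified by arguments in the spirit of \Cref{lemm:intersect} and \Cref{lemm:part}. The main obstacle will be to show that both iterations terminate in finitely many steps. My approach would be to argue that each nontrivial enlargement either strictly reduces the perimeter by a definite amount, or keeps the perimeter the same while strictly reducing the number of edges of the reduced polygon; together with the uniform perimeter bound and compactness of $U$, this precludes an infinite iteration. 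Once stabilized, the resulting polygon $Q$ is completely convex by the stopping condition and boundary convex by the invariant, hence absolutely convex, and $\partial Q\setminus\partial P$ consists of transit points.
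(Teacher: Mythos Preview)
Your overall strategy --- enlarge $P$ inside a small ambient disk $U$ until the result is both boundary convex and completely convex --- is the right idea, and your setup of $U$ and $\delta$ is essentially what the paper does. However, the iterative mechanism you propose has a genuine gap at exactly the place you flag: termination.

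Your proposed dichotomy, that each step ``either strictly reduces the perimeter by a definite amount, or keeps the perimeter the same while strictly reducing the number of edges of the reduced polygon,'' is not justified and is unlikely to hold in this generality. In phase~1, each shortcut strictly decreases the perimeter, but nothing prevents the decrements from forming a summable sequence like $2^{-k}$, so the process need not stop after finitely many steps. In phase~2, adjoining a geodesic cap may leave the perimeter unchanged (boundary convexity only gives $\leq$, not $<$), and it can \emph{increase} the number of edges: the geodesic $\gamma'$ typically enters and exits $\widetilde P_k$ several times, so the new boundary acquires new vertices at each crossing. Compactness of $U$ does not rescue this, since a nested increasing sequence of polygons with bounded perimeter can perfectly well be infinite. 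A secondary issue is that preservation of boundary convexity under adjoining caps in phase~2 is asserted ``in the spirit of'' \Cref{lemm:intersect} and \Cref{lemm:part}, but neither of those lemmas applies directly to a union with a cap, and this step would need its own argument.

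The paper sidesteps both problems by replacing your two iterations with two one-shot limiting arguments. For boundary convexity, it takes directly a \emph{shortest} closed curve $c$ in $U$ winding around $P^\circ$, obtained by Arzel\`a--Ascoli, lower semicontinuity of length, and continuity of the winding number; minimality immediately forces condition~(\ref{item:boundary_convexity_2}), and a local analysis shows $c$ is piecewise geodesic with $\partial Q_c\setminus\partial P$ consisting of transit points. For complete convexity, it applies Zorn's lemma to the family of all such shortest curves, ordered by inclusion of the enclosed disks; a maximal element exists because chains subconverge (again by Arzel\`a--Ascoli), and maximality plus the perimeter bound forces complete convexity. In effect, the paper passes to the limit once rather than iterating, which is precisely what makes termination a non-issue.
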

The idea of the proof is to take $Q$ to be a polygon of least perimeter among all polygons containing $P$ and contained in some fixed ambient disk $U$. This guarantees the boundary convexity of $Q$. Complete convexity is achieved by taking, among all polygons of least perimeter, the maximal one with respect to set inclusion.
\begin{proof}[Proof of \Cref{lem:Abs}]
We assume without loss of generality that $\varepsilon \leq \diam(X)/3$. Let $U$ be a disk neighbourhood of $x$ such that $\diam(U)\leq \varepsilon$ and set \[\zeta=d(x,\partial U\setminus \partial X)/5 >0.\] 
Let $0<\delta <\zeta$ be such that any Jordan curve $c$ in $U$ of length at most~$\delta$ which either winds around or passes through~$x$ is contained in $B(x,\zeta)$. 
Such a value $\delta$ must exist, since otherwise there would be a point separating $B(x,\zeta/2)$ from $\partial U \setminus \partial X$. Compare also the respective step in the proof of \Cref{lemm1}. 

Now let $P$ be a polygon with $x\in P$ such that $\ell(\partial P)\leq \delta$  and $\diam(P) \leq \delta$. Then by our choice of~$\delta$ and the diameter bound of $P$ we have $P\subset U$. By the Arzel\`a--Ascoli theorem, lower semicontinuity of length and continuity of the winding number, there exists a  closed curve $c$ that is shortest among all curves that wind around~$P^\circ$ within~$U$. Denote by $\mathcal{C}$ the collection of all such shortest curves $c$. By the additivity of the winding number, every $c\in \mathcal{C}$ must be a Jordan curve, and we denote its enclosed disk within~$U$ by~$Q_c$. 
Then $Q_c\subset U$ is a disk which contains $P$.

Next, we show that $Q_c$ is boundary convex with respect to $U$. Certainly $U$ is of diameter at most $\diam(X)/3$. Furthermore, since $\ell(\partial Q_c)\leq \delta$, we have that $\partial Q_c$ is contained in $B(x, \zeta)$ and hence that
\begin{equation}
\label{eq:xx}
d(Q_c,\partial U\setminus \partial X)\geq d(x,\partial U\setminus \partial X)-\zeta=4\zeta >4 \delta \geq 4 \cdot \ell(\partial P) \geq 4\cdot \ell(\partial Q_c).
\end{equation}
Now let $\gamma$ be a proper subcurve of $c$ and $\eta\subset U\setminus Q^\circ_c$ be a simple curve which is path homotopic to $\gamma$ within $U\setminus Q^\circ_c$. Then we must have that $\ell(\gamma)\leq \ell(\eta)$. Otherwise, the curve obtained from $c$ when replacing $\gamma$ with $\eta$ would be shorter than $c$ and non-contractible within $U\setminus P^\circ$. This would contradict to the assumption that $c\in \mathcal{C}$. We conclude that $Q_c$ is boundary convex with respect to~$U$.

We claim that every $c\in \mathcal{C}$ is piecewise geodesic and hence that the corresponding disk $Q_c$ is a polygon. Since $c$ is a compact curve, it suffices to show that $c$ is locally piecewise geodesic. By symmetry, we may furthermore assume that $c$ is parametrized by arc length on the interval $[-l,l]$ and only show that $c$ is locally piecewise geodesic at $0$. 
First, assume that $c (0)\in U\setminus \partial P$. In this case, the restriction of $c$ to $[-\mu,\mu]$ is a geodesic, where
\[
\mu=\frac{1}{8}\cdot \min \{ d(c(0),\partial P), d(c(0),\partial U\setminus \partial X),l\}>0.
\]
Otherwise since every geodesic from  $c(-\mu)$ to $c(\mu)$ must be contained in $U\setminus P^\circ$, we could  shorten $c$ within the admissible class by replacing one of the arcs of $c$ by the geodesic. Next, if $c(0)\in \partial P$, then let $\nu$ be the union of the (at most two) edges of $\partial P$ which contain $c(0)$. Then the restrictions of $c$ to $[-\mu,0]$ and $[0,\mu]$ are geodesics, where \[\mu=\frac{1}{2} \cdot \min \{d(c(0),\partial P \setminus \nu), d(c(0),\partial U\setminus \partial X), l\}>0.\] Namely, by our choice of $\mu$, every geodesic $\gamma$ from $c(0)$ to $c(-\mu)$ (respectively, to $c(\mu)$) is contained in $U$ and can intersect $\partial P$ only in $\nu$. Thus, by replacing a subcurve of $\gamma$ with a subarc of $P$, we can also find a geodesic $\eta$ from $c(0)$ to $c(-\mu)$ (respectively, to $c(\mu)$) which is contained in $U\setminus P^\circ$. Now, if the restriction of $c$ to $[-\mu,0]$ (respectively, to $[0,\mu]$) were not a geodesic, then we could, as before, shorten $c$ within the admissible class by replacing one of its arcs with $\eta$. Note that our argument also shows that $\partial Q_c \setminus \partial P$ consists of transit points.

It remains to show that we can find $c\in \mathcal{C}$ such that $Q_c$ is completely convex. We find such a curve $c$ as an application of Zorn's lemma. To do this, we introduce the partial ordering $\preceq$ on $\mathcal{C}$  defined by $c_1\preceq c_2$ whenever $Q_{c_1}^\circ\subset Q_{c_2}^\circ$. To apply Zorn's lemma, we must show that every chain $C\subset \mathcal{C}$ has an upper bound. Since $X$ is a second countable space, we may assume that $C = (c_i)_{i=1}^\infty$ with $c_1 \preceq c_2 \preceq c_3 \preceq \cdots$; see e.g.~\cite[Thm.~30.3]{Mun:00}. Then, by the Arzel\`a--Ascoli theorem, lower semicontinuity of length and continuity of the winding number, the sequence $(c_i)_{i=1}^\infty$ subconverges to some $c\in \mathcal{C}$, which must certainly be an upper bound for $C$ with respect to~$\preceq$. We conclude by Zorn's lemma that there is a maximal element $c_m\in \mathcal{C}$ with respect to~$\preceq$. We claim that $Q=Q_{c_m}$ is completely convex. Otherwise, there would be a geodesic $\gamma$ with endpoints in $Q$ which is not entirely contained in $Q$. By \eqref{eq:xx}, we would have $|\gamma|\subset U$ and, by passing to a subgeodesic, we could assume that $\gamma$ intersects $Q$ only in its endpoints. However, then we could enlarge $Q$ by replacing one of the arcs of $\partial Q$ with~$\gamma$. This would contradict the maximality of $c_m$.
\end{proof}

\begin{proof}[Proof of \Cref{prop:abs}]
We first assume that $X$ is compact. For each $x\in X$, choose~$\delta_x>0$ as in \Cref{lem:Abs} according to $x$ and $\varepsilon$. By \Cref{lemm3}, there is a neighbourhood $V_x$ of $x$ such that $\diam(V_x)\leq \delta_x/3$ and $V_x=T_x^1 \cup \cdots \cup T_x^{n_x}$, where the $T_x^i$ are triangles such that $\partial T_x^i \setminus \partial X$ consists of transit points. By deleting some of the triangles if necessary, we may assume that each $T_x^i$ contains $x$. Then, by \Cref{lem:Abs}, each triangle $T_x^i$ is contained in an absolutely convex polygon $P_x^i$  such that $\diam(P_x^i)\leq \varepsilon$ and $\partial P_x^i \setminus \partial X$ consists of transit points. Since $X$ is compact and the interiors of the neighbourhoods $V_x$ cover $X$, by choosing a subcollection of the $P_x^i$, we find our desired finite cover $(P_i)_{i=1}^k$.

If $X$ is non-compact, then we find a sequence $(\Omega_j)_{j=1}^\infty$ of relatively compact open sets such that $X=\bigcup_{j=1}^\infty \Omega_j$ and $\overline{\Omega}_j \subset \Omega_{j+1}$ for each $j$. For each $x \in X$, choose $0 < \varepsilon_x \leq \varepsilon$ so that $\overline{B}(x,\varepsilon_x)\subset \Omega_{j+1}\setminus \overline{\Omega}_{j-2}$ whenever $x\in \overline{\Omega_j}\setminus \Omega_{j-1}$. Now, we choose $\delta_x$ as in the compact case but according to $\varepsilon_x$ instead of $\varepsilon$. As before, we choose then the sets $V_x$, $T_x^i$ and $P_x^i$. Now for each $j$, by choosing a subcollection of the $P_x^i$, we can find finitely many absolutely convex polygon $P_1^j, \dots, P_{k_j}^j$ such that 
\[
\overline{\Omega}_j\setminus \Omega_{j-1} \subset P_1^j\cup \cdots \cup P_{k_j}^j \subset \subset \Omega_{j+1} \setminus \overline{\Omega}_{j-2}
\]
and each $P_i^j$ is of diameter at most $\varepsilon$ and such that $\partial P_i^j \setminus\partial X$ consists of transit points. Then $(P_i^j)_{j\in \mathbb{N}, 1\leq i \leq k_j}$ is locally finite and a cover of the desired type.
\end{proof}

\begin{rem}
\label{rem:nc}
When applying \Cref{thm:maingen} to non-compact surfaces, it might be helpful to have further control on the diameters of the triangles. Indeed, without serious additional difficulties, one can replace \ref{item:thm_ii} by the following stronger conclusion: Let $(\Omega_j)_{j=1}^\infty$ be an exhaustion of $X$ by relatively compact open sets as in the preceeding proof and $(\varepsilon_j)_{j=1}^\infty$ be a sequence of positive reals. Then every triangle~$T_i$ that intersects $X\setminus \overline{\Omega}_j$  has diameter at most $\varepsilon_j$. This is possible since the respective diameter bound can be achieved in \Cref{prop:abs}, and the latter steps will proceed by subdividing this given cover.  
\end{rem}

\subsection{Handling superfluous intersections} \label{subsec:covsup}

By \Cref{prop:abs}, we are now able to cover $X$ by small absolutely convex polygons. Our next objective is to find a cover by small \emph{non-overlapping} boundary convex polygons. The argument given in \cite{AZ:67} relies on a lemma stating that if $P_1, \ldots, P_n$ are non-overlapping boundary convex polygons and $P_{n+1}$ is absolutely convex, then one can subdivide $\bigcup_{j=1}^{n+1} P_j$ into finitely many non-overlapping boundary convex polygons. This is found as Lemma III.6 in \cite{AZ:67}. If the initial polygons $P_1, \ldots, P_{n+1}$ do not have superfluous intersections, then this claim is correct and the proof given in \cite{AZ:67} applies. However, if these polygons have superfluous intersections, the procedure given in the proof of Lemma III.6 in \cite{AZ:67} may not work, and it is not clear whether there exists a general procedure that remedies this. 

\begin{figure} 
    \centering
    \hfill
    \subfloat[]{
    \begin{tikzpicture}[scale=2.7]
    \draw[blue,thick] (0,0) to (2,0) to (1,.6) to (0,0);
    \draw[red,thick] (1.8,.12) to (1,.9) to (.2,.12) .. controls (.2,.1) and (.3,0) .. (.4,0) to (.44,0) .. controls (.45,.03) and (.48,.03) .. (.49,0) to (.53,0) .. controls (.56,.06) and (.64,.06) .. (.67,0) to (.71,0) .. controls (.72,.03) and (.75,.03) .. (.76,0) to (.8,0) .. controls (.9,.2) and (1.1,.2) .. (1.2,0) to (1.24,0) .. controls (1.25,.03) and (1.28,.03) .. (1.29,0) to (1.33,0) .. controls (1.36,.06) and (1.44,.06) .. (1.47,0) to (1.51,0) .. controls (1.52,.03) and (1.55,.03) .. (1.56,0) to (1.6,0) .. controls (1.7,0) and (1.8,.1) .. (1.8,.12); 
    \filldraw [red] (1,.9) circle (.5pt);
    \filldraw [red] (.2,.12) circle (.5pt);
    \filldraw [red] (1.8,.12) circle (.5pt);
    \filldraw [blue] (1,.6) circle (.5pt);
    \filldraw [blue] (0,0) circle (.5pt);
    \filldraw [blue] (2,0) circle (.5pt);
    \node[blue] at (1.95,.125) {\Large $P_2$};
    \node[red] at (1.15,.9) {\Large $P_1$};
    \end{tikzpicture}
    \label{fig:bad_intersection_1}}
    \hfill
    \subfloat[]{
    \begin{tikzpicture}[scale=2.7]
    \draw[blue,thick] (1.8,0) to (1,.6) to (.2,0) to (1.8,0);
    \draw[red,thick] (1.8,-.1) to (1,-.6) to (.2,-.1) .. controls (.25,-.05) and (.3,0) .. (.4,0) to (.44,0) .. controls (.45,.03) and (.48,.03) .. (.49,0) to (.53,0) .. controls (.56,.06) and (.64,.06) .. (.67,0) to (.71,0) .. controls (.72,.03) and (.75,.03) .. (.76,0) to (.8,0) .. controls (.9,.2) and (1.1,.2) .. (1.2,0) to (1.24,0) .. controls (1.25,.03) and (1.28,.03) .. (1.29,0) to (1.33,0) .. controls (1.36,.06) and (1.44,.06) .. (1.47,0) to (1.51,0) .. controls (1.52,.03) and (1.55,.03) .. (1.56,0) to (1.6,0) .. controls (1.7,0) and (1.75,-.05) .. (1.8,-.1); 
    \filldraw [red] (1,-.6) circle (.5pt);
    \filldraw [red] (.2,-.1) circle (.5pt);
    \filldraw [red] (1.8,-.1) circle (.5pt);
    \filldraw [blue] (1,.6) circle (.5pt);
    \filldraw [blue] (.2,0) circle (.5pt);
    \filldraw [blue] (1.8,0) circle (.5pt);
    \node[blue] at (1.,.4) {\Large $P_2$};
    \node[red] at (1.,-.4) {\Large $P_1$};
    \end{tikzpicture}
    \label{fig:bad_intersection_2}}
    \hfill \hfill
    \caption{}
    \label{fig:bad_intersection}
\end{figure}

\Cref{fig:bad_intersection_1} gives an example of a configuration consisting of a boundary convex polygon $P_1$ and absolutely convex polygon $P_2$ with edges intersecting in a Cantor set for which the procedure in \cite{AZ:67} does not apply. In \Cref{fig:bad_intersection_1}, $P_1$ cannot be enlarged without potentially interfering with its boundary convexity. Note, however, that in our situation we have the stronger property that each polygon $P_i$ is absolutely convex, not just boundary convex. This allows us to avoid configurations such as the one in \Cref{fig:bad_intersection_1}. On the other hand, a configuration such as that in \Cref{fig:bad_intersection_2} is possible for absolutely convex polygons and must be accounted for in the proof. These kinds of overlaps especially complicate the situation when attempting to decompose the union of more than two absolutely convex polygons.

Our argument proceeds in two steps. The first is the following lemma. It allows us to turn a cover by absolutely convex polygons into one by boundary convex polygons whose boundary edges form a locally finite topological graph.
\begin{lemm}
\label{lemm:abc}
Let $\mathcal{P}$ be a locally finite family of absolutely convex polygons in~$X$. Then there is a locally finite family of boundary convex polygons $\mathcal{Q}$ such that \begin{enumerate}
    \item \label{isup1}$\bigcup \mathcal{Q}=\bigcup \mathcal{P}$,
    \item \label{isup2} $\bigcup_{Q \in \mathcal{Q}} \partial Q$  
 is a locally finite topological graph,
 \item \label{isup0} each $Q\in \mathcal{Q}$ is contained in some $P\in \mathcal{P}$, and
 \item \label{isup4}$\bigcup_{Q\in \mathcal{Q}} \partial Q \setminus \bigcup_{P\in \mathcal{P}} \partial P$ consists of transit points.
\end{enumerate}
\end{lemm}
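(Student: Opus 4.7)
The strategy is to enumerate $\mathcal{P} = (P_k)_{k \geq 1}$ (possible since $\mathcal{P}$ is locally finite in the separable space $X$) and construct $\mathcal{Q}$ inductively. At stage $k$ I maintain a family $\mathcal{Q}_k$ of boundary-convex polygons covering $P_1 \cup \cdots \cup P_k$, each contained in some $P_j$ with $j \leq k$, such that the union of boundaries $\Gamma_k = \bigcup_{Q \in \mathcal{Q}_k} \partial Q$ is a locally finite graph of geodesics, and such that $\Gamma_k \setminus \bigcup_{j \leq k} \partial P_j$ consists of transit points. Setting $\mathcal{Q} = \bigcup_k \mathcal{Q}_k$ will yield the lemma. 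The base case $k = 1$ takes $\mathcal{Q}_1 = \{P_1\}$ and $\Gamma_1 = \partial P_1$, which works since $P_1$ is absolutely convex, hence boundary convex, and its boundary is a finite piecewise-geodesic curve.

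For the inductive step, I use \Cref{lemm:sup1} iteratively on the edges $e_1, \ldots, e_{n_k}$ of $\partial P_k$: at step $i$, I replace $e_i$ by a geodesic $e_i'$ with the same endpoints such that $\{e_1', \ldots, e_i'\} \cup \Gamma_{k-1}$ remains a locally finite graph, and such that each component of $|e_i'| \setminus |e_i|$ is contained in the image of a single geodesic of $\Gamma_{k-1}$. By absolute convexity of $P_k$, each $|e_i'| \subset P_k$. The modified closed curve $c_k = e_1' * \cdots * e_{n_k}'$, together with the (now finitely many) arcs of $\Gamma_{k-1} \cap P_k^\circ$, determines a finite geodesic graph inside $P_k$ whose complementary regions are closed disks. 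Each such region should be a boundary-convex polygon: this follows by iterated application of \Cref{lemm:part}, starting from the boundary-convex polygon $P_k$ and cutting successively along the geodesic arcs provided by $\Gamma_{k-1}$ and by the modified edges $e_i'$. I then further refine the existing polygons $Q \in \mathcal{Q}_{k-1}$ that meet $P_k$ by cutting them along the same finite collection of arcs, once more preserving boundary convexity by \Cref{lemm:part}; local finiteness of $\mathcal{P}$ ensures only finitely many $Q$ are affected at each stage, so the total family $\mathcal{Q}$ is locally finite.

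The transit-point property is preserved because every new geodesic added to $\Gamma_k$ is either a subarc of an edge in $\Gamma_{k-1}$ (transit by induction) or a subarc of a modified edge $e_i'$, whose interior consists of transit points since $e_i'$ extends as a geodesic in $X$; only the endpoints of $e_i'$, which are already vertices of $\partial P_k$, might fail to be transit. The main obstacle that I expect is to verify that the modified boundary $c_k$ is a simple closed curve that actually bounds a subdisk $\tilde{P}_k$ of $P_k$; when $c_k$ is not simple, further work is needed to decompose it into its simple components and to control the complementary region $P_k \setminus \tilde{P}_k$. This latter region is bounded by pieces of $\partial P_k$ and pieces of $e_i'$-deviations, which by condition (iii) of \Cref{lemm:sup1} lie in $\Gamma_{k-1}$; the key structural claim to establish is that $P_k \setminus \tilde{P}_k$ is already contained in $\bigcup \mathcal{Q}_{k-1}$, so that covering $\tilde{P}_k$ suffices to maintain the inductive invariant $\bigcup \mathcal{Q}_k = P_1 \cup \cdots \cup P_k$.
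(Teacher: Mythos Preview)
Your inductive framework is essentially the paper's: enumerate $\mathcal{P}$, at each stage apply \Cref{lemm:sup1} to the edges of $\partial P_k$ to obtain modified edges forming a finite graph with $\Gamma_{k-1}$, then subdivide $P_k$ via \Cref{lemm:part}. Two points, however, diverge in ways that matter.

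First, refining the polygons of $\mathcal{Q}_{k-1}$ is both unnecessary and inconsistent with setting $\mathcal{Q}=\bigcup_k\mathcal{Q}_k$: once a polygon is subdivided, this union would contain both it and its pieces. The paper leaves $\mathcal{Q}_{k-1}$ untouched and simply adjoins those pieces of $P_k$ (cut only by the modified edges $e_i^*$, not additionally by $\Gamma_{k-1}$) whose boundaries avoid $\partial P_k\setminus\bigcup_i|e_i^*|$. That previously constructed polygons are never modified is exactly what makes local finiteness in the infinite case immediate.

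Second, you correctly isolate the crux---showing that the discarded region of $P_k$ is already covered by $P_1\cup\cdots\cup P_{k-1}$---but the argument requires an ingredient you never invoke: complete convexity of the \emph{earlier} polygons $P_j$, not only of $P_k$. In the paper, a discarded piece sits inside a bigon $R$ bounded by a subarc of some original edge $e_i$ together with a component $e_i'$ of $|e_i^*|\setminus|e_i|$. Condition~\ref{i3} of \Cref{lemm:sup1} places $e_i'$ inside an edge of $\Gamma_{k-1}$, hence inside some $P_j$ with $j<k$. Complete convexity of $P_j$ then forces the geodesic subarc of $e_i$ (whose endpoints lie on $e_i'\subset P_j$) into $P_j$ as well, so $\partial R\subset P_j$; the diameter condition~\eqref{item:boundary_convexity_3} of \Cref{defi:convexity} then yields $R\subset P_j$. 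Knowing only that $e_i'\subset\Gamma_{k-1}$, without complete convexity of $P_j$, does not let you conclude that the region between $e_i$ and $e_i'$ is contained in $\bigcup\mathcal{Q}_{k-1}$.
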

The idea of the proof is to enumerate the edges of the polygons in $\mathcal{P}$ and inductively replace them by means of \Cref{lemm:sup1} to obtain a locally finite graph. \Cref{lemm:part} then guarantees that the arising polygons are boundary convex.
\begin{proof}
We first consider the case of a finite collection $\mathcal{P}=\{P_1,\dots, P_k\}$ and prove the statement by induction on $k$. The base case $k=1$ is trivial. For the induction step, assume that the statement of the lemma holds for all collections of at most $k-1$ polygons. Consider a collection $\mathcal{P}=\{P_1,\dots,P_k\}$ of $k$ polygons as in the statement of the lemma. By the induction assumption, we may find a finite collection $\widehat{\mathcal{Q}}$ of boundary convex polygons satisfying properties (\ref{isup1})-(\ref{isup4}) for $\widehat{\mathcal{P}}=\{P_1,\dots, P_{k-1}\}$. Denote by $\widehat{\mathcal{E}}$ the finite graph formed by the the edges of the polygons $Q\in \widehat{\mathcal{Q}}$ and fix a disk~$U$ such that $P_k$ is absolutely convex with respect to $U$.

Denote by $(e_i)_{i=1}^m$ the edges of $P_k$. Applying Lemma~\ref{lemm:sup1} for each $i=1,\dots,m$ we can find a new geodesic $e_i^*$ such that 
\begin{enumerate}[label=(\roman*)]
    \item $e_i^*$ has the same endpoints as $e_i$, \label{item:4.4i}
    \item $|\widehat{\mathcal{E}}|\cup |e_i^*|$ is a finite topological graph, and \label{item:4.4ii}
    \item every connected component of $|e_i^*|\setminus |e_i|$ is contained in the image of some edge $e\in \widehat{\mathcal{E}}$. \label{item:4.4iii}
\end{enumerate}
By the absolute convexity of $P_k$, we must have $|e_i^*|\subset P_k$. Furthermore, by condition~\ref{item:4.4iii}, the interiors of the edges $e_i^*$ and $e_j^*$ for $i\neq j$ can only intersect within~$|\widehat{\mathcal{E}}|$. Thus, it follows from condition~\ref{item:4.4ii} that $|\widehat{\mathcal{E}}|\cup |e_1^*|\cup \cdots \cup |e_m^*|$ is also a finite topological graph.

Applying \Cref{lemm:part} inductively, it follows that the geodesics $e_1^*,\dots ,e_m^*$ subdivide $P_{k}$ into  a family of non-overlapping boundary convex polygons $\widetilde{\mathcal{Q}}_k$ such that $\bigcup_{Q\in \widetilde{\mathcal{Q}}_{k}} \partial Q \setminus \partial P_k$ consists of transit points. Let $\mathcal{Q}_{k}$ be the subcollection of those $Q\in \widetilde{\mathcal{Q}}_{k}$ which do not intersect $\partial P_{k} \setminus (|e_1^*| \cup \cdots \cup |e_m^*|)$. Note that $\mathcal{Q}_k$ is finite since $|e_1^*|\cup \cdots \cup |e_m^*|$ is a finite topological graph. Set $\mathcal{Q}=\widehat{\mathcal{Q}}\cup \mathcal{Q}_k$. It follows readily that the properties (\ref{isup2})-(\ref{isup4}) are satisfied for $\mathcal{Q}$. Since $\bigcup \widehat{\mathcal{Q}}=P_1\cup \cdots \cup P_{k-1}$ and $\bigcup \widetilde{\mathcal{Q}}_k=P_k$, to verify (\ref{isup1}) it suffices to show that every $Q\in \widetilde{\mathcal{Q}}_k\setminus \mathcal{Q}_k$ is already contained in some $P_j$ with $j<k$.

To this end, let $Q\in \widetilde{\mathcal{Q}}_{k} \setminus \mathcal{Q}_{k}$. Then $Q$ intersects $|e_i| \setminus |e_i^*|$ for some $1\leq i\leq m$. By \Cref{lemm:part}, $e_i^*$ subdivides $P_{k}$ into a family of nonverlapping polygons~$\mathcal{R}$. Certainly there must be some $R\in \mathcal{R}$ with $Q\subset R$. The boundary of $R$ is composed of a subcurve of $e_i$ together with a connected component of $|e_i^*|\setminus |e_i|$, denoted by $e_i'$. By \ref{item:4.4iii}, $e_i'$ is a subcurve of some edge of a polygon $\widehat{Q}\in \widehat{\mathcal{Q}}$, and hence, by property~\eqref{isup0} for~$\widehat{\mathcal{Q}}$, $e_i'$~is contained in some $P_j$ with $j<k$. Since $P_k$ and $P_j$ are completely convex, we have $\partial R\subset P_k\cap P_j$. Since $\partial R$ is a Jordan curve, it must bound some disk within $P_j$. If this disk were not equal to $R$, then we would have $X=R\cup P_j$ and hence $X=P_k\cup P_j$. This would be a contradiction since $X$ is connected and $P_j$ and $P_k$ are both of diameter at most $\diam (X)/3$. Thus we conclude that $R\subset P_j$ and hence also that $Q\subset P_j$.

In the case of an infinite collection $\mathcal{P} =\{P_1,P_2,P_3,\dots\}$, inductively apply the above construction setting $\mathcal{Q}=\mathcal{Q}_1\cup \mathcal{Q}_2 \cup \cdots$. Note here that, throughout the process, we only add polygons, and the previously constructed polygons remain unchanged. Thus, certainly $\mathcal{Q}$ satisfies \eqref{isup1} as well as \eqref{isup0} and \eqref{isup4}. Within any compact set, by the local finiteness of $\mathcal{P}$ and since $\bigcup \mathcal{Q}_k \subset P_k$, only finitely many steps are of interest. Hence the local finiteness of $\mathcal{Q}$ and of the graph formed by its edges also follow.
\end{proof}
\subsection{Covering by non-overlapping polygons}\label{subsec:covover} By the previous results, we are able to cover $X$ by small boundary convex polygons such that the boundary edges of the polygons form a locally finite graph. Using the following lemma, we can improve this to a cover of $X$ by small non-overlapping boundary convex polygon. 
\begin{lemm}
\label{prop:over}
Let $\mathcal{P}$ be a locally finite family of boundary convex polygons, and let $\Gamma \subset X$ be a locally finite topological graph with $\bigcup_{P\in \mathcal{P}}\partial P \subset \Gamma$.  Then there is a locally finite family of non-overlapping boundary convex polygons $\mathcal{Q}$ such that \begin{enumerate}
    \item \label{i_dec1}$\bigcup \mathcal{Q}=\bigcup \mathcal{P}$,
     \item $\Gamma \cup \bigcup_{Q\in \mathcal{Q}} \partial Q$ is a locally finite topological graph, \label{i_dec3}
    \item each $Q\in \mathcal{Q}$ is contained in some $P\in \mathcal{P}$, and\label{i_dec4}
    \item $\bigcup_{Q\in \mathcal{Q}} \partial Q \setminus \Gamma$ consists of transit points.\label{i_dec5}
\end{enumerate}

\end{lemm}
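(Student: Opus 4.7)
I propose to construct $\mathcal{Q}$ by induction on the polygons of $\mathcal{P}$. Since $\mathcal{P}$ is locally finite and $X$ is second countable, enumerate $\mathcal{P} = \{P_1, P_2, \ldots\}$. I will inductively build a sequence of finite non-overlapping families $\mathcal{Q}_1, \mathcal{Q}_2, \ldots$ of boundary-convex polygons such that $\bigcup \mathcal{Q}_n = P_1 \cup \cdots \cup P_n$, $\bigcup_{Q \in \mathcal{Q}_n} \partial Q \subset \Gamma$, and $\mathcal{Q}_n$ agrees with $\mathcal{Q}_{n-1}$ outside a neighborhood of $P_n$. Then $\mathcal{Q} = \bigcup_n \mathcal{Q}_n$ is locally finite by local finiteness of $\mathcal{P}$, and it verifies conclusions \eqref{i_dec1}--\eqref{i_dec5}: the first three are built in, and \eqref{i_dec5} holds vacuously since $\bigcup \partial Q \subset \Gamma$.

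The base case $\mathcal{Q}_1 = \{P_1\}$ is immediate. For the inductive step, I consider each $Q \in \mathcal{Q}_{n-1}$ with $Q \cap P_n^\circ \neq \emptyset$. In the degenerate cases $Q \subset P_n$ (discard $Q$) and $P_n \subset Q^\circ$ (keep $Q$ unchanged, thereby avoiding the annular configuration where $Q \setminus P_n^\circ$ fails to be a disk), no further work is needed and $P_n$ is already covered. In the remaining case of proper intersection, I replace $Q$ by the closures of the connected components of $Q^\circ \cap P_n^\circ$ (the \emph{inside pieces}) and of $Q^\circ \setminus P_n$ (the \emph{outside pieces}). The inside pieces are boundary-convex polygons by \Cref{lemm:intersect}, with boundaries in $\partial Q \cup \partial P_n \subset \Gamma$. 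Symmetrically, I add to $\mathcal{Q}_n$ the closures of the connected components of $P_n^\circ \setminus \bigcup \mathcal{Q}_{n-1}$.

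The central technical issue is to show that the outside pieces are boundary-convex polygons. My approach is to apply \Cref{lemm:part} iteratively, using each edge $e$ of $\partial P_n$ whose endpoints lie in $X \setminus Q^\circ$ as a cutting geodesic for $Q$. Each such cut preserves boundary-convexity, and the process terminates in finitely many steps because $\partial P_n \subset \Gamma$ meets $Q$ in finitely many edges by local finiteness of $\Gamma$. Once all such cuts are performed, the resulting decomposition of $Q$ refines the partition into inside and outside pieces; since pieces arising from \Cref{lemm:part} are boundary-convex and the inside pieces are also covered by \Cref{lemm:intersect}, the outside pieces are unions of boundary-convex components, each a disk by Kerékjártó's theorem applied as in the proof of \Cref{lemm:intersect}.

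The main obstacle I anticipate is handling edges of $P_n$ with a vertex in $Q^\circ$, since \Cref{lemm:part} requires both endpoints of the cutting geodesic to lie outside $Q^\circ$. To address this, I plan to concatenate such edges along $\partial P_n$ through their shared interior vertices, obtaining piecewise geodesic chords of $Q$ with endpoints on $\partial Q$, and to establish boundary-convexity of the resulting pieces by a direct length-comparison argument in the spirit of the proof of \Cref{lemm:intersect}, using the fact that $\ell(\partial W) \leq \ell(\partial Q)$ for any new piece $W$ so that the ambient disk witnessing boundary-convexity of $Q$ continues to witness boundary-convexity of $W$. The hypothesis $\bigcup \partial P \subset \Gamma$ with $\Gamma$ locally finite is essential throughout, as it ensures that the combinatorial structure of these concatenations is finite and uniformly controllable.
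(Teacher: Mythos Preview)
Your inductive scaffolding matches the paper's, but the key step---showing that the \emph{outside pieces} (closures of components of $Q^\circ\setminus P_n$) are boundary-convex---does not go through as you propose, and this is not a technicality you can patch with a direct length-comparison argument.

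Consider an edge chain of $\partial P_n$ passing through a vertex $v\in Q^\circ$. The corresponding outside piece $W$ has a subarc $\gamma\subset\partial W\cap\partial P_n$ that is only \emph{piecewise} geodesic, with a corner at $v$. A geodesic $\eta$ in $X$ between the endpoints of $\gamma$ will in general be strictly shorter than $\gamma$ and may lie in $P_n^\circ$. Since $W^\circ\cap P_n=\emptyset$, such $\eta$ lies in $U\setminus W^\circ$ and is homotopic there to $\gamma$; thus condition~(\ref{item:boundary_convexity_2}) of \Cref{defi:convexity} fails for $W$. Boundary convexity of $P_n$ does not help here: it controls competitors \emph{outside} $P_n^\circ$, whereas your competitor runs \emph{inside} $P_n^\circ$. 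So the outside pieces genuinely need not be boundary-convex, and your claim that $\bigcup_{Q\in\mathcal{Q}}\partial Q\subset\Gamma$ (making \eqref{i_dec5} vacuous) cannot be salvaged.

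The paper resolves this by working inside the intersection pieces $W_{ij}$ (the closures of components of the interior of $P_i\cap P_n$, which \emph{are} boundary-convex by \Cref{lemm:intersect}) rather than directly with the outside pieces. For each $W_{ij}$ one lists the topological vertices of $\partial P_i\cup\partial P_n$ lying on $\partial W_{ij}$ and connects consecutive ones by \emph{new} geodesics $\gamma_{ij}^l\subset W_{ij}$, chosen via \Cref{lemm:sup1} so that together with $\Gamma$ they still form a locally finite graph. These new geodesics separate the problematic corners of $\partial P_n$ from the outside pieces: each resulting complementary component has boundary contained either in $\partial P_i\cup\bigcup_l|\gamma_{ij}^l|$ or in $\partial P_n\cup\bigcup_l|\gamma_{ij}^l|$, so it sits inside a single $P_i$ or $P_n$ and is obtained from that polygon by iterated cuts along honest geodesics, to which \Cref{lemm:part} applies. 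The price is that $\bigcup_{Q\in\mathcal{Q}}\partial Q\setminus\Gamma$ is now nonempty---it consists of interior points of the $\gamma_{ij}^l$---which is exactly why conclusion \eqref{i_dec5} is stated and why \Cref{lemm:part} is needed to verify it.
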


The proof of \Cref{thm:maingen} requires only the case where $\Gamma = \bigcup_{P \in \mathcal{P}} \partial P$. However, for technical reasons, it seems difficult to prove this special case directly.


To prove \Cref{prop:over}, we follow the idea suggested by the proof of Lemma~III.6 in~\cite{AZ:67}, which works provided that one assumes that the union of the boundary edges of the polygons form a finite graph. By \Cref{lemm:abc}, one can make this additional assumption. However, when doing so, one can no longer guarantee that each of the polygons is absolutely convex, and hence the intersection of two polygons $P_i, P_j \in \mathcal{P}$ may fail to be convex. Instead, to adapt the argument in \cite{AZ:67}, we employ \Cref{lemm:intersect} above. The decomposition procedure used to prove \Cref{prop:over} is illustrated in \Cref{fig:decomposing} for the case of two polygons $P_1, P_2$.
\begin{figure} 
    \hfill
    \subfloat{
    \begin{tikzpicture}[scale=2.8]
    \fill[gray,opacity=.2] (.333,.444) to (.5,.667) to (.75,.667) to (.75,0) to (.333,0);
    \fill[gray,opacity=.2] (1.1,.667) to (1.333,.667) to (1.333,.333) to (1.1,.333);
    \draw[thick,blue] (1.5,.333) to (1.5,.9) to (.333,.9) to (.333,0) .. controls (.333,-.2) and (.75,-.2) .. (.75,0) to (.75,.667) .. controls (.75,.8) and (1.1, .8) .. (1.1,.667) to (1.1,.333) to (1.5,.333);
    \draw[thick,red] (0,0) to (1.333,0) to (1.333,.667) to (.5,.667) to (0,0);
    \filldraw[blue] (1.5,.333) circle (.5pt);
    \filldraw[blue] (1.5,.9) circle (.5pt);
    \filldraw[blue] (.333,.9) circle (.5pt);
    \filldraw[blue] (.333,.1) circle (.5pt);
    \filldraw[blue] (.75,.1) circle (.5pt);
    \filldraw[blue] (.75,.6) circle (.5pt);
    \filldraw[blue] (1.1,.6) circle (.5pt);
    \filldraw[blue] (1.1,.333) circle (.5pt);
    \filldraw[blue] (1.5,.333) circle (.5pt);
    \filldraw[red] (0,0) circle (.5pt);
    \filldraw[red] (1.333,0) circle (.5pt);
    \filldraw[red] (1.333,.667) circle (.5pt);
    \filldraw[red] (.5,.667) circle (.5pt);
    \node[blue] at (.5,1.) {$P_1$};
    \node[red] at (0.05,.25) {$P_2$};
    \node[] at (0.55,.15) {$W_{11}$};
    \node[] at (1.21,.5) {$W_{12}$};
    \end{tikzpicture}
    } \hfill
    \subfloat{
    \begin{tikzpicture}[scale=2.8]
    \draw[thick,blue] (1.333,.333) to (1.5,.333) to (1.5,.9) to (.333,.9) to (.333,.333);
    \draw[thick,blue] (.333,0) .. controls (.333,-.2) and (.75,-.2) .. (.75,0); 
    \draw[thick,blue] (.75,.667) .. controls (.75,.8) and (1.1, .8) .. (1.1,.667);
    \draw[thick,red] (.333,.444) to (0,0) to (.333,0); 
    \draw[thick,red] (.75,0) to (1.333,0) to (1.333,.333);
    \draw[thick,red] (1.1,.667) to (.75,.667);
    \draw[violet,line width=0.6mm] (.333,.444) to (.333,0) to (.75,.0) to (.75,.667) to (.333,.444);
    \draw[violet,line width=0.6mm] (1.1,.667) to (1.333,.333);
    \filldraw[blue] (1.5,.333) circle (.5pt);
    \filldraw[blue] (1.5,.9) circle (.5pt);
    \filldraw[blue] (.333,.9) circle (.5pt);
    \filldraw[blue] (1.5,.333) circle (.5pt);
    \filldraw[red] (0,0) circle (.5pt);
    \filldraw[red] (1.333,0) circle (.5pt);
    \filldraw[violet] (1.333,.333) circle (.6pt);
    \filldraw[violet] (1.1,.667) circle (.6pt);
    \filldraw[violet] (.75,.667) circle (.6pt);
    \filldraw[violet] (.75,.0) circle (.6pt);
    \filldraw[violet] (.333,.0) circle (.6pt);
    \filldraw[violet] (.333,.444) circle (.6pt);
    \node[violet] at (0.275,-.1) {$v_{11}^0$};
    \node[violet] at (0.85,-.1) {$v_{11}^1$};
    \node[violet] at (0.875,.575) {$v_{11}^2$};
    \node[violet] at (0.225,.475) {$v_{11}^3$};
    \node[violet] at (1.2255,.675) {$v_{12}^1$};
    \node[violet] at (1.225,.275) {$v_{12}^0$};
    \end{tikzpicture}
    }
    \hfill \hfill
    \caption{}
    \label{fig:decomposing}
\end{figure}
\begin{proof}

We first consider the case of a finite collection $\mathcal{P}=\{P_1,\dots, P_n\}$. 
We may further assume that the polygons $P_1,\dots, P_{n-1}$ are non-overlapping.
The latter is possible since the case of a general finite collection follows from this special case by induction.

By \Cref{lemm:intersect}, and since $\bigcup_{P\in \mathcal{P}} \partial P$ is a finite topological graph, for each $i=1,\ldots, n$ the closure of the interior of $P_i\cap P_{n}$ is a union of finitely many non-overlapping boundary convex polygons $W_{i1},\dots, W_{ik_i}$. For each $j=1,\dots,k_i$, we let $v_{ij}^0,\dots ,v_{ij}^{m_{ij}}$  be a cyclic enumeration of the topological vertices of $\partial P_i\cup \partial P_{n}$ that lie in $\partial W_{ij}$.  Note that $W_{ij}$ is a geodesic surface with piecewise geodesic boundary and $\Gamma \cap W_{ij}$ is a finite graph. Thus, by iterated  application of \Cref{lemm:sup1}, we may find for each $1\leq i \leq n$, $1\leq j \leq k_i$ and $0\leq l\leq m_{ij}$ a geodesic $\gamma_{ij}^l \subset W_{ij}$ from $v_{ij}^l$ to $v_{ij}^{l+1}$ such that $\Gamma \cup \bigcup_{i=1}^n\bigcup_{j=1}^{k_i}\bigcup_{l=0}^{m_{ij}} |\gamma_{ij}^l|$ is a locally finite topological graph. Let $G$ be the finite topological graph that one obtains by deleting from $\partial P_1 \cup \dots\cup \partial P_n$ all topological edges which are contained in the interior of some $P_i$ and adding in all the geodesics $\gamma_{ij}^l$.  Now we set $Q_1,\dots, Q_r$ to be the closures of the complementary components of $G$ in $\bigcup_{i=1}^{n} P_i$ and take $\mathcal{Q}:=\{Q_1, \dots, Q_r\}$.

Then certainly the $Q_i$ are non-overlapping and form a decomposition of $\bigcup_{i=1}^{n} P_i$. Furthermore, since each component of $\partial P_i \setminus \bigcup_{l=0}^{m_{ij}} |\gamma_{ij}^l|$ in $W_{ij}$ is separated from $\partial P_n$ by $\bigcup_{l=0}^{m_{ij}} |\gamma_{ij}^l|$, and similarly for $\partial P_n$, it follows that the boundary of each complementary component of $\partial P_i\cup \partial P_n \cup  \bigcup_{l=0}^{m_{ij}} |\gamma_{ij}^l|$ in $W_{ij}$ is contained either in $\partial P_i \cup  \bigcup_{l=0}^{m_{ij}} |\gamma_{ij}^l|$ or contained in $\partial P_{n}  \cup \bigcup_{l=0}^{m_{ij}} |\gamma_{ij}^l|$. Thus we deduce that \eqref{i_dec4} holds. After noticing the latter, iterated application of \Cref{lemm:part} shows that each $Q_i$ is a boundary convex polygon. Finally, \eqref{i_dec3} and \eqref{i_dec5} hold, since \[
\bigcup_{i=1}^{n} \partial P_i \cup \bigcup_{i=1}^r \partial Q_i= \bigcup_{i=1}^{n} \partial P_i \cup \bigcup_{i=1}^n\bigcup_{j=1}^{k_i}\bigcup_{l=0}^{m_{ij}} |\gamma_{ij}^l|
\]
and the curves $\gamma_{ij}^l$ are geodesics with endpoints in $\bigcup_{i=1}^{n} \partial P_i \subset \Gamma$.

Now consider the case of a locally finite collection $\mathcal{P}= \{P_1,P_2,\dots \}$. We claim that for each $j\in \mathbb{N}$ there is a finite collection $\mathcal{Q}_j$ that satisfies the conclusions of the lemma for $\mathcal{P}_j := \{P_1, \ldots, P_j\}$, with the additional property that
   \[
   \{ Q\in \mathcal{Q}_j: Q\subset P_i\}=\{Q\in \mathcal{Q}_{j-1}: Q \subset P_i\}
   \]
whenever $i<j$ and the polygons $P_i$ and $P_j$ are non-overlapping. For the case $j=1$, we simply set $\mathcal{Q}_1:=\{Q_1\}$. Next, assume the family $\mathcal{Q}_j$ has been constructed. 
Denote by $\widetilde{\mathcal{Q}}_j$ the subfamily of those $Q\in \mathcal{Q}_j$ that overlap with $P_{j+1}$. The family $\mathcal{Q}_{j+1}$ is obtained from $\mathcal{Q}_j$ by replacing $\widetilde{\mathcal{Q}}_j$ with the family of those polygons obtained when applying the finite case of the result to $\widetilde{\mathcal{Q}}_j\cup \{P_{j+1}\}$. It is not hard to check that the desired properties carry over.


Set $\mathcal{Q}:=\bigcap_{j=1}^\infty \bigcup_{i=j}^\infty \mathcal{Q}_i$.  
Certainly, $\mathcal{Q}$ is a family of non-overlapping boundary convex polygons which satisfies \eqref{i_dec4} and \eqref{i_dec5}. Now let $K\subset X$ be compact. By the local finiteness  of $\mathcal{P}$, there is a maximal $l\in \mathbb{N}$ such that $P_l$ intersects $K$. 
Also by the local finiteness of $\mathcal{P}$, there is a maximal $s\in \mathbb{N}$ such that $P_s$ intersects $\bigcup \mathcal{P}_l$. Then the collection of those $Q\in \mathcal{Q}$ that intersect $K$ is the same as the collection of those $Q \in \mathcal{Q}_s$ that intersect $K$, and hence finite. We conclude that $\mathcal{Q}$ is locally finite. Similar arguments show \eqref{i_dec1} and \eqref{i_dec3}.
\end{proof}

\subsection{Decomposing polygons into triangles} 
\label{subsec:covtria}

After covering $X$ by a locally finite collection of small non-overlapping boundary convex polygons, the final step is to cut these polygons into triangles. This is achieved by the following lemma. Compare also \cite[p.60]{AZ:67}.
\begin{lemm}
\label{lemm:4}
Let $P\subset X$ be a boundary convex polygon. Then one can decompose~$P$ into non-overlapping boundary convex triangles $T_1, \dots , T_n$ such that $\partial T_i \setminus \partial P$ consists of transit points for each~$i$.
\end{lemm}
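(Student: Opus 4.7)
The plan is to induct on the number $n$ of edges in a reduced representation of $P$. For the base case $n \leq 3$, the polygon $P$ is itself a boundary-convex triangle $T_1 = P$, and $\partial T_1 \setminus \partial P = \emptyset$ trivially consists of transit points.

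For the inductive step, assume $n \geq 4$ and let $v_1,\ldots,v_n$ be the vertices of $P$ in cyclic order, with $e_i$ the geodesic edge from $v_i$ to $v_{i+1}$. Since $P$ is boundary convex, the ambient disk $U$ from \Cref{defi:convexity} satisfies $d(P,\partial U\setminus \partial X)>4\ell(\partial P)>\diam(P)$, so the hypothesis of \Cref{lemm:sup2} is fulfilled. I would apply \Cref{lemm:sup2} with $p_0=v_1$ and $p_1=v_3$ to obtain a geodesic $\gamma$ from $v_1$ to $v_3$ that has no superfluous intersections with $(e_1,\ldots,e_n)$; arguing as in the proof of \Cref{lemm:part}, I arrange that $|\gamma|\subset P$. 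Because $P$ is reduced, $\ell(\gamma)=d(v_1,v_3)<\ell(e_1)+\ell(e_2)$, so $\gamma\neq e_1*e_2$ and in particular $v_2\notin |\gamma|$.

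I then extract a maximal subgeodesic $\alpha\subset \gamma$ whose interior lies in $P^\circ$; such an $\alpha$ must exist because $v_2\notin|\gamma|$ forces $\gamma$ to enter $P^\circ$ somewhere between $v_1$ and $v_3$. Applying \Cref{lemm:part} to the geodesic $\alpha$ (whose endpoints lie in $\partial P\subset X\setminus P^\circ$), the two connected components of $P\setminus |\alpha|$ have closures $Q_1$ and $Q_2$ that are boundary-convex polygons, with $\partial Q_i\setminus \partial P$ consisting of transit points. Let $Q_1$ be the component containing $v_2$. The induction then closes provided $Q_1$ is a triangle and $Q_2$ admits a reduced representation with at most $n-1$ edges, so that the inductive hypothesis applies to $Q_2$.

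The main obstacle is the verification of these edge counts. The no-superfluous-intersections property of $\gamma$ — namely that $|\gamma|\cap |e_j|$ is connected for each $j$ — forces each endpoint of $\alpha$ to lie either at $v_1$ or $v_3$, or in the interior of one of the four edges $e_n,e_1,e_2,e_3$ incident to $v_1$ or $v_3$; in particular, $\gamma$ cannot touch $\partial P$ at an isolated interior point of $e_j$ for $j\notin\{1,2,3,n\}$ without forcing a second disconnected component of $|\gamma|\cap |e_j|$ with $v_3$. In every permitted case $Q_1$ is bounded by $\alpha$ together with at most two subarcs of the edges $e_1$ and $e_2$ adjacent to $v_2$, so $Q_1$ has at most three edges. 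For $Q_2$, the boundary is $\alpha$, an initial subarc of $e_1$ or $e_n$ joining an endpoint of $\alpha$ back to $v_1$, the chain $e_n,e_{n-1},\ldots,e_3$ of edges disjoint from $v_2$ suitably reversed, and a terminal subarc of $e_2$ or $e_3$ joining $v_3$ to the other endpoint of $\alpha$. The three pieces in this list involving $\alpha$ together retrace the geodesic $\gamma$ (or its reverse), so they consolidate into a single geodesic edge in the reduced representation of $Q_2$, yielding at most $n-1$ edges. Iterating the procedure and invoking \Cref{lemm:part} at each step preserves both boundary convexity and the transit point condition, completing the decomposition.
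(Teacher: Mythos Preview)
Your inductive setup is the same as the paper's, but the edge-count argument has a genuine gap. The claim that ``$\gamma$ cannot touch $\partial P$ at an isolated interior point of $e_j$ for $j\notin\{1,2,3,n\}$'' is false: the no-superfluous-intersections condition only says $|\gamma|\cap|e_j|$ is connected, and a single point or subarc of $e_j$ is perfectly connected even when $v_1,v_3\notin|e_j|$. Concretely, take $n=6$ and suppose $\gamma$ runs from $v_1$ into $P^\circ$, meets $e_5$ along a subarc $[p,q]$, re-enters $P^\circ$, and ends at $v_3$; this $\gamma$ has connected intersection with every edge. If you pick $\alpha$ to be the transverse subarc from $v_1$ to $p$, then the piece $Q_1$ containing $v_2$ is bounded by $\alpha$ together with $e_1,e_2,e_3,e_4$ and a portion of $e_5$ --- six edges, not three, and in general no consolidation is available. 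The induction does not close.

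The paper avoids this by cutting along the \emph{entire} geodesic $\gamma$ rather than a single transverse subarc. This may produce more than two pieces $P_1,\ldots,P_m$, but each $P_i$ has boundary consisting of one subgeodesic $\gamma_i$ of $\gamma$ and one subarc $\eta_i$ of $\partial P$. The point is that every such $\eta_i$ is contained in one of the two arcs of $\partial P$ between the chosen non-consecutive vertices, and each of those arcs carries at most $k-3$ interior vertices of $\partial P$; hence every $P_i$ has at most $k-1$ vertices and the induction goes through. (The paper also applies \Cref{lemm:sup2} with $X$ replaced by $P$ itself, which immediately forces $|\gamma|\subset P$ without a separate pushing-in argument.)
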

\begin{proof}
We prove this by induction on the number of vertices $k$ of $P$. The base cases $k=2$ and $k=3$ are trivial. Now assume the claim holds for polygons with at most $k-1$ vertices, where $k\geq 4$. Let $P$ be a boundary convex polygon with $k\geq 4$ vertices. Choose non-consecutive vertices $v$ and $w$ of  $\partial P$. Since $P$ is convex and has piecewise geodesic boundary, we may apply~\Cref{lemm:sup2} with $X$ replaced by $P$. Thus we can find a geodesic~$\gamma$ from $v$ to $w$ within $P$ which does not have superfluous intersections with $\partial P$. By \Cref{lemm:part} the curve~$\gamma$ subdivides~$P$ into finitely many non-overlapping boundary convex polygons $P_1,\dots,P_m$. The boundary of each $P_i$ is composed of a subgeodesic $\gamma_i$ of $\gamma$ and a subarc $\eta_i$ of $\partial P$. Since $v$ and $w$ are nonconsecutive we can arrange that on the interior of $\eta_i$ there lie at most $k-3$ vertices of $\partial P$. Thus we may represent each $P_i$ as a polygon with at most $k-1$ vertices. Furthermore, note that $\partial P_i\setminus \partial P$ consists of transit points. Thus we can derive the claim by applying the inductive assumption to each of the polygons~$P_i$.
\end{proof}
\Cref{prop:abs}, \Cref{lemm:abc}, \Cref{prop:over}  and \Cref{lemm:4} together complete the proof of \Cref{thm:maingen}, except for the non-degeneracy conclusion~\ref{item:non_degenerate}.
\section{ Decomposing bigons into non-degenerate triangles}
\label{sec:degenerate_triangles}

Let $X$ be a length surface as in \Cref{thm:maingen}. In \Cref{sec:proof}, we found a cover $(T_i)_{i \in \mathcal{I}}$ of $X$ by triangles satisfying all the conclusions of \Cref{thm:maingen} with the possible exception of property~\ref{item:non_degenerate}, namely that each triangle is non-degenerate. To complete the proof of \Cref{thm:maingen},  we apply the following proposition to each degenerate triangle.


\begin{prop} 
\label{lemm:deg_triang}
Let $B\subset X$ be a boundary convex triangle. Then $B$ may be decomposed into finitely many non-overlapping non-degenerate boundary convex triangles $T_1,\dots, T_n$ such that $\partial T_i \setminus \partial B$ consists of transit points for each $i$.
\end{prop}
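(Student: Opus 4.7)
The plan is as follows. Since $B$ is degenerate, we may represent it as a bigon with vertices $p, q$ and two boundary geodesics $\alpha, \beta$ from $p$ to $q$, each of length $L = d(p, q)$; as $B$ is a disk, $B^\circ \neq \emptyset$. I would subdivide $B$ along a single geodesic $\gamma$ joining a transit point $x \in \alpha^\circ$ to a transit point $y \in \beta^\circ$, obtained from \Cref{lemm:trans} and the convexity of $B$ (implied by boundary-convexity). By \Cref{lemm:part}, $\gamma$ subdivides $B$ into two boundary-convex triangles $T_1 = (p, x, y)$ and $T_2 = (x, q, y)$, with $\partial T_i \setminus \partial B \subset |\gamma|$ consisting of transit points. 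The task then reduces to choosing $x, y$ so that both $T_1$ and $T_2$ are non-degenerate.

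Writing $s = d(p, x)$ and $t = d(p, y)$, the six triangle inequalities for $T_1, T_2$ collectively reduce to
\[
|s - t| < d(x, y) < \min(s + t,\, 2L - s - t).
\]
The outer bounds correspond to $p$, respectively $q$, not lying on any geodesic from $x$ to $y$; the inner bound asserts that neither $x$ nor $y$ lies on a geodesic connecting one of the original vertices to the other new point. All three conditions are open on $(x, y) \in \alpha^\circ \times \beta^\circ$, so the set of admissible pairs is open.

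The principal obstacle is to show that this admissible set is nonempty. For each forbidden equality, the locus in $(s, t) \in (0, L)^2$ where it holds is closed; I would show that their union is a proper subset. If the union covered $(0, L)^2$, then every pair of boundary points on opposite sides would be joined by a geodesic through $p$ or $q$, which, combined with $B^\circ \neq \emptyset$ and boundary-convexity, is incompatible with the structure of a length surface. For the $\ell^1$-square as a model example, admissible $(x, y)$ lie in an ``anti-diagonal'' region in which $x$ and $y$ are on different halves of their respective boundary geodesics, and I expect a similar structure to yield admissible pairs in general, with \Cref{lemm:sup2} available to arrange the geodesic $\gamma$ to avoid superfluous intersections with $\alpha \cup \beta$. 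Should a single cut fail for some unusual configuration, I would introduce an interior transit vertex $z \in B^\circ$ and further subdivide via geodesics from $z$ to $p, q, x, y$ into four triangles, providing the extra degrees of freedom to avoid all degeneracy.
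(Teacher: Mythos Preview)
Your proposal has two substantive gaps. First, the heart of the argument—that the admissible set of pairs $(x,y)$ is nonempty—is only asserted, and the sketched justification misreads one of the three forbidden equalities: the condition $d(x,y)=|s-t|$ does \emph{not} say that some geodesic from $x$ to $y$ passes through $p$ or $q$; for $s>t$ it says instead that $y$ lies on a geodesic from $p$ to $x$, which is a different and much weaker constraint. So even if the three closed loci covered $(0,L)^2$, your conclusion ``every pair is joined through $p$ or $q$'' would not follow, and the contradiction collapses. Second, even granting an admissible pair $(x,y)$, the geodesic $\gamma$ you draw need not meet $\partial B$ only at its endpoints: \Cref{lemm:sup2} and \Cref{rem:sup_big} only ensure that $|\gamma|\cap|\alpha|$ and $|\gamma|\cap|\beta|$ are connected, so $\gamma$ may run along $\alpha$ or $\beta$ for a subarc before crossing. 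The actual transverse cut then has endpoints $x',y'$ with parameters $(s',t')\neq(s,t)$, and nothing in your argument controls $d(x',y')-|s'-t'|$; both resulting pieces can again be bigons. Your fallback of inserting an interior vertex $z$ does not escape this, since the four geodesics from $z$ face the same transversality and non-degeneracy problems, and you give no mechanism for choosing $z$.

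The paper's approach is altogether different and considerably more delicate. It argues by contradiction, positing an \emph{indecomposable} bigon, first observes that every transverse geodesic in it must satisfy $\ell(\widehat\gamma)=|l_{\widehat\gamma}-r_{\widehat\gamma}|$ (this is exactly the obstruction to your single-cut plan), and then performs a sequence of reductions to sub-bigons carrying extra rigidity properties \ref{item:degtriangle1}--\ref{item:degtriangle2}. The contradiction is finally extracted from the interplay of \Cref{lemm:contra} and \Cref{lemm:ext}, which together manufacture a geodesic violating those properties. The authors explicitly flag this step as the one place where the bounded-curvature argument is genuinely simpler than the general case, so a short direct construction of the kind you outline should not be expected to succeed without substantial new ideas.
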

\Cref{lemm:deg_triang} is proved for surfaces of bounded curvature~$X$ in \cite[Lemma III.7, p.60]{AZ:67}, and the proof of this special case is easier. The remainder of this section is dedicated to the proof of \Cref{lemm:deg_triang}.

\subsection{Preliminary remarks} \label{sec:non_degenerate_remarks}

The conclusion is trivial if $B$ is a non-degenerate triangle.  Thus, to prove \Cref{lemm:deg_triang}, we may assume that $B$ is a bigon. We call a boundary convex bigon $B\subset X$ \emph{indecomposable} if no subdivision as in \Cref{lemm:deg_triang} is possible. The strategy is to analyze the structure of a hypothetical indecomposable bigon until we reach a contradiction. Before giving the proof, we make several preliminary observations.

Consider a bigon $B$ with bottom vertex $b$, top vertex $t$, left side $L$ and right side $R$ as shown in \Cref{fig:bigon}. Assume that the curves $L$ and $R$ are parametrized by arc length on the interval $[0,a]$, beginning at $b$ and ending at $t$. We refer to $(b,t,L,R,a)$ as the \emph{data} associated with $B$. Note that, by the vertex perturbation trick described in \Cref{sec:disks}, if a bigon $B$ is indecomposable, then $\partial B$ is not locally a geodesic at each of its vertices.

\begin{figure} 
    \subfloat[]{
    \begin{tikzpicture}[scale=2.2]
    \draw[thick] (0,-.866) arc[start angle=-60, end angle=60, radius=1];
    \draw[thick] (0,.866) arc[start angle=120, end angle=240, radius=1];
    \filldraw (0,-.866) circle (.5pt);
    \filldraw (0,.866) circle (.5pt);
    \node at (.05,-.95) {$b$};
    \node at (.05,.95) {$t$};
    \node at (-.625,0) {$L$};
    \node at (.625,0) {$R$};
    \end{tikzpicture}
    \label{fig:bigon}}
    \hfill
    \subfloat[]{
    \begin{tikzpicture}[scale=2.2]
    \fill[gray,opacity=.1] (0,-.866) arc[start angle=-60, end angle=5, radius=1] to (-.366,-.5) arc[start angle=210, end angle=240, radius=1];
    \draw[thick] (0,-.866) arc[start angle=-60, end angle=60, radius=1];
    \draw[thick] (0,.866) arc[start angle=120, end angle=240, radius=1];
    \draw[blue,line width=0.5mm] (-.4658,.2588) arc[start angle=165, end angle=210, radius=1] to (.496,.087) arc[start angle=5, end angle=15, radius=1];
    \draw[red,line width=.5mm] (-.366,-.5) to (.05,-.2167) to (.496,-.087);
    \filldraw (0,-.866) circle (.5pt);
    \filldraw (0,.866) circle (.5pt);
    \filldraw[blue] (-.4658,.2588) circle (.5pt);
    \filldraw[blue] (.4658,.2588) circle (.5pt);
    \filldraw[red] (-.366,-.5) circle (.5pt);
    \filldraw[blue] (.496,.087) circle (.5pt);
    \node at (.05,-.95) {$b=b'$};
    \node at (.05,.95) {$t$};
    \node at (-.685,.1825) {$L(s)$};
    \node at (.675,.275) {$R(s)$};
    \node at (.6,.075) {$t'$};
    \node at (-.62,-.525) {$L(m')$};
    \node[blue] at (-.205,-.275) {\Large $\gamma$};
    \node at (0,-.575) {\Large $B'$};
    \node[red] at (.2,-.3) {\Large $\alpha$};
    \end{tikzpicture}
    \label{fig:bigon_2}} \hfill
    \subfloat[]{
    \begin{tikzpicture}[scale=2.2]
    \draw[thick] (0,-.866) arc[start angle=-60, end angle=60, radius=1];
    \draw[thick] (0,.866) arc[start angle=120, end angle=240, radius=1];
    \draw[blue,line width=0.5mm] (-.496,.087) arc[start angle=175, end angle=190, radius=1] to (.2071,.7071);
    \draw[red,line width=.5mm] (.25,.55) to (-.319,.574) arc[start angle=145, end angle=155, radius=1] to (-.2,-.1);
    \filldraw (0,-.866) circle (.5pt);
    \filldraw (0,.866) circle (.5pt);
    \filldraw[blue] (-.496,.087) circle (.5pt);
    \filldraw[blue] (.2071,.7071) circle (.5pt);
    \node at (.05,-.95) {$b$};
    \node at (.05,.95) {$t$};
    \node at (-.725,.1) {$L(m)$};
    \node at (.45,.71) {$R(r_n)$};
    \node[blue] at (-.025,.2) {\Large $\alpha_n$};
    \node[red] at (-.025,.65) {\Large $\gamma$};
    \end{tikzpicture}
    \label{fig:bigon_3}}
    \caption{}
    \label{fig:Prop(ii)}
\end{figure}

By \Cref{rem:sup_big}, any pair of points $L(l),R(r)$ with $l,r\in (0,a)$ is joined by a geodesic that does not have superfluous intersections with $L$ and $R$. Such a geodesic $\gamma$ will be called \emph{horizontal}, and we denote $l_\gamma=l$ and $r_\gamma=r$. We say that a horizontal geodesic is \emph{transverse} if it intersects $\partial B$ only in its endpoints. From the previous paragraph, we see that any indecomposable bigon $B$ contains an abundance of transverse geodesics. Namely, every horizontal geodesic $\gamma$ contains a unique transverse subcurve, denoted by $\widehat{\gamma}$, which we refer to as the \textit{transverse part} of $\gamma$. We say that $\gamma$ \textit{points upward} if $l_{\widehat{\gamma}}<r_{\widehat{\gamma}}$ and \textit{points downward} if $r_{\widehat{\gamma}}<l_{\widehat{\gamma}}$.

By \Cref{lemm:part}, a transverse geodesic $\gamma$ splits $B$ into two boundary convex triangles $B_b^\gamma$ and $B_t^\gamma$ with respective vertex sets $\{b,L(l_{\gamma}),R(r_\gamma)\}$ and $\{t,L(l_{\gamma}),R(r_\gamma)\}$ such that  $(\partial B_b^\gamma \cup \partial B_t^\gamma )\setminus \partial B$ consists of transit points. We claim that if $B$ is an indecomposable bigon, then both $B_t^\gamma$ and $B_b^\gamma$ can be represented as bigons. Indeed, if $B$ is indecomposable, then at least one of $B_t^\gamma$ and $B_b^\gamma$ must be representable as an indecomposable bigon. Assume without loss of generality that this is the case for $B_b^\gamma$. Since $B_b^\gamma$ is indecomposable, the representation of $B_b^\gamma$ as a triangle with vertices $b,L(l_{\gamma}),R(r_\gamma)$ must reduce to a bigon. However, we cannot remove $b$, since $\partial B$ is not locally a geodesic at $b$. Hence we conclude that $\ell(\gamma)=|r_\gamma-l_\gamma|$, and in particular that $B_t^\gamma$ is also a bigon.

If $\gamma_1, \gamma_2$ are two horizontal geodesics, we say that $\gamma_1$ is \textit{below} (respectively, \emph{above})~$\gamma_2$ if $|\widehat{\gamma}_1|$ is contained in $B_b^{\widehat{\gamma}_2}$  (respectively, in $B_t^{\widehat{\gamma}_2}$). Given $l,r\in (0,a)$ we can find a bottommost and a topmost geodesic among all horizontal geodesics $\gamma$ with $l_\gamma=l$ and $r_\gamma=r$. 
To construct these curves, one proceeds as in the construction of the outermost curve in the proof of \Cref{lem:Abs}.
\subsection{Proof of \Cref{lemm:deg_triang}}

Let $B\subset X$ be an indecomposable bigon with data $(b,t,L,R,a)$. As the first step, we show that we can replace $B$ by another indecomposable bigon in a way that yields additional information on certain geodesics. See \ref{item:degtriangle1} and \ref{item:degtriangle2} below.

Fix $s\in (0,a)$. Let $\gamma$ be a horizontal geodesic with $l_\gamma=r_\gamma =s$. Then we must either have that 
\[l_{\widehat{\gamma}}\leq s \ \ \text{and}  \ \ r_{\widehat{\gamma}}\leq s,
\] or that \[l_{\widehat{\gamma}}\geq s \ \ \text{and}  \ \ r_{\widehat{\gamma}}\geq s.
\]
Otherwise, $\widehat{\gamma}$ would split $B$ in such a way that both $B^{\widehat{\gamma}}_t$ and $B^{\widehat{\gamma}}_b$ are decomposable using the vertex perturbation trick. By interchanging $b$ and $t$ and reorienting $L$ and $R$ if needed, we may assume that $l_{\widehat{\gamma}}\leq s$ and $r_{\widehat{\gamma}}\leq s$.

Now choose $\gamma$ to be bottommost among all horizontal geodesics with $l_\gamma=s$ and $r_\gamma=s$. After possibly interchanging $L$ and $R$, we may also assume that $\widehat{\gamma}$ points upward. Then $B_t^{\widehat{\gamma}}$ can be decomposed by means of the vertex perturbation trick, and hence $B_b^{\widehat{\gamma}}$ is indecomposable. Thus $B'= B_b^{\widehat{\gamma}}$ is a new indecomposable bigon with data $(b',t',L',R',a')$, where $b'=b$, $t'=R(r_{\widehat{\gamma}})$, $L'$ is the composition of the restriction of $L$ to $[0,l_{\widehat{\gamma}}]$ and $\widehat{\gamma}$, $R'$ is the restriction of $R$ to 
$[0,r_{\widehat{\gamma}}]$, and $a'=r_{\widehat{\gamma}}$. Additionally, for $m'=l_{\widehat{\gamma}}\in (0,a')$ and the indecomposable bigon $B'$, we have the following properties:
\begin{enumerate}[label=(\roman*)]
    \item \label{item:degtriangle1}
    The restriction of $L'$ to $[m',a']$ is the unique geodesic within $B'$ from $L(m')$ to $t'$, and
    \item \label{item:degtriangle2} every horizontal geodesic $\alpha\subset B'$ with $l_\alpha=m'$ and $r_{\widehat{\alpha}}\in (m',a)$ must satisfy $l_{\widehat{\alpha}}<m'$.
\end{enumerate}
Property \ref{item:degtriangle1} follows immediately from our assumption that $\gamma$ is downmost between its endpoints. For the second property, assume that $\alpha\subset B'$ is a horizontal geodesic with $m'=l_\alpha\leq l_{\widehat{\alpha}}$ and $r_{\widehat{\alpha}}\in (m',a')$. We may assume without loss of generality that $r_\alpha=r_{\widehat{\alpha}}$. Then $\alpha$ is a transverse geodesic when considered not with respect to $B'$ but with respect to $B$. See \Cref{fig:bigon_2}. In particular, we must have that $\ell(\alpha)=r_\alpha-m'$. Thus we obtain a geodesic within $B'$ from $L(m')$ to $t'$ by composing $\alpha$ with the subarc of $R'$ from $R'(r_\alpha)$ to $t$. This contradicts \ref{item:degtriangle1}.

From now on, by replacing $B$ with $B'$, we may assume that our indecomposable bigon has the properties \ref{item:degtriangle1} and \ref{item:degtriangle2} for some $m\in (0,a)$. 

\begin{lemm}
\label{lemm:contra}
Let $B$ be an indecomposable bigon with data $(b,t,L,R,a)$ that satisfies properties \ref{item:degtriangle1} and \ref{item:degtriangle2} for some $m\in (0,a)$. Then any geodesic $\gamma \subset B$ that intersects $L([m,a])$ must have at least one endpoint in $L([m,a])$.
\end{lemm}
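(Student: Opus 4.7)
The strategy is proof by contradiction. Suppose $\gamma\colon[0,T]\to B$ is a geodesic parametrized by arc length with $|\gamma|\cap L([m,a])\neq\emptyset$, yet neither $p:=\gamma(0)$ nor $q:=\gamma(T)$ lies in $L([m,a])$. Set $t_1:=\min\{t:\gamma(t)\in L([m,a])\}$ and $t_2:=\max\{t:\gamma(t)\in L([m,a])\}$, write $L(s_i):=\gamma(t_i)$, and, by reversing $\gamma$ if necessary, assume $s_1\leq s_2$. Then $0<t_1\leq t_2<T$.

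The first step is to show that $\gamma|_{[t_1,t_2]}$ traces exactly $L|_{[s_1,s_2]}$. Since $L|_{[m,a]}$ is a geodesic by property~\ref{item:degtriangle1}, its sub-arc $L|_{[s_1,s_2]}$ is a geodesic, so $d(L(s_1),L(s_2))=s_2-s_1$ and thus $t_2-t_1=s_2-s_1$. The concatenation
\[
\sigma := L|_{[m,s_1]}\cdot\gamma|_{[t_1,t_2]}\cdot L|_{[s_2,a]}
\]
is then a curve in $B$ from $L(m)$ to $t$ of length $a-m=d(L(m),t)$, hence a geodesic. By the uniqueness in~\ref{item:degtriangle1}, $|\sigma|=L([m,a])$, forcing $|\gamma|_{[t_1,t_2]}|\subseteq L([m,a])$; being the trace of a length-$(s_2-s_1)$ geodesic between $L(s_1)$ and $L(s_2)$ inside the simple arc $L([m,a])$, it must equal $L([s_1,s_2])$. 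The same prepend-and-append device yields as a corollary that, for any $m\leq s\leq s'\leq a$, the sub-arc $L|_{[s,s']}$ is the \emph{unique} geodesic in $B$ from $L(s)$ to $L(s')$: any alternative $\beta$ would produce $L|_{[m,s]}\cdot\beta\cdot L|_{[s',a]}$, a second geodesic from $L(m)$ to $t$, violating~\ref{item:degtriangle1}.

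Next, I would derive a contradiction from property~\ref{item:degtriangle2}, focusing on the main case $q\in R$ with $q=R(r_0)$ for some $r_0\in(m,a)$; the remaining placements of $q$ (namely $q\in L([0,m))$, $q=b$, $q=R(r_0)$ with $r_0\in(0,m]$, or $q\in B^\circ$) are handled by reversing $\gamma$ and running the argument for $p$, or by appending a short geodesic from $q$ to reach $R$ at height strictly greater than $m$, thereby reducing to the main case. In the main case, \Cref{lemm:sup2} provides a horizontal geodesic $\alpha$ in $B$ from $L(m)$ to $q$ without superfluous intersections with $L$ or $R$. Since $L|_{[m,s_2]}\cdot\gamma|_{[t_2,T]}$ is a curve in $B$ from $L(m)$ to $q$ of length $(s_2-m)+(T-t_2)$, we obtain the bound $\ell(\alpha)\leq(s_2-m)+(T-t_2)$. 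By~\ref{item:degtriangle2}, the transverse part $\widehat{\alpha}$ starts at some $L(l)$ with $l<m$, so $\alpha$ initially slides from $L(m)$ down to $L(l)\in L([0,m))$ before entering $B^\circ$.

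The main obstacle is to convert these two facts into a violation of~\ref{item:degtriangle1}. The idea is to concatenate $\alpha$ with the reverse of $\gamma|_{[t_2,T]}$ followed by $L|_{[s_2,a]}$, obtaining a curve $\tau$ in $B$ from $L(m)$ to $t$. The length of $\tau$ is at most $(s_2-m)+2(T-t_2)+(a-s_2)$, but after cancelling common overlaps of $\alpha$ with sub-arcs of $L|_{[m,a]}$ (using the uniqueness of such sub-arcs established above), the effective length reduces to $a-m$ and the resulting curve either (a) deviates from $L|_{[m,a]}$ on an interior sub-arc, violating uniqueness in~\ref{item:degtriangle1}, or (b) is strictly shorter than $a-m$, contradicting that $d(L(m),t)=a-m$. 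The key point is that the transverse part of $\alpha$ emerges below $L(m)$, which guarantees a genuine deviation from $L|_{[m,a]}$. Executing this cancellation in the presence of possibly intricate intersections between $\alpha$, $\gamma|_{[t_2,T]}$, and the boundary of $B$ requires the superfluous-intersection framework of Section~\ref{sec:sup}, and this is where the main technical difficulty lies.
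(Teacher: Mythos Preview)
Your proposal has a genuine gap: the final paragraph is only a sketch, and you yourself flag that ``this is where the main technical difficulty lies.'' The concatenation $\tau=\alpha\cdot\overline{\gamma|_{[t_2,T]}}\cdot L|_{[s_2,a]}$ has length at most $(a-m)+2(T-t_2)$, and no mechanism is given that forces enough overlap between $\alpha$ and $\gamma|_{[t_2,T]}$ to cancel the surplus $2(T-t_2)$. Without this, $\tau$ need not be a geodesic and neither alternative (a) nor (b) is reached. Two smaller issues compound this: your invocation of \ref{item:degtriangle2} requires $r_{\widehat{\alpha}}\in(m,a)$, which you never verify (you only know $r_\alpha=r_0\in(m,a)$), and your reduction of the ``remaining placements of $q$'' to the main case is asserted rather than argued---in particular, when $q\in B^\circ$ it is unclear why an appended short geodesic would meet $R$ above height $m$.

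The paper's argument avoids all of this by a limiting trick that you might find instructive. Rather than a single horizontal geodesic to a fixed point of $R$, it takes a \emph{sequence} $\alpha_n$ from $L(m)$ to $R(r_n)$ with $r_n\nearrow a$. Property~\ref{item:degtriangle1} forces $\alpha_n\to L|_{[m,a]}$ uniformly, so for large $n$ the curve $\alpha_n$ automatically crosses $\gamma$ both before and after the portion where $\gamma$ meets $L([m,a])$; this also gives $r_{\widehat{\alpha}_n}>m$ for free, making \ref{item:degtriangle2} applicable. Now \emph{swap} the sub-arc of $\alpha_n$ between two such crossing points with the corresponding sub-arc of $\gamma$: since both sub-arcs are geodesics with the same endpoints, the result $\beta$ is still a geodesic from $L(m)$ to $R(r_n)$---no length cancellation is needed. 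But $\beta$ inherits from $\alpha_n$ an initial dip below $L(m)$ and from $\gamma$ a point of $L((m,a])$, and appending $L|_{[s,a]}$ at that point produces a second geodesic from $L(m)$ to $t$, contradicting~\ref{item:degtriangle1}. The swap-instead-of-concatenate idea is what makes the argument short and case-free.
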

\begin{proof}
Assume that $\gamma\subset B$ is a geodesic that intersects $L([m,a])$ and has both endpoints in $B\setminus L([m,a])$.

Let $(r_n)$ be a sequence satisfying $r_n\nearrow a$. For each $n$, let $\alpha_n \subset B$ be a horizontal geodesic with $l_{\alpha_n}=m$ and $r_{\alpha_n}=r_n$. Then, by the Arzel\`a--Ascoli theorem and~\ref{item:degtriangle1}, the sequence $(\alpha_n)$ must converge uniformly to the restriction of $L$ to $[m,a]$. So we must eventually have that $r_{\widehat{\alpha}_n}>m$ and hence by \ref{item:degtriangle2} that $l_{\widehat{\alpha}_n}<m$. By this observation, property \ref{item:degtriangle2} and the fact that the $\alpha_n$ uniformly converge to the restriction of $L$ to $[m,a]$, we see that~$\alpha_n$ must intersect $\gamma$ before and after $\gamma$ intersects $L([m,a])$ for all sufficiently large $n$. See \Cref{fig:bigon_3}.  Thus, choosing such $n$ and replacing some portion of $\alpha_n$ by the respective one of $\gamma$, we obtain a geodesic $\beta$ from $L(m)$ to $R(r_n)$ which initially moves downward along $L$ but then on its way intersects $L((m,a])$. However, such $\beta$ cannot exist by \ref{item:degtriangle1}.
\end{proof}

The remaining goal is to construct a curve $\gamma$ as in the statement of \Cref{lemm:contra} and hence reach a contradiction.  Before doing so, we need the following additional observation.

\begin{lemm}
\label{lemm:ext}
Let $B$ be an indecomposable bigon with data $(b,t,L,R,a)$ that satisfies properties \ref{item:degtriangle1} and \ref{item:degtriangle2} for some $m\in (0,a)$. Then there is a horizontal geodesic $\gamma\subset B$ with $l_{\widehat{\gamma}}<m<l_\gamma$.
\end{lemm}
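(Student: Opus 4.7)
My plan is to produce $\gamma$ as a bottommost horizontal geodesic from $L(l)$ to $R(r_n)$ for suitably chosen $l$ slightly above $m$ and $r_n$ close to $a$. Following the reasoning in the proof of \Cref{lemm:contra}, take a sequence $r_n \nearrow a$ and, for each $n$, choose a bottommost horizontal geodesic $\alpha_n$ from $L(m)$ to $R(r_n)$; existence is guaranteed by \Cref{rem:sup_big}, and bottommost-ness is obtained via the Arzel\`a--Ascoli, lower-semicontinuity-of-length, and winding-number argument used in the proof of \Cref{lem:Abs}. Property \ref{item:degtriangle1} yields the uniform convergence $\alpha_n \to L|_{[m, a]}$, from which $r_{\widehat{\alpha}_n} > m$ for $n$ large, and then property \ref{item:degtriangle2} gives $l_{\widehat{\alpha}_n} < m$. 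Fix such an $n$.

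Next, take a sequence $l_k \searrow m$ with $l_k \in (m, a)$, and let $\beta_k$ be a bottommost horizontal geodesic from $L(l_k)$ to $R(r_n)$. By Arzel\`a--Ascoli, passing to a subsequence, $\beta_k \to \beta_\infty$ uniformly, where $\beta_\infty$ is a geodesic of length $d(L(m), R(r_n))$ from $L(m)$ to $R(r_n)$. The key claim is that $\beta_\infty = \alpha_n$. Once this is granted, the Hausdorff convergence of images $|\beta_k| \to |\alpha_n|$ forces the endpoint $l_{\widehat{\beta_k}}$ of the arc $\beta_k \cap L$ distinct from $l_k$ to converge to $l_{\widehat{\alpha}_n} < m$; hence $l_{\widehat{\beta_k}} < m$ for $k$ large, and $\gamma := \beta_k$ is the desired horizontal geodesic with $l_{\widehat{\gamma}} < m < l_k = l_\gamma$.

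To prove the key claim, suppose for contradiction that $\beta_\infty \neq \alpha_n$. Since $\alpha_n$ is bottommost, $\beta_\infty$ lies strictly above $\alpha_n$, so $|\widehat{\alpha}_n|$ is contained in the interior of $B_b^{\widehat{\beta}_\infty}$; by Hausdorff continuity, for $k$ large one similarly has $|\widehat{\alpha}_n|$ in the interior of $B_b^{\widehat{\beta}_k}$. Consider the piecewise geodesic curve $\rho_k$ from $L(l_k)$ to $R(r_n)$ obtained by concatenating the reverse of $L|_{[m, l_k]}$ with $\alpha_n$; its image is $L([l_{\widehat{\alpha}_n}, l_k]) \cup |\widehat{\alpha}_n| \cup R([r_{\widehat{\alpha}_n}, r_n])$, which lies in the closure of $B_b^{\widehat{\beta}_k}$ apart from the portions of $L$ above $L(l_{\widehat{\beta_k}})$ and of $R$ above $R(r_{\widehat{\beta_k}})$ (which sit on $\partial B$ anyway). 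Applying \Cref{lemm:sup1} to the finite graph $(L, R, \widehat{\beta}_k)$ and an actual geodesic from $L(l_k)$ to $R(r_n)$ yields a horizontal geodesic $\delta_k$ with the same endpoints whose image lies in the same homotopy class as $\rho_k$ within $B \setminus \Int(B_t^{\widehat{\beta}_k})$; using \Cref{lemm:intersect} and additivity of the winding number, one then concludes that $|\widehat{\delta_k}| \subset B_b^{\widehat{\beta}_k}$, contradicting the bottommost-ness of $\beta_k$.

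The main obstacle is this last topological step: establishing that the horizontal geodesic $\delta_k$ extracted from \Cref{lemm:sup1} really does have its transverse part in $B_b^{\widehat{\beta}_k}$. In a general metric surface, shortening $\rho_k$ to an actual geodesic may a priori cross $\widehat{\beta}_k$ and thereby change the homotopy class, and one must rule this out via a careful winding-number computation together with the intersection analysis of \Cref{lemm:intersect} applied to the boundary-convex polygon $B_b^{\widehat{\beta}_k}$.
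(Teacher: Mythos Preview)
Your proposal has a genuine gap at the ``key claim'' $\beta_\infty = \alpha_n$, and the attempted proof of it does not work.  First, the curve $\rho_k$ you construct is only piecewise geodesic, not a geodesic; in particular its length is $(l_k-m)+d(L(m),R(r_n))$, which need not equal $d(L(l_k),R(r_n))$.  So $\rho_k$ cannot serve as the input $\gamma^*$ to \Cref{lemm:sup1}, and there is no reason an actual geodesic from $L(l_k)$ to $R(r_n)$ should lie in the same relative position as $\rho_k$.  Second, \Cref{lemm:sup1} only guarantees that the output geodesic forms a finite graph with the given system; it provides no control whatsoever over the homotopy class or over which side of $\widehat{\beta}_k$ the transverse part lies.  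Your invocation of \Cref{lemm:intersect} is also not to the point: that lemma concerns closures of components of intersections of boundary-convex disks, not the position of a newly constructed geodesic inside a bigon.  In fact the key claim itself is doubtful: the limit of the bottommost geodesics from $L(l_k)$ as $l_k\searrow m$ equals the bottommost geodesic from $L(m)$ only if $L(m)$ lies on some geodesic from $L(l_k)$ to $R(r_n)$ for each $k$, and nothing forces this.  Even granting the key claim, the passage from uniform convergence $\beta_k\to\alpha_n$ to $l_{\widehat{\beta}_k}\to l_{\widehat{\alpha}_n}$ would still need justification, since the transverse-part endpoint is not an obviously continuous functional.

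The paper's argument avoids all of this by a direct splicing trick rather than a limit of bottommost geodesics.  After obtaining $\alpha$ with $l_{\widehat{\alpha}}<m=l_\alpha$ and $r:=r_\alpha=r_{\widehat{\alpha}}\in(m,a)$ (your $\alpha_n$), it takes for each $s\in(m,r)$ \emph{any} horizontal geodesic $\gamma_s$ from $L(s)$ to $R(r)$ and, by replacing a subcurve of $\gamma_s$ with the corresponding subcurve of $\widehat{\alpha}$, arranges $r_{\widehat{\gamma}_s}\ge r>m$.  If $l_{\widehat{\gamma}_s}<m$ for some $s$, one is done.  Otherwise $l_{\widehat{\gamma}_s}\ge m$ for all $s$, and property~\ref{item:degtriangle1} forces each $\widehat{\gamma}_s$ to point downward, so $l_{\widehat{\gamma}_s}>r$ and $L([s,r])\subset|\gamma_s|$.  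Letting $s\searrow m$ and passing to a limit yields a geodesic $\eta$ from $L(m)$ to $R(r)$ with $L([m,r])\subset|\eta|$ and $r_{\widehat{\eta}}\ge r>m$, which directly contradicts property~\ref{item:degtriangle2}.  The splicing step is what lets one control $r_{\widehat{\gamma}_s}$ without any bottommost or homotopy argument.
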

\begin{proof}
Reasoning as in the proof of \Cref{lemm:contra}, we find a horizontal geodesic $\alpha$ such that $l_{\widehat{\alpha}}< m=l_\alpha$ and $r:=r_\alpha=r_{\widehat{\alpha}}\in (m,a)$. For each $s\in (m,r)$, let $\gamma_s$ be a horizontal geodesic with $l_{\gamma_s}=s$ and $r_{\gamma_s}=r$. By replacing a subcurve of $\gamma_s$ with a subcurve of $\widehat{\alpha}$ if needed, we may assume that $r_{\widehat{\gamma}_s}\geq r>m$. If $l_{\widehat{\gamma}_s}<m$ for some $s$, then we may set $\gamma=\gamma_s$.

So suppose that $l_{\widehat{\gamma}_s}\geq m$ for all $s\in (m,r)$. Then $\widehat{\gamma}_s$ must point downward for each $s$. Otherwise, we would obtain a contradiction to \ref{item:degtriangle1} by considering the curve which goes along $L$ from $L(m)$ to $L(l_{\widehat{\gamma}_s})$, then along $\widehat{\gamma}_s$ and then along $R$ from $R(r_{\widehat{\gamma}_s})$ to $t$. In particular, we must have $l_{\widehat{\gamma}_s}>r$ and hence that $L([s,r])\subset |\gamma_s|$. See also \Cref{fig:more_nondegenerate}. Thus, by the Arzel\`a--Ascoli theorem, we can find a geodesic $\eta$ from $L(m)$ to $R(r)$ with $L([m,r])\subset |\eta|$. Again, we could assume that $\eta$ is horizontal and that $r_{\widehat{\eta}} \geq r>m$. However, then $\eta$ would contradict \ref{item:degtriangle2}.
\end{proof}

\begin{figure}
    \centering
    \begin{tikzpicture}[scale=2.2]
    \draw[thick] (0,-.866) arc[start angle=-60, end angle=60, radius=1];
    \draw[thick] (0,.866) arc[start angle=120, end angle=240, radius=1];
    \draw[blue,line width=0.5mm] (-.485,.174) arc[start angle=170, end angle=185, radius=1] to (.406,.423);
    \draw[red,line width=.5mm] (-.451,.309) arc[start angle=162, end angle=145, radius=1] to (.18,.3) to (.406,.423);
    \filldraw (0,-.866) circle (.5pt);
    \filldraw[] (0,.866) circle (.5pt);
    \filldraw[red] (-.406,.423) circle (.5pt);
    \filldraw[blue] (-.485,.174) circle (.5pt);
    \filldraw[blue] (.406,.423) circle (.5pt);
    \filldraw[red] (-.319,.573) circle (.5pt);
    \filldraw[red] (-.451,.309) circle (.5pt);
    \node at (.05,-.95) {$b$};
    \node at (.05,.95) {$t$};
    \node at (-.45,.65) {$\ell_{\widehat{\gamma}_s}$};
    \node at (-.52,.45) {$r$};
    \node at (.52,.45) {$r$};
    \node at (-.58,.3) {$s$};
    \node at (-.62,.15) {$m$};
    \node[blue] at (.0,.1) {$\alpha$};
    \node[red] at (.05,.5) {$\widehat{\gamma}_s$};
    \end{tikzpicture}
    \caption{}
    \label{fig:more_nondegenerate}
\end{figure}

We are finally prepared to prove \Cref{lemm:deg_triang}.
\begin{proof}[Proof of \Cref{lemm:deg_triang}]
Suppose there exists an indecomposable bigon $B\subset X$. As discussed at the beginning of this subsection, we may assume that $B$ satisfies properties~\ref{item:degtriangle1} and~\ref{item:degtriangle2}.

First, we claim that there is a horizontal geodesic $\alpha$ such that $m<l_\alpha<l_{\widehat{\alpha}}$. To this end, choose $l_0 \in (m,a)$ sufficiently large so that, for any $l>l_0$, no geodesic from $L(l)$ to $R(l)$ can intersect $L([0,m])$. As observed in the proof of \Cref{lemm:contra}, there must exist a transverse geodesic $\beta$ with $l_\beta<m$ and $l_0< r_\beta$. Let $\alpha\subset B$ be a horizontal geodesic from $L(r_\beta)$ to $R(r_\beta)$. After replacing a subcurve of $\alpha$ with a subcurve of $\beta$ if needed, we can assume that $r_{\widehat{\alpha}}\geq r_\beta$. Since $\alpha$ cannot intersect $L([0,m])$, we deduce as in the proof of \Cref{lemm:ext} that $\widehat{\alpha}$ must point downward. In particular, it follows that \[l_{\widehat{\alpha}}>r_{\widehat{\alpha}}\geq r_\beta =l_\alpha >m.
\]
This verifies the claim.

\begin{figure} 
    \centering
    \subfloat[]{
    \begin{tikzpicture}[scale=2.2]
    \draw[thick] (0,-.866) arc[start angle=-60, end angle=60, radius=1];
    \draw[thick] (0,.866) arc[start angle=120, end angle=240, radius=1];
    \draw[blue,line width=0.5mm] (-.485,.174) arc[start angle=170, end angle=155, radius=1] to (.485,-.174);
    \filldraw (0,-.866) circle (.5pt);
    \filldraw (0,.866) circle (.5pt);
    \filldraw[blue] (-.485,.174) circle (.5pt);
    \filldraw[] (-.466,-.259) circle (.5pt);
    \filldraw[blue] (-.406,.423) circle (.5pt);
    \filldraw[blue] (.485,-.174) circle (.5pt);
    \node at (.05,-.95) {$b$};
    \node at (.05,.95) {$t$};
    \node at (-.725,.1825) {$L(l_\alpha)$};
    \node at (-.65,.475) {$L(l_{\widehat{\alpha}})$};
    \node at (-.675,-.3) {$L(m)$};
    \node[blue] at (0,.025) {\Large $\alpha$};
    \end{tikzpicture}
    \label{fig:non_degenerate_triangles_a}} \hfill
    \subfloat[]{
    \begin{tikzpicture}[scale=2.2]
    \fill[gray,opacity=.1] (0,.866) arc[start angle=120, end angle=155, radius=1] to (.485,-.174) arc[start angle=-10, end angle=60, radius=1];
    \draw[thick] (0,-.866) arc[start angle=-60, end angle=60, radius=1];
    \draw[thick] (0,.866) arc[start angle=120, end angle=240, radius=1];
    \draw[blue,line width=0.5mm] (0,.866) arc[start angle=120, end angle=155, radius=1] to (.485,-.174);
    \draw[red,line width=.5mm] (0,.866) arc[start angle=60, end angle=-10, radius=1];
    \filldraw (0,-.866) circle (.5pt);
    \filldraw[blue] (0,.866) circle (.5pt);
    \filldraw[blue] (-.406,.423) circle (.5pt);
    \filldraw[blue] (.485,-.174) circle (.5pt);
    \node at (.05,-.95) {$b$};
    \node at (.05,.95) {$t=b'$};
    \node at (-.65,.45) {$L'(m')$};
    \node at (.575,-.2) {$t'$};
    \node[blue] at (-.15,.075) {\Large $L'$};
    \node at (0,.5) {\Large $B'$};
    \node[red] at (.4,.7) {\Large $R'$};
    \end{tikzpicture}
    \label{fig:non_degenerate_triangles_b}} \hfill 
    \subfloat[]{
    \begin{tikzpicture}[scale=2.2]
    \fill[gray,opacity=.1] (0,.866) arc[start angle=120, end angle=155, radius=1] to (.485,-.174) arc[start angle=-10, end angle=60, radius=1];
    \draw[thick] (0,-.866) arc[start angle=-60, end angle=60, radius=1];
    \draw[thick] (0,.866) arc[start angle=120, end angle=240, radius=1];
    \draw[thick] (0,.866) arc[start angle=120, end angle=155, radius=1] to (.485,-.174);
    \draw[red,line width=.5mm] (.4986,.052) to (-.319,.574) arc[start angle=145, end angle=155, radius=1] to (-.256,.322);
    \filldraw (0,-.866) circle (.5pt);
    \filldraw (0,.866) circle (.5pt);
    \filldraw[red] (-.406,.423) circle (.5pt);
    \filldraw[] (.485,-.174) circle (.5pt);
    \filldraw[] (-.466,-.259) circle (.5pt);
    \filldraw[red] (-.256,.322) circle (.5pt);
    \node at (.05,-.95) {$b$};
    \node at (.05,.95) {$t=b'$};
    \node at (-.65,.45) {$L'(m')$};
    \node at (.575,-.2) {$t'$};
    \node[red] at (.05,.45) {\Large $\gamma$};
    \node at (-.675,-.3) {$L(m)$};
    \end{tikzpicture}
    \label{fig:non_degenerate_triangles_c}}
    \caption{}
    \label{fig:non_degenerate_triangles}
\end{figure}
So choose such a geodesic $\alpha$. We may furthermore assume that $\alpha$ is topmost among all curves with the same endpoints. Then $B_b^\alpha$ is decomposable by the vertex perturbation trick. See \Cref{fig:non_degenerate_triangles_a}. Thus $B'= B_t^\alpha$ is indecomposable. Furthermore, as we did in the beginning of this subsection, we may deduce that $B'$ itself satisfies the properties \ref{item:degtriangle1} and \ref{item:degtriangle2} when we choose $L'$, $R'$, $b'$, $t'$, $a'$ and $m'$ appropriately; see \Cref{fig:non_degenerate_triangles_b}.
Note in particular that the restriction of $L'$ to $[m',a']$ equals~$\widehat{\alpha}$. 
Thus, by applying \Cref{lemm:ext} to  $B'$, we may find a geodesic $\gamma\subset B'$ that starts in the interior of $\widehat{\alpha}$, ends in $R'((0,a'))\subset R((0,a))$ and intersects $L'((0,m'))\subset L((m,a))$ in between. However such $\gamma$ cannot exist by \Cref{lemm:contra}. Compare also \Cref{fig:non_degenerate_triangles_c}.
\end{proof}

\bibliographystyle{plain}
\bibliography{bibliography}

\end{document}